\providecommand{\U}[1]{\protect\rule{.1in}{.1in}}
\newtheorem{theorem}{Theorem}[section]
\newtheorem{proposition}[theorem]{Proposition}
\newtheorem{corollary}[theorem]{Corollary}
\newtheorem{definition}[theorem]{Definition}
\newtheorem{remark}[theorem]{Remark}
\newtheorem{lemma}[theorem]{Lemma}
\newtheorem{example}[theorem]{Example}
\def\eps{\varepsilon}
\def\]{{\Big]}}
\def\[{{\Big[}}
\def\bd{\begin{definition}}
\def\ed{\end{definition}}
\def\bp{\begin{proposition}}
\def\ep{\end{proposition}}
\def\bc{\begin{corollary}}
\def\ec{\end{corollary}}
\def\bx{\begin{Examples}}
\def\ex{\end{Examples}}
\def\cE{{\mathcal E}}
\def\cL{{\mathcal L}}
\def\mZ{{\mathbb Z}}
\def\ba{{\begin{align}}
\def\ea{\end{align}}}
\def\geq{\geqslant}
\def\leq{\leqslant}
\def\^{\widehat}
\def\E{\mathbb E}
\def\ba{\begin{aligned}}
\def\ea{\end{aligned}}
\def\be{\begin{equation}}
\def\ee{\end{equation}}
\def\ben{\begin{align*}}
\def\enn{\end{align*}}
\def\E{\mathcal{E}}
\newcommand{\mdef}[1]{\textit{{#1}}}
\newcommand{\vare}{\varepsilon}
\def\dsum{\displaystyle\sum}
\newcommand{\Rmnum}[1]{\expandafter\@slowromancap\romannumeral #1@}
\numberwithin{equation}{section}
\title{\bf{Euler-genus distributions of cubic caterpillar-Halin graphs}}
\author{ Jinlian Zhang$^\dag$  \\
{\em\small department of Mathematics, Hunan University, 410082 Changsha, China}\\
{\ Xuhui Peng$^\spadesuit$} \\
{\em\small department  of  statistics,  Hunan Normal University, 410081 Changsha, China }\\
 }
 \date{}
\begin{document}

\maketitle
\let\thefootnote\relax\footnotetext{$^\dag$ Email: jinlian916@hnu.edu.cn.;\quad \quad $^\spadesuit$ Email:  xhpeng@hunnu.edu.cn}

\begin{abstract}
\noindent
 Gross \cite{Gro11} derived an $O(n^2)$-time algorithm to  calculate  the genus distribution of a given cubic Halin graph.
 In this  paper, with the help of overlap matrix,  we get
   a   recurrence   relation  for the     Euler-genus polynomial of  cubic caterpillar-Halin graphs.
   Explicit formulas  for the
   embeddings of   cubic caterpillar-Halin graph  into a surface with Euler-genus 0, 1 and 2 are  also obtained.

\vskip0.5cm\noindent{\bf Keywords:}  Embedding distributions; Euler-genus polynomial; Cubic Halin graph
\vspace{1mm}\\
\noindent{{\bf MSC 2000:}  Primary:05C10; Secondary:05A15; 05C30.}
\end{abstract}
\section{\Large Introduction}


There are two kinds of  surfaces  in topology. One is the \mdef{orientable surfaces} $O_j$, which has  $j$ handles $(j\ge0)$ and the other is the \mdef{non-orientable surfaces} $N_k$, which has  $k$ crosscaps $(k>0)$.
The research of  graph embeddings on non-orientable surfaces was  began   by  Chen, Gross and Rieper \cite{CGR94}.
The authors in  \cite{CG18} introduced   a combinatorial way   to calculate   the Euler-genus polynomial of a graph.
Recently,  an $O(n^2)$-time algorithm to  calculate  the genus distribution of any  cubic Halin graph $H_T$ was derived  by  Gross \cite{Gro11}.
 For  linear graph families, 
  Stahl \cite{Sta91} calculated their  genus distributions.
 Enami \cite{Ena18} made researches on the  inequivalent embeddings for 3-connected 3-regular planar graph. 
  However, there  still  no results on  recursive formulas for  Euler-genus distributions of any cubic Halin graph.
 In this paper, we make researches  on cubic caterpillar-Halin graphs and obtain    their   recurrence   relation for  Euler-genus polynomial.

\subsection{Euler-genus polynomial}

The Euler-genus $\gamma^E$ of a surface $S$ is
\begin{eqnarray*}
  \gamma^E=
  \left\{
  \begin{split}
   &  2j, \quad \text{if }  S=O_j,
    \\
    &  k,\quad  \text{ if }  S=N_k.
  \end{split}
  \right.
\end{eqnarray*}
In this paper, $S_i$ denotes  a  surface $S$ with Euler-genus $i$.

 A \emph{pure rotation system} $\rho$ of graph  $G=(V(G),E(G))$ is the set $\{\rho_v: v\in V(G)\},$
where   $\rho_v$  is  a cyclic permutation of edges incident to $v$.
Assume  $\lambda$ is a mapping $E(G)\rightarrow\{0,1\}$.
We say   edge  $e$ is  \mdef{untwisted}  if $\lambda(e)=0$,  and we say it   is  \mdef{twisted} if $\lambda(e)=1$.
 This   pair $(\rho,\lambda)$ is called a
\mdef{general rotation system} for graph  $G$.
Let $T$ be a spanning tree  of graph $G$.
If $\lambda(e) = 0,\forall e\in E(T),$ this general rotation system $(\rho,\lambda)$  is called
 a $T$-rotation system.
For any two embeddings of $G$, if their $T$-rotation systems are combinatorially
equivalent, we consider them  to be \textit{equivalent}.
Let  $\vare_G(i)$ be  the number of \textit{equivalent embeddings} of $G$ into the surface $S_i,$ for $i\geq 0.$
 The \textit{Euler-genus distribution}  of graph $G$ is the sequence
 $
 \vare_G(0),\vare_G(1),\cdots,\vare_G(i),\cdots
 $
 and the \textit{Euler-genus polynomial} of graph $G$ is $\E_G(z)=\dsum_{i=0}^\infty \vare_G(i)z^i.$

\subsection{Cubic  caterpillar-Halin graphs}

A \textit{cubic Halin graph} $H_T=T\cup C$, where  $T$ is a plane tree with 3 degree  interior vertex
and  $C$ is a cycle which  connects  the leaves
(vertices of degree 1) of $T$ so that $C$ is the boundary of an  \mdef{infinite face.}
Except this \mdef{infinite face}, the  other faces  are  called  as \mdef{interior faces}.
This  plane tree $T$   is called the characteristic tree of  graph $H_T$.
 A tree $T$,   whose removal of leaves results in a path $P$,  is called a \textit{caterpillar}
 and this path $P$ is  called the \mdef{spine} of the  caterpillar $T$.
 When  the characteristic tree of $H_T$ is a caterpillar,  we call such  graph $H_T$  \textit{cubic  caterpillar-Halin graph}.

Let $T$ be a caterpillar with spine  $P : v_1, v_2, \ldots, v_
 \ell$.   We  consider the   cubic  caterpillar-Halin graph  $H_{T}.$
 In the spine  $P,$  each vertex $v_i$ is adjacent to a leaf $u_i$ for $1 \leq i \leq \ell $ and  $v_1$ (resp. $v_\ell$ ) adjacent to one more leaf $u_0 = v_0$ (resp. $u_{\ell+1} = v_{\ell+1}$).
 We put  the path $v_0Pv_{\ell+1}$ horizontally in the middle, and then pend these  edges  $v_iu_i,$ $1 \leq i \leq \ell$ by either up or down edges vertically to $P.$ The graph in  Figure \ref{tu1} is  a 	concrete  example  for  cubic  caterpillar-Halin graph.
\begin{figure}[h]
  \centering
  \includegraphics[width=0.82\textwidth]{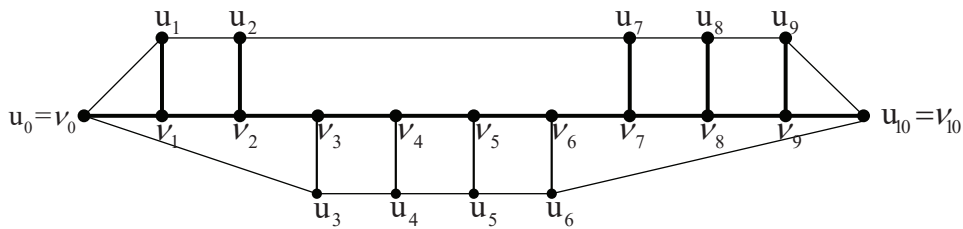}
  \caption{A cubic caterpillar-Halin graph: $H_{2,3,3}$}
  \label{tu1}
\end{figure}

Counting   from the leftmost to the rightmost on $P,$ if  the numbers of maximum consecutive up or down edges
of  the caterpillar-Halin graph $H_T$   are  $m_1,m_2+1,\cdots,m_{k-1}+1,m_k$,  we denote   this graph $H_T$ by  $H_{m_1,m_2,\ldots,m_k}$.
 In order to have a more simple form of the overlap matrix for
 graph $H_{m_1,m_2,\ldots,m_k}$, here  we  don't use   $m_1,m_2,\cdots,m_{k-1},m_k$ to denote
the numbers  of maximum consecutive up or down edges.
For example, the graph in Figure \ref{tu1} is $H_{2,3,3}$.
Figure \ref{1-11}  demonstrates the general case $H_{m_1,m_2,\ldots,m_k}.$
\begin{figure}[h]
  \centering
  \includegraphics[width=1.06\textwidth]{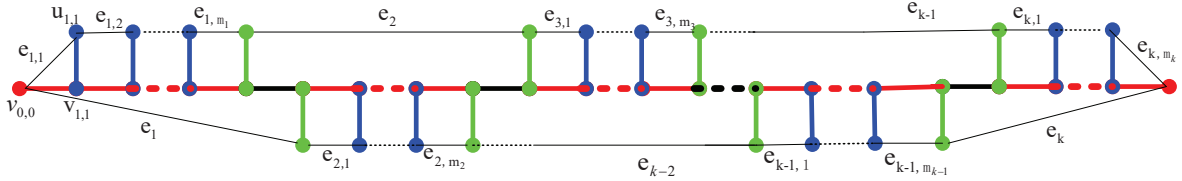}
  \caption{Graph $H_{m_1,\cdots,m_k}$}
  \label{1-11}
\end{figure}

For convenience,  our choice of a spanning tree $T$ for
$H_{m_1,\cdots,m_k}$ is indicated by    \textit{{thicker lines}}.
Then, as shown in Figure \ref{1-11},  the co-tree edges are
$$e_{1},\cdots e_{k},e_{1,1},\cdots,e_{1,m_{1}},e_{2,1},
\cdots,e_{2,m_{2}},\cdots,e_{k-1,1},\cdots,e_{k-1,m_{k-1}}
e_{k,1},\cdots,e_{k,m_k}.$$
If we delete the edges $\{e_1,\cdots,e_k\}$ in graph $H_{m_1,\cdots,m_k}$,
then  $m_1,\cdots,m_k$  are   the numbers of maximum consecutive co-tree  edges actually.

Now, we consider a special case of  $H_{m_1,m_2,\ldots,m_k}.$   When $k=1$ and $m_1=m\geq 1 $, this graph  $H_{m}$ is usually called   a   Ringel ladder graph    denoted by $R_{m}.$
The explicit formula of the embedding distributions for $R_m$ was obtained by  Chen et al. \cite{COZ11}.
 Figure \ref{fig:G4} demonstrates  the  graphs  $R_4$ and  $H_{2,2}$.

In this paper, for any  positive integer $m_1,$  the graph $H_{m_1,0}$ is defined to be  $H_{m_1}$ or $R_{m_1}$.  For example, the graph $R_4$ in Figure  \ref{fig:G4} is  also $H_{4,0}$.
\begin{figure}[h]
  \centering
  \includegraphics[width=0.86\textwidth]{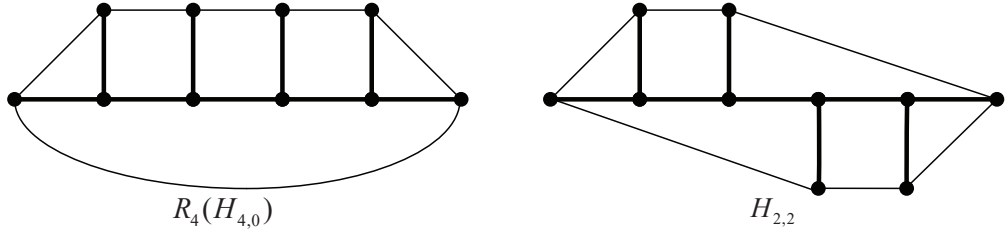}
  \caption{Graphs $R_4$ and $H_{2,2}$}
  \label{fig:G4}
\end{figure}
For any  $k\geq 2$  and  positive integers $m_1,\cdots,m_k$,
the graph  $H_{m_1,\cdots,m_k,0}$ is  defined to be the following graph in Figure \ref{fig-p1}.
\begin{figure}[h]
  \centering
  \includegraphics[width=1.08\textwidth]{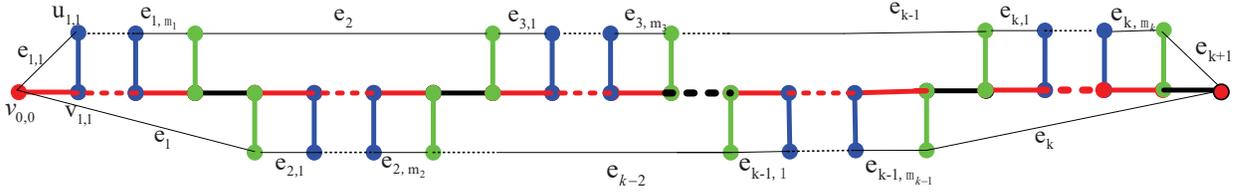}
  \caption{Graph $H_{m_1,\cdots,m_k,0}$}
  \label{fig-p1}
\end{figure}
Actually, the graph $H_{m_1,\cdots,m_k,0}$ can be  interpreted as the graph $H_{m_1,\cdots,m_{k},m_{k+1}}$ for the special case  of $m_{k+1}=0.$

\newpage

By the definitions above, we  observe the following propositions.

\begin{proposition}\label{2-11}
For  any  positive  integers $m_1,\cdots,m_k$,   $H_{m_1,m_2,\ldots,m_k} \cong H_{m_k,m_{k-1},\ldots,m_1}$
\end{proposition}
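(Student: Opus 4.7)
The plan is to construct an explicit graph isomorphism $\phi: H_{m_1,\ldots,m_k} \to H_{m_k,\ldots,m_1}$ realized by a left–right reflection of the caterpillar-Halin diagram. Setting $\ell = m_1 + m_k + \sum_{i=2}^{k-1}(m_i+1)$ for the spine length, I would define $\phi$ on the spine by $v_i \mapsto v_{\ell+1-i}$ for $0 \le i \le \ell+1$ and on the pendant leaves by $u_j \mapsto u_{\ell+1-j}$ for $0 \le j \le \ell+1$.

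I would then verify edge by edge that $\phi$ sends edges to edges: each spine edge $v_i v_{i+1}$ goes to the spine edge $v_{\ell-i} v_{\ell+1-i}$; each pendant edge $v_i u_i$ goes to the pendant edge $v_{\ell+1-i} u_{\ell+1-i}$; and the outer cycle $C = u_0 u_1 \cdots u_{\ell+1} u_0$ bounding the infinite face is mapped to itself traversed in the reverse cyclic order. Hence $\phi$ is an isomorphism of the underlying abstract graphs. It then remains to check that the image graph, when re-read from left to right on the reflected spine, exhibits the pendant-run sequence $(m_k,\, m_{k-1}+1,\, \ldots,\, m_2+1,\, m_1)$, which by the definition in the previous subsection is exactly $H_{m_k,\ldots,m_1}$.

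The only step requiring real care is the bookkeeping for the maximal consecutive up/down blocks: the two endpoint blocks contribute lengths $m_1$ and $m_k$, while the interior blocks contribute $m_i+1$ for $2 \le i \le k-1$, and one must confirm that reflection interchanges the two endpoints and reverses the order of interior blocks while preserving all their lengths. This is routine once one notes that the notation $H_{m_1,\ldots,m_k}$ is insensitive to whether the leftmost pendant points up or down — a vertical flip produces the same labeled graph — so the horizontal reflection we use is unambiguous, and no deeper obstacle is anticipated.
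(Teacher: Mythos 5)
The paper itself offers no argument for this proposition (it is stated as an observation ``by the definitions above,'' supported only by the figures), so your reflection construction is precisely the justification the paper leaves implicit, and the overall plan --- reflect the diagram, check it is a graph isomorphism, and check that the run-length sequence read left to right becomes $(m_k,\,m_{k-1}+1,\ldots,m_2+1,\,m_1)$ --- is the right one. Your run-length bookkeeping, including $\ell=m_1+m_k+\sum_{i=2}^{k-1}(m_i+1)$ and the remark that a vertical flip leaves the graph unchanged (needed because for even $k$ the reflected first run lies on the opposite side of the spine), is correct.

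There is, however, a concrete error in the edge-by-edge verification: the outer cycle of a caterpillar-Halin graph is \emph{not} $u_0u_1\cdots u_{\ell+1}u_0$ in index order. Since the cycle $C$ is the boundary of the infinite face of the plane caterpillar, it runs from $u_0=v_0$ along the leaves pendant on one side of the spine (in left-to-right order) to $u_{\ell+1}=v_{\ell+1}$, and returns along the leaves on the other side in right-to-left order. For instance, in $H_{1,1}$ (spine $v_1v_2$, with $u_1$ up and $u_2$ down) the cycle is $v_0u_1v_3u_2v_0$, and $u_1u_2$ is not an edge; whenever $k\geq 2$ both sides are occupied, so your description of $C$ fails and the cycle-edge check, as written, verifies the wrong edge set. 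The repair is routine but must be made: describe $C$ correctly as the two lateral leaf-paths joined at $u_0$ and $u_{\ell+1}$, and observe that the horizontal reflection sends consecutive same-side leaves to consecutive same-side leaves, interchanges $u_0$ and $u_{\ell+1}$, and (after the vertical flip when $k$ is even) matches the up/down pattern of $H_{m_k,\ldots,m_1}$; also note that $\phi$ carries $C$ onto the cycle of the \emph{target} graph rather than ``to itself,'' since the two graphs are distinct labelled objects. With these corrections your argument is complete.
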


\begin{proposition}
\label{x-13}
 For any $k\geq 2$  and   positive  integers $m_1,\cdots,m_k$, we have $H_{m_1 ,m_2 ,\cdots,m_{k-1}, m_k ,0}
\cong H_{m_1 ,m_2,\cdots,m_{k-1}, m_k+1}$ and $H_{m_1,0}
\cong R_{m_1}.$
\end{proposition}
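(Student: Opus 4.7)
The plan is to verify both isomorphisms directly from the labeling convention for cubic caterpillar-Halin graphs stated at the beginning of Section 1.2. The key fact to keep in view is that, by convention, $H_{n_1,\ldots,n_r}$ denotes the graph whose ordered sequence of maximum consecutive up or down edge runs along the spine is $n_1,\,n_2+1,\,\cdots,\,n_{r-1}+1,\,n_r$.

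First I would handle the main claim $H_{m_1,\ldots,m_k,0} \cong H_{m_1,\ldots,m_{k-1},m_k+1}$ for $k\geq 2$. Interpreting $H_{m_1,\ldots,m_k,0}$ as the graph defined by Figure~\ref{fig-p1} (equivalently, as $H_{m_1,\ldots,m_k,m_{k+1}}$ specialised to $m_{k+1}=0$) and applying the convention above, its run lengths are $m_1,\,m_2+1,\,\cdots,\,m_{k-1}+1,\,m_k+1,\,0$. The trailing $0$ means the would-be final run contributes no edges, so the graph in fact carries exactly $k$ runs of lengths $m_1,\,m_2+1,\,\cdots,\,m_{k-1}+1,\,m_k+1$. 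Applying the convention to $H_{m_1,\ldots,m_{k-1},m_k+1}$ directly yields the same ordered sequence of run lengths.

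Since a cubic caterpillar-Halin graph is determined (up to isomorphism) by the ordered sequence of up/down run lengths along its spine, an explicit isomorphism can be produced by matching spine vertices $v_i$, pendant leaves $u_i$, and cycle edges in order. A visual comparison of Figures~\ref{1-11} and~\ref{fig-p1} confirms that the graph specified by Figure~\ref{fig-p1} coincides with the graph $H_{m_1,\ldots,m_{k-1},m_k+1}$ depicted in Figure~\ref{1-11}.

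For the second assertion $H_{m_1,0}\cong R_{m_1}$, no work is required: the sentence immediately preceding the proposition defines $H_{m_1,0}$ to be $R_{m_1}$. The only real obstacle in this proposition is notational bookkeeping, namely reconciling the \textbf{+1} offset carried by interior runs but not by the two endpoint runs with the index shift from $m_k$ to $m_k+1$ that occurs when the trailing $0$ is absorbed; once this convention is tracked carefully, no deeper argument is needed.
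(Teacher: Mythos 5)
Your argument is correct and matches the paper's treatment: the paper offers no formal proof, presenting the proposition as an observation from the definitions (illustrated by the graphs $H_{2,2,4}$ and $H_{2,2,3,0}$ in Figure \ref{fig:H10}), and your run-length bookkeeping, absorbing the trailing $0$ into the final run of length $m_k+1$ and comparing Figures \ref{1-11} and \ref{fig-p1}, is exactly that verification spelled out. The second claim $H_{m_1,0}\cong R_{m_1}$ is indeed true by definition, as you note.
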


\begin{proposition}\label{2-12}
For any  $k\geq 2$ and positive   integers $m_1,\cdots,m_k$,  we have $H_{m_1,m_2,\ldots,m_k,1} \cong H_{m_1,m_2,\ldots,m_{k-1},m_k+2}$
and  $H_{m_1,1}
\cong R_{m_1+1}.$
\end{proposition}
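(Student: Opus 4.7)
The plan is to construct an explicit graph isomorphism given by a single transposition of two leaves at the right end of the caterpillar; the same swap will handle both claims in parallel.

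First I would check that, in each of the two claims, the two graphs have matching spine lengths so that their vertex sets may be identified with a common set $V = \{v_1,\ldots,v_\ell\}\cup\{u_0,u_1,\ldots,u_{\ell+1}\}$. For the first claim a direct summation of block sizes gives $\ell = m_1 + \cdots + m_k + k$ in both graphs; for the second claim $\ell = m_1+1$ in both graphs. Under such identifications the spine edges $v_iv_{i+1}$, the pendant edges $v_iu_i$, and the two extra-leaf edges $v_1u_0$ and $v_\ell u_{\ell+1}$ are automatically common to both sides, so only the cycle edges (those running around the outer face and connecting the leaves) could possibly differ.

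I would then read off the cyclic order of leaves from the planar embedding of the characteristic tree: starting at $u_0$, visit the up pendants left-to-right, then $u_{\ell+1}$, then the down pendants right-to-left, and return to $u_0$. Applying this rule to both sides of each claim, the two cycles agree everywhere except on the three-vertex segment involving $u_{\ell-1},u_\ell,u_{\ell+1}$. In both right-hand sides $u_\ell$ sits in the same block as $u_{\ell-1}$ and therefore on the same side of the spine, so the cycle contains the subpath $u_{\ell-1}\text{-}u_\ell\text{-}u_{\ell+1}$; in both left-hand sides, the terminal block of size $1$ forces $u_\ell$ to the opposite side of the spine, and the cycle contains instead $u_{\ell-1}\text{-}u_{\ell+1}\text{-}u_\ell$.

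Finally I would verify that the transposition $\phi$ swapping $u_\ell$ and $u_{\ell+1}$ (and fixing every other vertex) is an edge-preserving bijection. Since the unordered pair $\{u_\ell,u_{\ell+1}\}$ of non-spine neighbors of $v_\ell$ is preserved, the common spine, pendant, and extra-leaf edges are automatically respected; and by the comparison above, $\phi$ carries the left-hand cycle segment onto the right-hand one while fixing all remaining cycle edges. Combined, this makes $\phi$ the desired isomorphism, proving both claims at once. No substantive obstacle is anticipated --- the only real care needed is in tracking the alternating up/down block parity at the right end of the caterpillar to confirm that $u_{\ell-1}$ and $u_\ell$ really do lie in the same block in the right-hand graph and in opposite blocks in the left-hand graph, and this is routine bookkeeping.
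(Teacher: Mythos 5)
Your argument is correct. The paper itself offers no proof of Proposition \ref{2-12}: it is presented as an observation ``by the definitions above,'' supported only by the example in Figure \ref{fig:H10} ($H_{2,2,2,1}$, $H_{2,2,4}$, $H_{2,2,3,0}$ being the same graph). Your explicit transposition of $u_\ell$ and $u_{\ell+1}$ is exactly the formalization of that observation, and your bookkeeping of the block sizes (recalling that the actual maximal runs are $m_1,m_2+1,\dots,m_{k-1}+1,m_k$, so both sides have spine length $\sum_i m_i+k$, the terminal block of size $1$ on the left matches against a terminal block of size $m_k+2\geq 3$ on the right) is accurate. One small imprecision worth tightening: the two cycles do not differ only on the three vertices $u_{\ell-1},u_\ell,u_{\ell+1}$ --- the differing edges are $u_{\ell-1}u_{\ell+1}$ and $u_\ell u_j$ on the left versus $u_{\ell-1}u_\ell$ and $u_{\ell+1}u_j$ on the right, where $u_j$ is the rightmost leaf on the opposite side of the spine (or $u_0$ if there is none). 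So $\phi$ does move one edge outside your stated segment, namely $u_\ell u_j\mapsto u_{\ell+1}u_j$; since $u_{\ell+1}u_j$ is indeed a cycle edge of the right-hand graph, the isomorphism still goes through, but the verification should mention this fourth vertex explicitly.
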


\begin{figure}[h]
  \centering
  \includegraphics[width=0.75\textwidth]{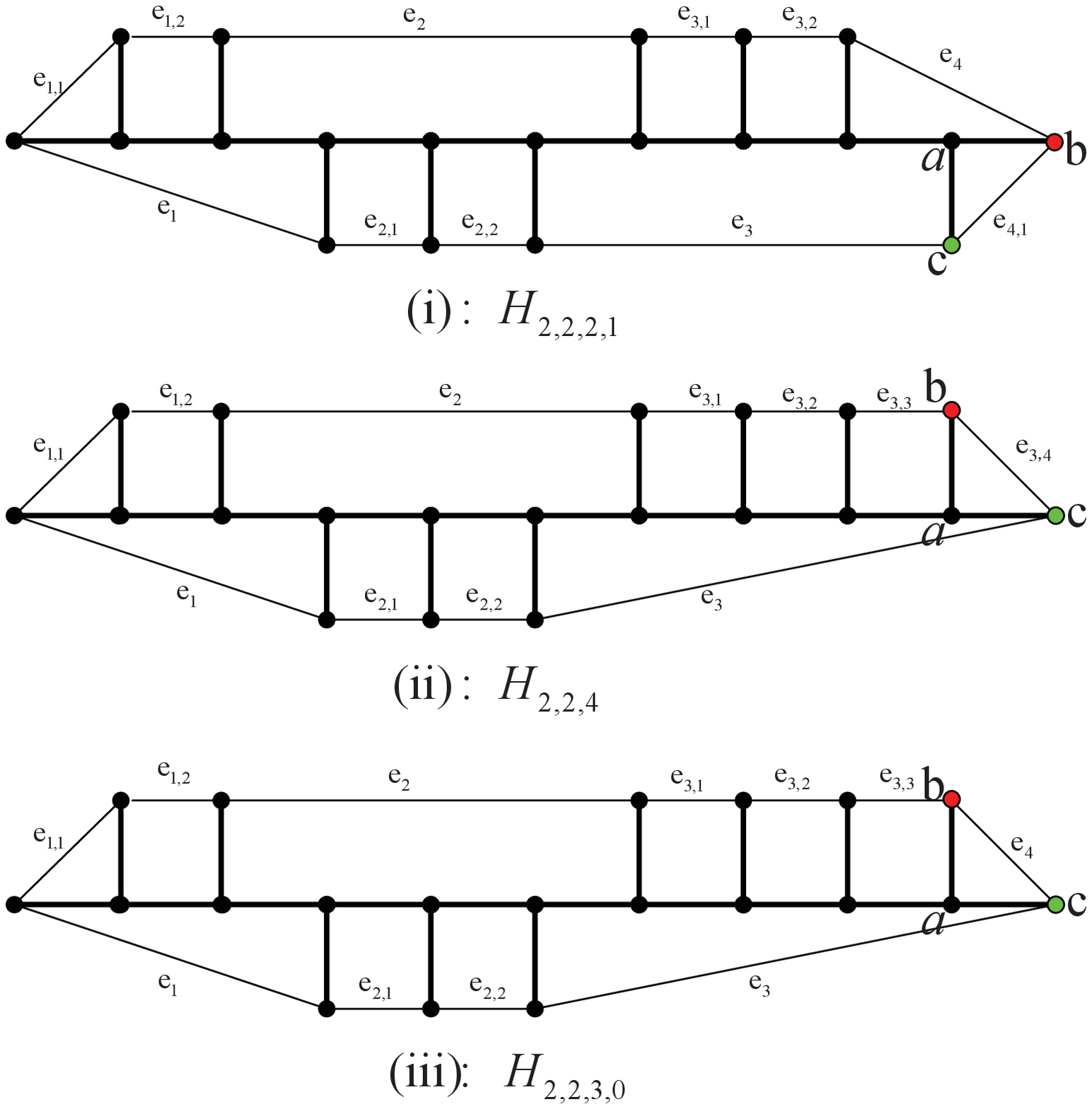}
  \caption{Graphs   $H_{2,2,2,1}$, $H_{2,2,4}$ and $H_{2,2,3,0}$}
  \label{fig:H10}
\end{figure}

The  graphs  $H_{2,2,4}$ and  $H_{2,2,3,0}$ in Figure  \ref{fig:H10}
demonstrate Proposition \ref{x-13}.
The   graphes $H_{2,2,2,1}$ and $H_{2,2,4}$ in Figure  \ref{fig:H10}
 demonstrate  the correction of Proposition \ref{2-12}.
Actually, all the three graphs in Figure  \ref{fig:H10} are the same.

\subsection{ Mohar's theorem} 
Let $G$ be a graph with  a spanning tree $T$,
$(\rho, \lambda)$ be a $T$-rotation system and   $\beta(G)$ be the \mdef{Betti number} of $G$.
We denote   the co-tree edges \hbox{of $T$} by  $e_1,e_2,\ldots,e_{\beta(G)}$. Then, the  \mdef{overlap matrix} of $(\rho, \lambda)$ is a  matrix $M=[m_{ij}]_{i,j=1}^{\beta(G)}$ over $\mZ_2=\{0,1\}$ with $m_{ij}$ given by
$$m_{ij}~=~
\begin{cases}
    1, &\text{if $i=j$ and $e_i$ is twisted;} \\
    1, &\text{if $i\neq j$ and the restriction of the underlying pure rotation system} \\[-2pt]
    &   \text{ to the subgraph $T+e_i+e_j$ is nonplanar;} \\
    0, &\text{otherwise.}
\end{cases}$$
For $i\neq j,$
when  $m_{ij}=1,$  the  edges $e_i$ and $e_j$ are called  \mdef{overlap}.  Mohar \cite{Moh89} proved
 that,   independent of the choice of  spanning tree,   the rank of matrix  $M$ is the same as   the Euler-genus of the corresponding embedding surface.
%

Let  $\eps_{m_1,\cdots,m_k}(j)$ be  the number of \textit{equivalent embeddings} of graph  $H_{m_1,\cdots,m_k}$ into  surface $S_j,$ for $j\geq 0$ and $\cE_{m_1,\cdots,m_k}(z)=\dsum_{j=0}^\infty \eps_{m_1,\cdots,m_k}(j)z^j$ be the  \textit{Euler-genus polynomial} of $H_{m_1,\cdots,m_k}.$
We organize this paper as follows: The  overlap  matrices  for graph  $H_{m_1,\cdots,m_k}$ are given   in Section 2.
In Section 3,
 we obtain   a  recurrence   relation for   $\E_{m_1,\cdots,m_k}(z)$ in Theorem  \ref{5-1}.
 In Section 4,  we derive   a recurrence relation   for  $ \E^1(t_1,t_2,\cdots,t_k,z),$
 where
$$
 \E^1(t_1,t_2,\cdots,t_k,z)=\sum_{m_1,m_2,\cdots,m_k\geq 1} \E_{m_1,m_2,\cdots,m_k}(z)t_1^{m_1}t_2^{m_2}\cdots t_k^{m_k}.
$$
 In Section 5, some examples are demonstrated  to   illustrate our results.

\section{Overlap  matrices for cubic  caterpillar-Halin graphs}
\label{zhazha2}
For  a vertex $v$ with degree three  in  graph $G,$  it  has  two possibilities. We call them   \mdef{clockwise} and \mdef{counterclockwise}.
We color the vertex $v$ according to its rotation. If the   rotation  is \mdef{clockwise},  We color the vertex $v$ \mdef{black}; otherwise,   we color it \mdef{white}.
Therefore,   for any  3-regular graph,
 we associate their  rotation system  with a coloring  which is  called  \mdef{Gustin coloring}.
  In a Gustin coloring, an edge with the same color at both ends  is called \mdef{matched}.
The other edges  are called \mdef{unmatched}.

The following propositions  are useful in the calculation of the  overlap matrix for  $H_{m_1,\cdots,m_k}$.  Applying   the Heffter-Edmonds face-tracing algorithm,  we can obtain their proof.

\begin{proposition}\label{jin1}

 Two co-tree edges  overlap if and only if the common edge of
the  corresponding two interior faces   is unmatched.
\end{proposition}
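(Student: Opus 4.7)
The strategy is to apply the Heffter--Edmonds face-tracing algorithm to the Betti-number-two subgraph $T+e_i+e_j$ equipped with the induced $T$-rotation system. By Mohar's theorem, $m_{ij}=1$ is equivalent to the embedding of $T+e_i+e_j$ being non-planar, i.e. $V-E+F<2$. Because $T$ is a tree, the fundamental cycles $C_i$, $C_j$ of $e_i$, $e_j$ coincide with the boundary walks of the interior faces $F_{e_i}$, $F_{e_j}$ in the plane drawing of $H_{m_1,\dots,m_k}$, so the combinatorial and geometric pictures agree.

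I would first dispose of the case in which $F_{e_i}$ and $F_{e_j}$ share no edge: then $C_i\setminus e_i$ and $C_j\setminus e_j$ are edge-disjoint paths in $T$, so $T+e_i+e_j$ is a union of two cycles joined by a tree path and embeds planarly for every rotation assignment, giving $m_{ij}=0$. Next, when $F_{e_i}$ and $F_{e_j}$ share a unique tree edge $e^*$ with endpoints $u,v$, I would trace the boundary walks of $T+e_i+e_j$ explicitly, starting at a corner of $e^*$. The walks along $C_i$ and $C_j$ meet precisely at the two corners of $e^*$; at every other tree vertex $w$ on $C_i\cup C_j$, the trace enters and exits $w$ once on each of its two passes along the shared path, so the local rotation at $w$ has no net effect on whether the two walks merge or split. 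Thus only the rotations at $u$ and $v$ determine the trace.

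A direct check on the four colorings of $(u,v)$ then completes the proof: matched colorings (black--black or white--white) yield three face traces and hence $V-E+F=2$, so $T+e_i+e_j$ embeds planarly and $m_{ij}=0$; unmatched colorings (black--white or white--black) yield only two face traces, because the two interior-face boundary walks splice into a single walk as the trace crosses $e^*$, so $V-E+F=1$ and $T+e_i+e_j$ is non-planar, giving $m_{ij}=1$.

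The main obstacle is the invariance assertion in the second paragraph: one must verify rigorously that the Gustin color of any intermediate vertex along the common boundary, and more generally along the exterior portions of $C_i$ and $C_j$, cancels out in the face-tracing and that only the colors at the two endpoints of $e^*$ control whether the two interior walks merge. Once this symmetry is established the proposition reduces to the clean four-case check sketched above.
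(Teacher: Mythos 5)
Your route is the intended one: the paper gives no detailed argument for Propositions 2.2--2.6 beyond invoking the Heffter--Edmonds face-tracing algorithm, and your plan (restrict the pure rotation system to $T+e_i+e_j$, identify the two fundamental cycles $C_i$, $C_j$ with the two interior-face boundaries, and trace faces) is exactly that. Two remarks on the structural part. First, the ``main obstacle'' you flag dissolves at once: after discarding pendant tree branches (inserting a pendant branch into any face never changes the number of face traces), what is left of $T+e_i+e_j$ is the theta graph $C_i\cup C_j$, in which every vertex other than the two endpoints $u,v$ of the common edge $e^*$ has degree at most $2$, so its rotation is irrelevant; no cancellation argument along a ``shared path'' is needed, and indeed in a Halin graph (which is $3$-connected) two interior faces share exactly the one edge $e^*$. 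Second, in your disjoint case the justification is thinner than it looks: edge-disjointness of $C_i$ and $C_j$ alone would not force planarity of every rotation system (two cycles meeting at a single vertex of degree $4$ embed on the torus); what saves the claim is that the graph is cubic, so edge-disjoint cycles are vertex-disjoint and $T+e_i+e_j$ then has maximum genus $0$.

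The concrete error is the face count in your unmatched case. The graph $T+e_i+e_j$ is connected with Betti number $2$, so $E-V=1$, and since the restriction of the \emph{underlying pure} rotation system carries no twisted edges the traced surface is orientable; thus $V-E+F=2-2g$ forces $F=3-2g$, i.e.\ either $F=3$ (planar) or $F=1$ (torus). ``Two face traces'' with $V-E+F=1$ is impossible here: Euler characteristic $1$ is the projective plane, which cannot arise from an untwisted rotation system. In the unmatched case all boundary walks splice into a single closed walk, giving $F=1$, Euler genus $2$, hence nonplanarity and $m_{ij}=1$; the matched case gives $F=3$ and planarity, as you say. So your conclusion is the right one, but the stated count shows the trace was not actually carried through; with that count corrected, and the degree-$2$ observation above replacing your unproved invariance claim, the four-case check does prove the proposition in essentially the paper's intended way.
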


\begin{proposition}\label{jin5}

 The  edges $e_i$ and
 $e_{i-1,m_{i-1}}$$(e_i$ and $e_{i+1,1})$  overlap,  for $i\in \{2,\cdots,k\}$$(i\in \{1,\cdots,k-1\})$, if and only if the corresponding  green edge in Figure \ref{1-11}  is unmatched.
\end{proposition}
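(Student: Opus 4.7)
The plan is to derive Proposition \ref{jin5} as an immediate application of Proposition \ref{jin1}. Recall that Proposition \ref{jin1} reduces the overlap of two co-tree edges to an ``unmatched'' condition on the common boundary edge of the two interior faces they bound in the planar embedding of $H_{m_1,\ldots,m_k}$. Therefore it suffices to identify, inside the planar drawing in Figure \ref{1-11}, the pair of interior faces bounded by $e_i$ and $e_{i-1,m_{i-1}}$, and then to recognise their unique common edge as the green edge indicated in the figure; the case $(e_i, e_{i+1,1})$ will be completely symmetric.

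For the face identification I would use the standard bijection between co-tree edges and bounded faces of a cubic Halin graph: each co-tree edge $e$, together with the unique tree path joining its endpoints, closes a cycle which bounds exactly one interior face, and this assignment is a bijection because the number of bounded faces of a Halin graph equals $\beta(H_T)$ by Euler's formula. Under this bijection, $e_{i-1,m_{i-1}}$ (the last pendant co-tree edge of block $i-1$) bounds the interior face pinched between itself and the transition vertex of the spine, while $e_i$ (the connecting co-tree edge between blocks $i-1$ and $i$) bounds the interior face that straddles precisely that transition vertex. Inspection of the planar layout then shows that these two interior faces meet along exactly one edge of the spanning tree $T$, namely the spine edge at the transition vertex, and this is exactly the edge drawn green in Figure \ref{1-11}. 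Applying Proposition \ref{jin1} to this common edge yields the claimed equivalence.

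The main technical nuisance will be the case distinction at the transition vertex, because the local picture differs according to the up or down orientation of blocks $i-1$ and $i$ and according to whether $i$ is an interior or a boundary index. However, the number of cases is finite and each is resolved by tracing the two candidate face boundaries with the Heffter-Edmonds algorithm, exactly as in the proof of Proposition \ref{jin1}; thus no serious obstacle arises beyond careful bookkeeping at the interface between consecutive blocks of pendant co-tree edges.
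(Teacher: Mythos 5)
Your overall strategy --- reduce Proposition \ref{jin5} to Proposition \ref{jin1} via the bijection between co-tree edges and interior faces of the Halin graph, then identify the unique common boundary edge of the two faces (the paper itself offers no more detail than ``apply the Heffter--Edmonds face-tracing algorithm'', so this reduction is in the intended spirit) --- is sound, and your observations that each co-tree edge together with its tree path bounds exactly one interior face, and that two adjacent interior faces of a cubic Halin graph share exactly one edge, are correct. The problem is the face-intersection claim itself: the faces associated with $e_i$ and $e_{i-1,m_{i-1}}$ do \emph{not} meet along the spine edge at the transition vertex. They meet along the pendant (leaf) edge $v_ju_j$ at the last spine vertex of the $(i-1)$-st block (respectively, for the pair $(e_i,e_{i+1,1})$, the pendant edge at the first spine vertex of the $(i+1)$-st block); that pendant edge is what is drawn green in Figure \ref{1-11}. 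The spine edge at the transition vertex is instead the unique common edge of the faces of $e_i$ and $e_{i+1}$, i.e.\ it is the thicker black edge governing Proposition \ref{jin2}. So, as written, your identification would prove the criterion of Proposition \ref{jin2} rather than Proposition \ref{jin5}, and it is also inconsistent with the overlap-matrix structure of Section \ref{zhazha2} (the entry $z_{i,0}$ and the entry $y_{i-1}$ must be controlled by different tree edges, which your identification collapses into one).

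To see the error concretely, take $H_{2,2}$ with spine $v_1,\dots,v_4$, end leaves $v_0,v_5$, up pendants $u_1,u_2$ and down pendants $u_3,u_4$, with $e_2$ the long cycle edge $u_2v_5$ and $e_{1,2}=u_1u_2$. The face of $e_2$ is bounded by $u_2v_5$ together with the tree path $u_2v_2v_3v_4v_5$, and the face of $e_{1,2}$ is bounded by $u_1u_2$ together with $u_1v_1v_2u_2$; their unique common edge is the leaf edge $v_2u_2$, not the transition spine edge $v_2v_3$ (which lies on the boundaries of the faces of $e_1=u_3v_0$ and $e_2$). The fix is simply to redo the identification correctly: the face of $e_i$ runs along the whole spine segment spanned by block $i$ and turns up (or down) along the two boundary pendant edges of the neighbouring blocks, while the face of $e_{i-1,m_{i-1}}$ is the quadrilateral between the last two pendants of block $i-1$; their intersection is exactly that boundary pendant edge, and then Proposition \ref{jin1} gives the stated equivalence. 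With that correction (and the finite case check at block interfaces you already anticipate), the proof goes through.
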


\begin{proposition}\label{jin2}
 The  edges $e_i$ and $e_{i+1}$ overlap,  for $i\in \{1,\cdots,k-1\}$,
 if and only if the corresponding thicker black  edge in Figure \ref{1-11}   is unmatched.
\end{proposition}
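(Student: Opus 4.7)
The plan is to reduce Proposition \ref{jin2} to Proposition \ref{jin1} by identifying the common boundary edge of the two interior faces associated with $e_i$ and $e_{i+1}$. By Proposition \ref{jin1}, the co-tree edges $e_i$ and $e_{i+1}$ overlap in the given $T$-rotation system if and only if the edge shared by their corresponding interior faces is unmatched in the Gustin coloring, so the entire substance of Proposition \ref{jin2} is to pin down this shared edge as the thicker black edge drawn in Figure \ref{1-11}.

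The first step is the standard bijection between co-tree edges and interior faces of a plane Halin graph $H_T$. Adjoining a single co-tree edge $e$ to $T$ creates a unique cycle in $T+e$, and because $T$ is a plane subtree of $H_T$ and $C$ is the outer cycle, this cycle bounds a unique interior face $F(e)$ in the plane embedding of $H_{m_1,\cdots,m_k}$. Conversely, every interior face of $H_T$ contains exactly one edge of $C$, and every edge of $C$ is co-tree. Using this correspondence I would associate the interior face $F(e_i)$ to the co-tree edge $e_i$, and similarly $F(e_{i+1})$ to $e_{i+1}$.

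The second step is to read off Figure \ref{1-11}. The edge $e_i$ lies on the outer cycle in the transition region between the $i$-th and $(i{+}1)$-th blocks of pendant edges, so the boundary of $F(e_i)$ runs from the spine vertex where the pendants reverse direction, down the two pendants flanking that transition, and back along $e_i$. An entirely analogous description applies to $F(e_{i+1})$. Tracing both boundaries in the picture, I would verify that $F(e_i)$ and $F(e_{i+1})$ meet in exactly one edge, which is the short segment of the spine drawn thicker and black in Figure \ref{1-11}; an appeal to Proposition \ref{jin1} then yields the stated equivalence.

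The main obstacle is the figure bookkeeping in the second step. Since the transition faces $F(e_i)$ and $F(e_{i+1})$ can be quite different in shape depending on whether the intermediate block of pendants points up or down, on the block length, and on whether $i$ is an extreme value in $\{1,\dots,k-1\}$, one must check in each combination that the intersection of the two face boundaries is a single spine edge, namely the one highlighted in Figure \ref{1-11}. Once that identification is made, Proposition \ref{jin1} finishes the argument with no further computation.
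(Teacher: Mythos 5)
Your approach is essentially the paper's: the paper gives no written argument for Propositions \ref{jin1}--\ref{jin4} beyond the remark that they follow from the Heffter--Edmonds face-tracing algorithm, and Proposition \ref{jin2} is indeed just the specialization of Proposition \ref{jin1} in which the common boundary edge of the two interior faces attached to $e_i$ and $e_{i+1}$ is identified, from Figure \ref{1-11}, as the thicker black spine edge at the transition between the $i$-th and $(i+1)$-st pendant blocks. One inaccuracy in your sketch: $F(e_i)$ is not a small face at the transition vertex bounded by the two flanking pendants; since $e_i$ can overlap with every one of $e_{i,1},\dots,e_{i,m_i}$ (these are the entries $z_{i,1},\dots,z_{i,m_i}$ of the overlap matrix), its face must share an edge with each $F(e_{i,\ell})$, so $F(e_i)$ is the long face running along the entire $i$-th block on the side opposite that block's pendants, bounded by the spine edges of that stretch, the pendant edge(s) at its end(s), and $e_i$ itself. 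With that corrected description, $F(e_i)$ and $F(e_{i+1})$ lie on opposite sides of the spine over consecutive blocks and meet in exactly one edge, the transition spine edge highlighted in Figure \ref{1-11}, so your appeal to Proposition \ref{jin1} yields the stated equivalence.
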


\begin{proposition}\label{jin3}
The edges $e_i$ and $e_{i,\ell}$ overlap,  for $i\in \{1,\cdots,k\}$,$1\leq \ell\leq m_i$ if and only if the corresponding  red edge in Figure \ref{1-11}   is unmatched.
\end{proposition}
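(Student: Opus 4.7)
The plan is to read Proposition \ref{jin3} as a face-adjacency statement and reduce it directly to Proposition \ref{jin1}. By Proposition \ref{jin1}, the co-tree edges $e_i$ and $e_{i,\ell}$ overlap if and only if the tree edge shared by their two corresponding interior faces is unmatched; so the whole content of Proposition \ref{jin3} reduces to identifying this shared edge and verifying that it is precisely the red edge highlighted in Figure \ref{1-11}.

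To carry this out I would first fix the planar (pure) rotation system displayed in Figure \ref{1-11}. Since the overlap of $e_i$ and $e_{i,\ell}$ depends only on the restriction of the rotation system to the subgraph $T+e_i+e_{i,\ell}$, I may work locally with the part of the spanning tree near these two co-tree edges. Next I would apply the Heffter-Edmonds face-tracing algorithm to this local sub-embedding to locate the two interior faces $F_i$ and $F_{i,\ell}$ whose fundamental cycles are determined by $e_i$ and $e_{i,\ell}$ respectively. Tracing the boundary of $F_i$ shows that it runs through $e_i$, the spine tree edges at the endpoints of $e_i$, and the leaf edges of the $i$-th up/down block; tracing the boundary of $F_{i,\ell}$ shows that it runs through $e_{i,\ell}$, the two leaf tree edges at its endpoints, and the spine edge between them.

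The essential step is then the observation that the tree-edge intersection $\partial F_i\cap \partial F_{i,\ell}$ contains exactly one edge, namely the red edge marked in Figure \ref{1-11}; in particular $F_i$ and $F_{i,\ell}$ are distinct, so Proposition \ref{jin1} is applicable. Combined with the definition of a \emph{matched} edge in the Gustin coloring associated to $(\rho,\lambda)$, this yields the equivalence asserted in Proposition \ref{jin3}.

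The main obstacle here is purely notational, not conceptual: one has to keep track of which cycle edges are placed in the spanning tree $T$ (indicated by the thicker lines in Figure \ref{1-11}) and which are co-tree, since the answer depends on the local up/down pattern around $v_i$. Once the local picture is drawn carefully, the boundary walks of $F_i$ and $F_{i,\ell}$ become straightforward to read off, and the common red edge stands out. The extremal sub-cases $\ell=1$ and $\ell=m_i$, where the red edge is incident to the spine or to a block boundary, need a separate look, but Proposition \ref{2-11} lets one reduce one of these end-cases to the other by symmetry.
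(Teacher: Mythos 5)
Your overall strategy is the right one, and it is essentially what the paper intends: the paper gives no detailed argument for Propositions \ref{jin1}--\ref{jin4} beyond the remark that the Heffter--Edmonds face-tracing algorithm proves them, and reading Proposition \ref{jin3} as the specialization of Proposition \ref{jin1} in which one identifies the unique tree edge shared by the two interior faces attached to $e_i$ and $e_{i,\ell}$ is exactly the intended reduction. The problem is that your description of the boundary walk of $F_i$ is incorrect, and the error sits precisely at the step that carries all the content of the proposition. Here the spanning tree is the characteristic caterpillar $T$ itself (all spine and pendant edges; every cycle edge is co-tree), so the interior face attached to a cycle edge is bounded by that edge together with the tree path joining its endpoints. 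The endpoints of $e_i$ are leaves of the \emph{neighbouring} blocks (or $v_0$, $v_{\ell+1}$), and a tree path can pass through a leaf only as an endpoint; hence $\partial F_i$ consists of $e_i$, the (at most two) pendant edges at the endpoints of $e_i$, and the spine edges running across the $i$-th block. It does \emph{not} contain the pendant edges of the $i$-th block, contrary to your claim that it runs through ``the spine tree edges at the endpoints of $e_i$ and the leaf edges of the $i$-th up/down block.''

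This matters because $F_{i,\ell}$ is the quadrilateral (triangle in the extreme cases at $v_0$, $v_{\ell+1}$) bounded by $e_{i,\ell}$, the two pendant edges at its ends, and one spine edge $v_jv_{j+1}$. With your description of $\partial F_i$, the intersection $\partial F_i\cap\partial F_{i,\ell}$ would consist of the two pendant edges of $F_{i,\ell}$, so your key claim that the intersection is a single edge would fail and you would not land on the red edge at all. With the corrected boundary walk, the intersection is exactly the one spine edge $v_jv_{j+1}$ of $F_{i,\ell}$ (the pendant edges of block $i$ are not on $\partial F_i$), and this is the red edge of Figure \ref{1-11}; Proposition \ref{jin1} then gives the statement. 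A further small point: the genuinely exceptional cases are only the two triangular faces at $v_0$ and $v_{\ell+1}$, i.e.\ $(i,\ell)=(1,1)$ and $(i,\ell)=(k,m_k)$, not every $\ell\in\{1,m_i\}$; the symmetry $H_{m_1,\ldots,m_k}\cong H_{m_k,\ldots,m_1}$ of Proposition \ref{2-11} does let you treat one of them by the other, as you say.
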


\begin{proposition}\label{jin4}

 The  edges $e_{i,\ell}$ and
 $e_{i,\ell +1}$ overlap,  for $i\in \{1,\cdots,k\}$,$1\leq \ell\leq m_i-1 $ if and only if the correspondence blue edge in Figure \ref{1-11}   is unmatched.
\end{proposition}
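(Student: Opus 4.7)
The plan is to derive Proposition \ref{jin4} directly from Proposition \ref{jin1}, by locating the two interior faces whose boundaries contain the co-tree edges $e_{i,\ell}$ and $e_{i,\ell+1}$ and then identifying their common edge.

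First, using the planar embedding of $H_{m_1,\cdots,m_k}$ drawn in Figure \ref{1-11} and the indicated spanning tree, I would observe that within the $i$-th block the co-tree edges $e_{i,1},\ldots,e_{i,m_i}$ are consecutive arcs of the outer cycle, so $e_{i,\ell}$ and $e_{i,\ell+1}$ meet at exactly one vertex, namely a leaf $u_j$ of the characteristic tree. Next I would trace the boundary of the interior face incident to $e_{i,\ell}$, and separately the one incident to $e_{i,\ell+1}$, using the Heffter-Edmonds face-tracing procedure applied to the $T$-rotation system $(\rho,\lambda)$. Each of these boundaries is a quadrilateral built from a co-tree cycle edge, two tree-pendant edges, and a spine edge; comparing the two walks, the only edge they have in common is the pendant tree edge $v_j u_j$, which is precisely the blue edge marked in Figure \ref{1-11}.

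Once the common edge has been identified, Proposition \ref{jin1} immediately yields the conclusion: $e_{i,\ell}$ and $e_{i,\ell+1}$ overlap if and only if this blue edge is unmatched. The main technical point is the boundary tracing itself, where one must correctly invoke the Gustin coloring at $u_j$, at the adjacent spine vertices, and at the neighbouring leaves in order to confirm that each face walk closes up as a quadrilateral and that no other edge is shared. However, because pendants within a single block all point in the same direction (up or down), the local configuration around every interior index $\ell\in\{1,\ldots,m_i-1\}$ is uniform, so a single generic case check establishes the claim for all admissible $i$ and $\ell$.
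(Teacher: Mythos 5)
Your proposal is correct and is essentially the paper's own argument: the paper disposes of Propositions \ref{jin1}--\ref{jin4} with the single remark that the Heffter--Edmonds face-tracing algorithm proves them, and your reduction of Proposition \ref{jin4} to Proposition \ref{jin1} by identifying the two quadrilateral interior faces containing $e_{i,\ell}$ and $e_{i,\ell+1}$ and their unique shared edge (the pendant, blue, edge at the common leaf) is precisely what the coloring in Figure \ref{1-11} encodes. One small point of care: the quadrilateral faces and their common edge should be read off from the fixed planar drawing of $H_{m_1,\cdots,m_k}$ in Figure \ref{1-11} (the reference embedding), not from the face structure of the arbitrary $T$-rotation system $(\rho,\lambda)$ under consideration, which enters the overlap criterion only through whether that shared edge is matched or unmatched in the Gustin coloring.
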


For  any $n\geq m$  and  $a=(a_1,\cdots,a_n)\in (\mZ_2)^{n},b=(b_1,\cdots,b_{n-1}) \in (\mZ_2)^{n-1} $,
we define $\mathcal{L}_m^{a,b}$ as a matrix of this  form:
\begin{eqnarray}
\label{x-2}
  \begin{bmatrix}
     a_1 & b_1  &  &\textbf{0 }
     \\
     b_1 & a_2  & b_2
  \\
    &  b_2 & a_3  & \ddots
     \\
   \textbf{0} && \ddots&\ddots   & b_{m-1}
     \\    & &   &    b_{m-1} &a_{m}
    \end{bmatrix}.
\end{eqnarray}
Let
$
  \cL_m=\{\mathcal{L}_m^{a,b}: a_i\in \mZ_2, i=1,\cdots,m;  b_j\in \mZ_2, j=1,\cdots,m-1\}
$
and  $L_m (z)=\sum_{j=0}^{m}L_{m}[j]z^j,$
where $L_{m}[j]$ is the   number of matrices of the form (\ref{x-2}) for  which this matrix  has rank $j$.

Let
\begin{eqnarray}
\label{1-2}
\begin{split}
\textbf{x}_0& =  (x_1,\cdots,x_k)\in (\mZ_2)^k,
\\ \textbf{y}_0& =  (y_1,\cdots,y_{k-1})\in (\mZ_2)^{k-1},
\\  \textbf{x}_i & =  (x_{i,1},\cdots,x_{i,m_i})\in (\mZ_2)^{m_i}, ~i=1,\cdots,k,
 \\  \textbf{y}_i & =  (y_{i,1},\cdots,y_{i,m_i-1})\in (\mZ_2)^{m_i-1},~ i=1,\cdots,k,
 \\   \textbf{z}_1 & = (z_{1,1}\cdots,z_{1,m_1+1})\in (\mZ_2)^{m_1+1},
\\   \textbf{z}_i & = (z_{i,0},z_{i,1},\cdots,z_{i,m_i+1})\in (\mZ_2)^{m_i+2},~i=2,\cdots,k-1,
\\  \textbf{z}_{k} &= (z_{k,0},z_{k,1}, \cdots,z_{k-1,m_k})\in (\mZ_2)^{m_k+1},
\end{split}
\end{eqnarray}
and $X=(\textbf{x}_0,\cdots,\textbf{x}_k),Y=(\textbf{y}_0, \cdots,\textbf{y}_k), Z=(\textbf{z}_1,\cdots,\textbf{z}_k)$.
By Propositions  \ref{jin1}-\ref{jin4}, for any $m_1\geq 1,\cdots,m_k\geq 1$,   the overlap matrix of
 $H_{m_1,\cdots,m_k}$ has
 this    form:  
$$
\Delta_{m_1,\cdots,m_k}^{X,Y, Z}=
\begin{bmatrix}
  \mathcal{L}_k^{\textbf{x}_0,\textbf{y}_0} & M_1^T&M_2^T&
\cdots  & M_{k-1}^T & M_k^T
\\ M_1 & \mathcal{L}_{m_1}^{\textbf{x}_1,\textbf{y}_1} & &
&&
\\ M_2 &&  \mathcal{L}_{m_2}^{\textbf{x}_2,\textbf{y}_2}
&  &\normalsize{\textbf{0}}&
\\ \vdots&&& \ddots & &
\\ M_{k-1} & &\normalsize{\textbf{0}}&&\mathcal{L}_{m_k}^{\textbf{x}_{k-1},\textbf{y}_{k-1}} &
\\ M_k & &&&&\mathcal{L}_{m_k}^{\textbf{x}_k,\textbf{y}_k}
\end{bmatrix},
$$
where $M_1$ is a $m_1\times k$  matrix   given by
\renewcommand\arraystretch{1.00}
\setlength{\arraycolsep}{1.00pt}
$$
\small{
\begin{bmatrix}
  z_{1,1}&0& 0&  \cdots &0
     \\  z_{1,2}&0& 0&  \cdots &0
       \\  \vdots &\vdots& \vdots &\vdots&\vdots
        \\  z_{1,{m_1-1}} &0& 0&  \cdots &0
       \\  z_{1,m_1} &z_{2,0}&  0& \cdots &0
\end{bmatrix}.
}
$$
For $2\leq i\leq k-1$, $M_i$ is a  $m_i\times k$  matrix with    the $i-1,i,i+1$
columns   given by the following  three  columns, respectively,
\renewcommand\arraystretch{1.00}
\setlength{\arraycolsep}{1.00pt}
\small{\begin{eqnarray*}
  \left(
  \begin{matrix}
  & z_{i-1,m_{i-1}+1}
  \\
  &0
  \\
  &\vdots
  \\
  & 0
  \\
  &0
  \end{matrix}
  \right),
   \left(
  \begin{matrix}
  & z_{i,1}
  \\
  &z_{i,2}
  \\
  &\vdots
  \\
  & z_{i,m_i-1}
  \\
  &z_{i,m_i}
  \end{matrix}
  \right),
     \left(
  \begin{matrix}
  & 0
  \\
  &0
  \\
  &\vdots
  \\
  & 0
  \\
  &z_{i+1,0}
  \end{matrix}
  \right);
\end{eqnarray*}
}
the other columns   of  $M_i$  are   $\textbf{0};$
$M_k$ is a $m_k\times k$ matrix   given by
\renewcommand\arraystretch{1.00}
\setlength{\arraycolsep}{1.00pt}
$$
\small{
\begin{bmatrix}
  0 & \cdots & 0 &z_{k-1,m_{k-1}+1} &z_{k,1}
     \\  0 &\cdots & 0& 0  &z_{k,2}
       \\  \vdots &\vdots& \vdots &\vdots
        \\  0  &\cdots & 0& 0  &z_{k,m_k-1}
       \\  0  &\cdots &0 & 0  &z_{k,m_k}
\end{bmatrix}.
}
$$
In this paper,   the \textit{transpose} of  $M_i$ is denoted by $M_i^T$.

We can also write  $\Delta_{m_1,\cdots,m_k}^{X,Y, Z}$
 in the this  form:
\renewcommand\arraystretch{1.00}
\setlength{\arraycolsep}{1.00pt}
$$
\tiny{
\left[
    \begin{matrix}
     \begin{matrix}
&&&& &  0
\\ &&&& & 0
\\ &&  \normalsize{\text{$\mathcal{L}$}}_{k-1}^{\textbf{x}_0,\textbf{y}_0} && & \vdots
\\ &&  &&&  0
\\ &&&& & y_{k-1}
\\ 0 & 0 &  \cdots & \text{0~~}  &\text{~~~~~$y_{k-1}$~~~~~}& \text{~$x_k$~}
\end{matrix}
  &  \normalsize{\text{$M^{T}_1$}}&  \cdots  &\normalsize{\text{$M^{T}_{k-1}$}} &
  \begin{matrix}
  0 &  0 &  \cdots & 0 &0 &0
     \\  0 & 0&\cdots & 0& 0  &0
       \\  \vdots &\vdots& \vdots &\vdots &\vdots &\vdots
             \\  0 & 0  &\cdots &0 & 0  &0
       \\  z_{k-1,m_{k-1}+1} & 0  &\cdots &0 & 0  &0
       \\  z_{k,1} & z_{k,2} &\cdots &z_{k,m_k-2} & z_{k,m_k-1}  &\text{~~$z_{k,m_k}$~~}
\end{matrix}
 \\
 \normalsize{\text{$M_1$}} &   \normalsize{\text{$\mathcal{L}$}}_{m_1}^{\textbf{x}_1,\textbf{y}_1}&&
 \\ \vdots  && \ddots & &  \normalsize{\textbf{0}}
 \\ \normalsize{\text{$M_{k-1}$}}  &&& \normalsize{\text{$\mathcal{L}$}}_{m_{k-1}}^{\textbf{x}_{k-1},\textbf{y}_{k-1}}&
 \\
\begin{matrix}
  0 & 0&  ~~\cdots~~ & 0 &z_{k-1,m_{k-1}+1} &z_{k,1}
     \\  0 & 0& \cdots & 0& 0  &z_{k,2}
       \\  \vdots&\vdots  &\vdots& \vdots &\vdots &\vdots
           \\  0  & 0 &\cdots &0 & 0  &z_{k,m_k-2}
       \\  0  & 0 &\cdots &0 & 0  &z_{k,m_k-1}
       \\  0  &0& \cdots &0 & 0  &z_{k,m_k}
\end{matrix}
 &    &\normalsize{\textbf{0}} & &
\begin{matrix}
  ~~~x_{k,1}~~~&y_{k,1}&&&&
     \\   y_{k,1} &\ddots~~& \ddots&&&
     \\ & \ddots ~~~& \ddots &\ddots
       \\     &  & \ddots~ & x_{k,m_k-2} &y_{k,m_k-2}&
       \\   & &  &y_{k,m_k-2}& x_{k,m_k-1}  &y_{k,m_k-1}
       \\   &  && &  y_{k, {m_k-1}}  & x_{k,m_k}
\end{matrix}
\end{matrix}
\right]
}
$$

Actually,  for $i\in \{1,\cdots,k\}$,
\begin{itemize}
  \item $x_{i}=1$  if and only if  the  edge  $e_i$ is twisted.
  \item $1\leq \ell\leq m_i$,   $x_{i,\ell}=1$  if and only if  the  edge  $e_{i,\ell}$ is twisted.
\end{itemize}

For $i\in \{2,\cdots,k\}$,
\begin{itemize}
  \item $z_{i,0}=1$  if and only if  the  edges  $e_i$ and $e_{i-1,m_{i-1}}$ overlap.
\end{itemize}
For $i\in \{1,\cdots,k-1\}$,
\begin{itemize}
  \item  $z_{i,m_i+1}=1$  if and only if  the  edges  $e_i$ and $e_{i+1,1}$ overlap,
  \item  $y_i=1$  if and only if  the  edges  $e_i$ and $e_{i+1}$ overlap.
\end{itemize}
For $i\in \{1,\cdots,k\} $ and
\begin{itemize}
  \item  $1\leq \ell\leq m_i$,   $z_{i,\ell}=1 $ if  and only if  the   edges  $e_i$ and $e_{i,\ell}$ overlap,
  \item    $1\leq \ell\leq m_i-1 $,   $ y_{i,\ell}=1$  if and only if  the  edges  $e_{i,\ell}$ and $e_{i,\ell+1}$ overlap.
\end{itemize}

\begin{theorem}\label{jin6}
Consider the cubic caterpillar-Halin graph  $H_{m_1,\cdots,m_k}$.  For any  fixed overlap
matrix $\Delta_{m_1,\cdots,m_k}^{X,Y, Z}$  of graph  $H_{m_1,\cdots,m_k}$, there are exactly 2 different $T$-rotation systems of
 $H_{m_1,\cdots,m_k}$  corresponding to that matrix.
 \end{theorem}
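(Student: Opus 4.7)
The plan is to reduce the counting to elementary information about Gustin colorings. Since $H_{m_1,\cdots,m_k}$ is $3$-regular, specifying a cyclic rotation at a vertex is the same as choosing one of its two possible orientations, i.e.\ a Gustin color. Thus a $T$-rotation system $(\rho,\lambda)$ is equivalent to a pair $(c,\lambda)$, where $c:V(H_{m_1,\cdots,m_k})\to\{\text{black},\text{white}\}$ is a Gustin coloring and $\lambda$ is a twist function constrained to vanish on the edges of the spanning tree $T$.

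Next I would recover each ingredient of $(c,\lambda)$ from the fixed overlap matrix $\Delta_{m_1,\cdots,m_k}^{X,Y, Z}$. The twist function is immediate: each diagonal entry $m_{ii}$ of the matrix equals $\lambda(e_i)$ for the co-tree edge $e_i$ by the very definition of the overlap matrix, and $\lambda$ vanishes on the tree edges, so $\lambda$ is determined uniquely. For the coloring, Propositions~\ref{jin1}--\ref{jin5} translate every off-diagonal entry of $\Delta_{m_1,\cdots,m_k}^{X,Y, Z}$ into the matched/unmatched status of a specific edge of $H_{m_1,\cdots,m_k}$ --- the green, thicker black, red, and blue edges highlighted in Figure~\ref{1-11}. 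Knowing whether such an edge is matched is exactly knowing whether its two endpoints receive equal or opposite Gustin colors.

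Let $E^{\ast}$ denote the collection of these distinguished edges. The crux of the proof is the combinatorial claim that the subgraph $(V(H_{m_1,\cdots,m_k}),E^{\ast})$ is spanning and connected. A direct inspection of Figure~\ref{1-11} should confirm this: the blue edges chain together the pendant leaves within each block, the red edges attach those pendants to the spine, the thicker black edges run along the spine between successive junction vertices, and the green edges on either side of a junction tie the coloring of neighboring blocks together. Once this is verified, the standard signed-graph propagation argument applies: fixing the color of any single vertex forces the color of every other vertex by walking along $E^{\ast}$ and flipping at unmatched edges. Two initial choices are available, and both yield consistent colorings because $\Delta_{m_1,\cdots,m_k}^{X,Y, Z}$ is given to be the overlap matrix of some $T$-rotation system and the global color swap preserves all matched/unmatched statuses as well as all twists. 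Combined with the uniqueness of $\lambda$, this produces exactly two $T$-rotation systems.

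The main obstacle is the spanning and connectedness verification for $E^{\ast}$. Concretely, one needs the green edges supplied by Proposition~\ref{jin5} at each block boundary to actually link the propagated coloring of one block to the next; were even a single junction not covered by $E^{\ast}$, the count of $T$-rotation systems sharing an overlap matrix would double for every uncovered junction and the theorem would fail. All remaining steps reduce to routine bookkeeping on the explicit block structure shown in Figure~\ref{1-11}.
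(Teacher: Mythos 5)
Your proposal is correct and follows essentially the same route as the paper: translate the off-diagonal entries of $\Delta_{m_1,\cdots,m_k}^{X,Y,Z}$ via Propositions~\ref{jin1}--\ref{jin5} into matched/unmatched statuses of the distinguished edges of Figure~\ref{1-11}, then propagate the Gustin coloring from a single root vertex, the two choices of root color giving exactly two $T$-rotation systems. Your explicit remarks that the diagonal entries fix $\lambda$ and that the distinguished-edge subgraph must be spanning and connected are just a more careful spelling-out of the paper's ``by recursions, color all the rest of the vertices'' step.
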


\begin{proof}
The values of   $\textbf{y}_0,\cdots,\textbf{y}_k,\textbf{z}_1,\cdots,\textbf{z}_k$ in (\ref{1-2}) can be  determined according to the matrix $\Delta_{m_1,\cdots,m_k}^{X,Y, Z}$.
We make the following discussions.

\begin{itemize}
  \item Case 1:  $z_{1,1}=1$.   If we color the vertex $v_{0,0}$ in Figure \ref{1-11}  black, by Propositions \ref{jin1},\ref{jin3}, we  need to  color the vertex  $v_{1,1}$  white.   Since the values of
  $\textbf{y}_0,\cdots,\textbf{y}_k,\textbf{z}_1,\cdots,\textbf{z}_k$ are given, by recursions and Propositions  \ref{jin1}-\ref{jin4}, we    color all  the rest of vertices.   Thus,  all the rotations of vertices in $H_{m_1,\cdots,m_k}$ are  determined.
For the case we color  the vertex $v_{0,0}$ white,   all the rotations of vertices in $H_{m_1,\cdots,m_k}$ are   determined by a  similar way.

  \item Case 2: $z_{1,1}=0$.  The discussions   are  similar  and we omit   the details.
\end{itemize}
Since we can   color  the rooted vertex  black or white, we finish our  proof.
\end{proof}

Let  $\delta_{m_1,m_2,\cdots,m_k}(z)
=\sum_{j=0}^{k+m_1+\cdots+m_k}\delta_{m_1,m_2,\cdots,m_k}[j]z^j
$,
where $\delta_{m_1,m_2,\cdots,m_k}[j]$
 is  the number of matrices of the form $\Delta_{m_1,\cdots,m_k}^{X,Y,Z}$
for  which this matrix  has rank $j$.
By Theorem \ref{jin6},  we have the following relation between $\E_{{m_1,\cdots,m_k}}(z) $ and
$\delta_{m_1,\cdots,m_k}(z).$
\begin{theorem}
\label{aa-1}
We have
$$\E_{{m_1,\cdots,m_k}}(z)
=2\delta_{m_1,\cdots,m_k}(z),$$
where $\E_{{m_1,\cdots,m_k}}(z)$ is   the Euler-genus polynomial of  $H_{m_1,\cdots,m_k}$.
\end{theorem}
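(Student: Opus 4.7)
The plan is to combine Theorem \ref{jin6} with Mohar's theorem and read off the coefficient-by-coefficient identity, then multiply through by $z^j$ and sum.

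First, I would unpack the definition of $\eps_{m_1,\cdots,m_k}(j)$. Because two embeddings of $H_{m_1,\cdots,m_k}$ are declared equivalent precisely when their $T$-rotation systems are combinatorially equivalent, $\eps_{m_1,\cdots,m_k}(j)$ equals the number of $T$-rotation systems $(\rho,\lambda)$ of $H_{m_1,\cdots,m_k}$ whose embedding surface has Euler-genus $j$. Mohar's theorem identifies the Euler-genus of the embedding surface with the rank of the associated overlap matrix $M(\rho,\lambda)$, so
\begin{align*}
\eps_{m_1,\cdots,m_k}(j)\;=\;\#\big\{(\rho,\lambda)\;:\;\mathrm{rank}(M(\rho,\lambda))=j\big\}.
\end{align*}

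Next, the parametrization in Section \ref{zhazha2} shows that every overlap matrix of a $T$-rotation system of $H_{m_1,\cdots,m_k}$ takes the block form $\Delta^{X,Y,Z}_{m_1,\cdots,m_k}$ for some choice of parameters $X,Y,Z$ as in $\eqref{1-2}$, and by construction every matrix of that shape is realizable as an overlap matrix. I would therefore partition the set of $T$-rotation systems according to their overlap matrix and then regroup by rank. Theorem \ref{jin6} asserts that each fiber above a fixed $\Delta^{X,Y,Z}_{m_1,\cdots,m_k}$ has cardinality exactly two, hence
\begin{align*}
\eps_{m_1,\cdots,m_k}(j)\;=\;2\cdot\#\big\{\Delta^{X,Y,Z}_{m_1,\cdots,m_k}\;:\;\mathrm{rank}=j\big\}\;=\;2\,\delta_{m_1,\cdots,m_k}[j].
\end{align*}

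Multiplying by $z^j$ and summing over $j\geq 0$ then yields $\E_{m_1,\cdots,m_k}(z)=2\,\delta_{m_1,\cdots,m_k}(z)$. The argument is essentially bookkeeping: the two substantive inputs, namely Mohar's rank-equals-Euler-genus theorem and the exact two-to-one correspondence of Theorem \ref{jin6}, do all the work, and there is no delicate step. The only point I would want to verify carefully is the surjectivity of the map sending a $T$-rotation system to its overlap matrix onto the family indexed by $X,Y,Z$; this is precisely what the parametrization in Section \ref{zhazha2} was constructed to guarantee, so no new work is needed.
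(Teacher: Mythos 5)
Your proposal is correct and follows essentially the same route as the paper: the paper's (largely implicit) justification of Theorem \ref{aa-1} is precisely the combination of Mohar's rank-equals-Euler-genus theorem with the two-to-one correspondence of Theorem \ref{jin6}, applied coefficientwise and summed into the polynomial identity. Your added remark on surjectivity of the map onto the matrices $\Delta^{X,Y,Z}_{m_1,\cdots,m_k}$ is exactly what the parametrization via Propositions \ref{jin1}--\ref{jin4} provides, so no new work is needed.
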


For   $X=(\textbf{x}_0,\cdots,\textbf{x}_k),Y=(\textbf{y}_0,\cdots,\textbf{y}_k),
Z=(\textbf{z}_1,\cdots,\textbf{z}_{k-1})$,
when the  $k$-th row and $k$-th column of  matrix $\Delta_{m_1,\cdots,m_k}^{X,Y, Z}$  are  $\textbf{0},$
we denote this matrix   by  $\Lambda_{m_1,\cdots,m_k}^{X,Y,Z}$.
We  can also  write the   matrix $\Lambda_{m_1,\cdots,m_k}^{X,Y,Z}$ in the following form.
{\renewcommand\arraystretch{1}
\setlength{\arraycolsep}{1pt}

$$
\scriptsize{
\left[
    \begin{array}{ccccc}
     \begin{array}{ccccc}
&&& &  0
\\ &\normalsize{\text{$\mathcal{L}_{k-1}^{\textbf{x}_0,\textbf{y}_0}$}}  && & \vdots
\\ &  &&&  0
\\ &&& & 0
\\ 0  &  \cdots &  0  &\text{~~~~~~~0~~~~~~~}& 0
\end{array}
  & \normalsize{\text{$M^{T}_1$}}&  \cdots  &\normalsize{\text{$M^{T}_{k-1}$}} &
  \begin{array}{ccccc}
  0 &  0 &  \cdots &0 &0
       \\  \vdots &\vdots& \vdots &\vdots &\vdots
             \\  0 & 0  &\cdots &0   &0
       \\  z_{k-1,m_{k-1}+1} & 0  &\cdots &0  &0
       \\  0 & ~~0~~ &\cdots &~~~~0~~~~  &~~~~0~~~~
\end{array}
 \\
 \normalsize{\text{$M_1$}} &   \normalsize{\text{$\mathcal{L}$}}_{m_1}^{\textbf{x}_1,\textbf{y}_1}&&
 \\ \vdots  && \ddots & &  \normalsize{\textbf{0}}
 \\ \normalsize{\text{$M_{k-1}$}}  &&& \normalsize{\text{$\mathcal{L}$}}_{m_{k-1}}^{\textbf{x}_{k-1},\textbf{y}_{k-1}}&
 \\
\begin{array}{ccccc}
  0 &   ~~~\cdots~~~ & 0 &\tiny{z_{k-1,m_{k-1}+1}} &0
     \\  0 & \cdots & 0& 0  &0
       \\  \vdots &\vdots& \vdots &\vdots &\vdots
           \\  0  &\cdots &0 & 0  &0
       \\  0  & \cdots &0 & 0  &0
\end{array}
 &    & \normalsize{\textbf{0}} & &
\begin{array}{ccccc}
  \text{~~~~~$x_{k,1}$~~~~}&y_{k,1}&&&
     \\   \text{~~$y_{k,1}$~} &\ddots~& ~\ddots&&
     \\  &  \ddots ~~& \ddots&\ddots
       \\     & & \ddots~~ &x_{k,{m_k-1}}  &y_{k,m_k-1}
       \\   &  & & y_{k,{m_k-1}}  &x_{k,m_k}
\end{array}
\end{array}
\right]
}
$$
Actually, $\Lambda_{m_1,\cdots,m_k}^{X,Y,Z}$ is the overlap matrix for  graph $H_{n_1,\cdots,n_k}-e_k$  which is obtained by deleting the edge $e_k$ in  the  graph $H_{n_1,\cdots,n_k}$.

For any $m_1\geq 1,\cdots,m_k\geq 1$, we define the following polynomial related to $\Lambda_{m_1,\cdots,m_k}^{X,Y,Z}$,
$$\lambda_{m_1,m_2,\cdots,m_k}(z)=\sum_{j=0}^{k+m_1+\cdots+m_k}\lambda_{m_1,m_2,\cdots,m_k}[j]z^j,$$
where $\lambda_{m_1,m_2,\cdots,m_k}[j]$ is
the number of matrices of the form $\Lambda_{m_1,\cdots,m_k}^{X,Y,Z}$
for  which this matrix  has rank $j$.

In \cite{COZ11},  Chen et al. obtained   the following   overlap matrix  for   Ringel ladder $R_{m-1}$:
\renewcommand\arraystretch{1.15}
\setlength{\arraycolsep}{1.25pt}
\begin{align}
 \Phi_{m+1}^{\textbf{x},\textbf{y},\textbf{z}}= \begin{bmatrix}
     x_0 & z_1  & z_2 & z_3 &\cdots &z_{n-1} &z_n   \\
     z_1 & x_1  & y_1
     \\
    z_2  & y_1 &x_2 &y_2 & &  \normalsize{\textbf{0}}
        \\
     z_3  & &y_2  &x_3 &\ddots
    \\   \vdots  & &   & \ddots &   \ddots &  y_{m-2}
    \\   z_{m-1}  & &  \normalsize{\textbf{0}}  &   &  y_{m-2}&  x_{m-1}&y_{m-1}
     \\   z_{m}  & &   &   & &  y_{m-1}&x_{m}
    \end{bmatrix}.
\end{align}
where  $\textbf{x}=(x_0,x_1,\cdots,x_m)\in \mZ_2^{m+1}, \textbf{y}=(y_1,\cdots,y_{m-1})\in \mZ_2^{m-1}$, $\textbf{z}=(z_1,\cdots,z_{m-1},z_m)\in \mZ_2^{m}$.
Let
$$
\Phi_{m+1}=\{\Phi_{m+1}^{\textbf{x},\textbf{y},\textbf{z}}:\textbf{x}\in \mZ_2^{m+1}, \textbf{y}\in \mZ_2^{m-1}, \textbf{z}\in \mZ_2^{m}   \}
$$
and $\phi_{m+1}(z)=\sum_{j=0}^{m+1}\phi_{m+1}[j]z^j$,
where $\phi_{m+1}[j]$
is the number of matrices in $\Phi_{m+1}$ with  rank $j$.

By  the fact  $H_{m_1,0} \cong R_{m_1}$  and  Proposition \ref{2-11}-Proposition \ref{2-12}, one sees that
\begin{eqnarray}
\label{zhazha4}
  \delta_{m_1,0}(z)=\phi_{m_1+2}(z)\quad \text{and}\quad \delta_{m_1,1}(z)=\phi_{m_1+3}(z).
\end{eqnarray}

  In this article,  the following previously derived results are useful.
\begin{proposition}(\cite{CMZ16})
\label{pp-2}
The following  recurrence relation holds for  $L_n(z), n\geq 3$,
  \begin{eqnarray*}
    L_{m}(z)=(1+2z)L_{m-1}(z)+4z^2 L_{m-2}(z)
  \end{eqnarray*}
and its   initial conditions  are  given by   $L_{1}(z)= z+1, L_{2}(z)= 4z^2+3z+1.$
\end{proposition}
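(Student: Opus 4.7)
The plan is to establish the recurrence by conditioning each $\mathcal{L}_m^{a,b}\in\cL_m$ on its lower-right corner $(b_{m-1},a_m)\in\mZ_2^2$ and, in each of the three non-trivial cases, applying rank-preserving row and column operations over $\mZ_2$ to reduce the matrix to a block-diagonal form whose rank can be read off directly. I would first verify the initial conditions $L_1(z)=1+z$ and $L_2(z)=1+3z+4z^2$ by direct enumeration of the $2$ and $8$ matrices (noting in particular that the all-ones $2\times2$ matrix has rank $1$, not $2$, over $\mZ_2$).

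When $b_{m-1}=0$, the last row and column reduce to $a_me_m$, so $\mathcal{L}_m^{a,b}$ splits as $\mathcal{L}_{m-1}^{a',b'}\oplus(a_m)$ of rank $\mathrm{rank}(\mathcal{L}_{m-1}^{a',b'})+a_m$; summing over $a_m\in\mZ_2$ contributes $(1+z)L_{m-1}(z)$. When $b_{m-1}=1$ and $a_m=1$, my plan is to add column $m$ to column $m-1$ and then row $m$ to row $m-1$. Together these operations kill the off-diagonal ones at positions $(m,m-1)$ and $(m-1,m)$ and shift $(m-1,m-1)$ from $a_{m-1}$ to $a_{m-1}+1$, decoupling the matrix as $\mathcal{L}_{m-1}^{\tilde a,b}\oplus(1)$, where $\tilde a$ is obtained from $a$ by replacing $a_{m-1}$ with $a_{m-1}+1$. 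Because $a_{m-1}\mapsto a_{m-1}+1$ is a bijection on $\mZ_2$, summation over the remaining parameters contributes $zL_{m-1}(z)$.

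The remaining case $b_{m-1}=1,\ a_m=0$ is the main obstacle, since column $m$ now equals the standard basis vector $e_{m-1}$ and both $a_{m-1}$ and $b_{m-2}$ can interfere with the reduction. My plan is a three-step elimination: first subtract $a_{m-1}$ times column $m$ from column $m-1$ to zero out the diagonal entry $a_{m-1}$; then subtract $b_{m-2}$ times column $m$ from column $m-2$ to clear the entry at position $(m-1,m-2)$; and finally subtract $b_{m-2}$ times row $m$ from row $m-2$ to clear the symmetric entry at $(m-2,m-1)$. The resulting matrix is block diagonal $\mathcal{L}_{m-2}^{a'',b''}\oplus\bigl(\begin{smallmatrix}0&1\\1&0\end{smallmatrix}\bigr)$, of rank $\mathrm{rank}(\mathcal{L}_{m-2}^{a'',b''})+2$. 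The crucial observation is that the top-left block depends on neither $a_{m-1}$ nor $b_{m-2}$, so summation over the four choices of $(a_{m-1},b_{m-2})\in\mZ_2^2$ contributes exactly $4z^2L_{m-2}(z)$. Summing the three contributions would then yield $L_m(z)=(1+2z)L_{m-1}(z)+4z^2L_{m-2}(z)$, as claimed.
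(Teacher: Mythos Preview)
Your argument is correct. The paper does not actually prove this proposition---it is quoted as a known result from \cite{CMZ16}---but your case analysis on $(b_{m-1},a_m)$ is precisely the template the paper itself uses for the analogous recurrences (see the four-case table in the proof of Lemma~\ref{11-1} and Lemma~\ref{5-2}, where $(y_{k,m_k-1},x_{k,m_k})$ plays the role of your $(b_{m-1},a_m)$ and the same contributions $1,\ z,\ 4z^2,\ z$ appear). Your treatment of the case $b_{m-1}=1,\ a_m=0$ is in fact more explicit than the paper's corresponding discussions: you spell out the elimination of $a_{m-1}$ and $b_{m-2}$ and the bijection that yields the factor~$4$, whereas the paper records only the resulting contribution in its tables.
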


\begin{proposition} (\cite{COZ11})
\label{2-13}
  The following  recurrence relation holds for   $\phi_m(z), m\geq 3$
  \begin{eqnarray*}
    \phi_{m+1}(z)=(1+4z)\phi_{m}(z)+16z^2 \phi_{m-1}(z)+2^mz^2 L_{m-1}(z)
  \end{eqnarray*}
  with    initial conditions  $\phi_{2}(z)= 4z^2+3z+1, \phi_{3}(z)=28z^3+28z^2+7z+1.$
    Furthermore, we have
    \begin{eqnarray*}
    && \phi(t,z):=\sum_{m\geq 2}\phi_m(z)t^m
    =\frac{t^2\big(1+3z+4z^2-2(1+5z+4z^2+2z^3)t
   -16z^2(2+6z+5z^2)t^2-168z^4(1+z)t^3\big)}{(1-2t-4tz-16z^2t^2)(1-t-4tz-16z^2t^2)}.
    \end{eqnarray*}
    \end{proposition}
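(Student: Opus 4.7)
The plan is to establish the recurrence by analyzing how the rank of $\Phi_{m+1}^{\textbf{x},\textbf{y},\textbf{z}}$ relates to the ranks of smaller matrices through a bordered-matrix argument, and then to solve the recurrence for the generating function.

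First, I would view $\Phi_{m+1}^{\textbf{x},\textbf{y},\textbf{z}}$ as a bordered symmetric matrix of the form
\[
\begin{pmatrix} \Phi_m^{\textbf{x}',\textbf{y}',\textbf{z}'} & v \\ v^T & x_m \end{pmatrix},
\]
where $v = z_m e_0 + y_{m-1} e_{m-1} \in \mZ_2^m$ and $(\textbf{x}',\textbf{y}',\textbf{z}')$ are the parameter vectors truncated by one entry. Over $\mZ_2$, bordering a symmetric matrix $A$ of rank $r$ by a column $v$ and diagonal entry $c$ changes the rank by $0$, $1$, or $2$, with the exact increment determined by whether $v \in \mathrm{col}(A)$ and, if so, whether $c$ matches the quadratic form value $u^T A u$ for any $u$ satisfying $A u = v$.

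Second, I would split the sum defining $\phi_{m+1}(z)$ according to the value of $y_{m-1}$. When $y_{m-1} = 0$, the bordering vector is $v = z_m e_0$, and the analysis reduces to checking whether $e_0 \in \mathrm{col}(\Phi_m)$. Summing over $(z_m, x_m) \in \mZ_2^2$ for each fixed $\Phi_m^{\textbf{x}',\textbf{y}',\textbf{z}'}$ yields the contribution $(1 + 4z)\phi_m(z)$, up to a correction absorbed into the third term. When $y_{m-1} = 1$, one can use this entry as a pivot to perform row and column operations that eliminate the $(m-1)$-th row and column, peeling off a $2 \times 2$ block of rank $2$ and exposing a residual matrix. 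In the generic case, the residual is of type $\Phi_{m-1}$, contributing $16 z^2 \phi_{m-1}(z)$; in the degenerate case where the first row and column decouple from the tridiagonal part, the residual is purely tridiagonal of type $\mathcal{L}_{m-1}^{\textbf{x}'',\textbf{y}''}$, contributing the term $2^m z^2 L_{m-1}(z)$, with the factor $2^m$ arising from the number of free $\mZ_2$-parameters that become unconstrained after the reduction.

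The initial conditions $\phi_2(z) = 4z^2 + 3z + 1$ and $\phi_3(z) = 28z^3 + 28z^2 + 7z + 1$ can be verified by brute-force enumeration of the $2^3 = 8$ and $2^6 = 64$ matrices, respectively, tabulated by rank. To obtain the closed form of $\phi(t,z) = \sum_{m \geq 2} \phi_m(z) t^m$, I would multiply the recurrence by $t^{m+1}$ and sum over $m \geq 2$, substitute a closed-form expression for $\sum_{m \geq 1} L_m(z)(2t)^m$ derived from the recurrence in Proposition \ref{pp-2}, and solve the resulting linear algebraic equation in $\phi(t, z)$.

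The main obstacle will be the bookkeeping in the $y_{m-1} = 1$ case: distinguishing the generic reduction (yielding a $\Phi_{m-1}$-type residual) from the degenerate one (yielding a pure $\mathcal{L}_{m-1}$), and verifying that the number of free binary parameters in the degenerate reduction is exactly $m$, so that the coefficient $2^m$ in the third term of the recurrence emerges precisely.
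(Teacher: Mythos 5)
A preliminary remark: the paper does not prove this proposition at all --- it is quoted from \cite{COZ11} --- so your attempt can only be measured against the technique the paper uses for its analogous new results (Lemma \ref{11-1}, Theorems \ref{11-3} and \ref{5-1}), which is indeed the case analysis on the bordering entries that you propose. Your general strategy (split on the last variables, reduce by row/column operations, then solve for the generating function using $L^*(t,z)$ as in Lemma \ref{pz-10}) is the right one, and the verification of the initial conditions and the generating-function step are routine. The gap is precisely at the point you flag as the ``main obstacle'': your attribution of the three terms of the recurrence to the branches $y_{m-1}=0$ and $y_{m-1}=1$ is wrong, and the bookkeeping as you describe it cannot be completed. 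The term $2^m z^2 L_{m-1}(z)$ does \emph{not} come from a degenerate subcase of $y_{m-1}=1$; it comes from the subcase $(x_m,y_{m-1},z_m)=(0,0,1)$ of the branch $y_{m-1}=0$. There the last row of $\Phi_{m+1}^{\textbf{x},\textbf{y},\textbf{z}}$ is exactly the unit vector hitting the $0$-th column, so pivoting on it annihilates the entire $0$-th row and column; the residual is the pure tridiagonal block $\mathcal{L}_{m-1}$ on indices $1,\dots,m-1$, and the entries $x_0,z_1,\dots,z_{m-1}$ --- exactly $m$ of them --- become irrelevant, which is where the factor $2^m$ really comes from. In the branch $y_{m-1}=1$ no such degeneration is possible: after pivoting at position $m-1$ (when $x_m=0$) the $0$-th row/column survives and remains coupled to the tridiagonal part through $z_1,\dots,z_{m-2}$, so the residual is always of $\Phi_{m-1}$ type, never of pure $\mathcal{L}$ type; and when $x_m=1$ the diagonal pivot simply clears the last row and contributes $z\phi_m(z)$, a case your sketch omits entirely.

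The correct split is: the branch $y_{m-1}=0$ contributes $(1+2z)\phi_m(z)+2^mz^2L_{m-1}(z)$ (subcases $(x_m,z_m)=(0,0),(1,0),(1,1)$ give $\phi_m$, $z\phi_m$, $z\phi_m$, and $(0,1)$ gives the $L$-term), while the branch $y_{m-1}=1$ contributes $2z\phi_m(z)+16z^2\phi_{m-1}(z)$ (subcases $(1,0),(1,1)$ give $z\phi_m$ each, and $(0,0),(0,1)$ give $8z^2\phi_{m-1}$ each, the factor $8$ counting the freed entries $x_{m-1},y_{m-2},z_{m-1}$). A simple count shows your version cannot be patched by the ``correction'' you allude to: with $y_{m-1}=1$ fixed the branch contains $2^{3m-1}$ matrices, whereas $16z^2\phi_{m-1}(z)+2^mz^2L_{m-1}(z)$ accounts for only $2^{3m-2}+2^{3m-3}$ of them, and with $y_{m-1}=0$ fixed the branch contains $2^{3m-1}$ matrices while $(1+4z)\phi_m(z)$ would require $5\cdot2^{3m-3}$. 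Also, the criterion ``$e_0\in\mathrm{col}(\Phi_m)$'' in your $y_{m-1}=0$ analysis is not determined by the rank of $\Phi_m$ alone, so it cannot by itself produce a recurrence in $\phi_m(z)$; the direct row/column reduction above (which is exactly what the paper does in Cases 1--8 of the proof of Theorem \ref{5-1}, with Case 2 there playing the role of $(0,0,1)$) avoids this issue. With the case analysis corrected in this way, the rest of your plan goes through.
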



\section{Euler-genus polynomials $\E_{m_1,\cdots,m_k}(z)$ for graph  $H_{m_1,\cdots,m_k}$}
In subsections  \ref{7-2} and \ref{7-3},  we  give the computations of $\E_{{m_1,m_{2}}}(z)$
and
 $\E_{{m_1,\cdots,m_k}}(z)$ respectively.

\subsection{The computation of $\cE_{{m_1,m_2}}(z)$.}
\label{7-2}
By definition,   $\Delta_{m_1,m_2}^{X,Y,Z}$ is   a matrix given by
\renewcommand\arraystretch{1.02}
\setlength{\arraycolsep}{1.02pt}
$$\left(
   \begin{array}{ccccccccccc}
     x_1 & y_1  &z_{1,1}& z_{1,2}&\cdots  & z_{1,m_1-1} &z_{1,m_1}&z_{1,m_1+1}   & 0& \cdots&0
     \\
     y_1 & x_2  & 0 & 0& \cdots &0&z_{2,0} &z_{2,1} &z_{2,2}&\cdots  & z_{2,m_2}
     \\
     z_{1,1} &0  &x_{1,1} & y_{1,1}
     \\
     z_{1,2} &0 &y_{1,1} &x_{1,2}&  \ddots
     \\
     \vdots & \vdots &&\ddots&\ddots &y_{1,m_1-2} &&&\normalsize{\textbf{0}}&
     \\  z_{1,m_1-1} & 0&&&  y_{1,m_1-2}& x_{1,m_1-1} & y_{1,m_1-1}
     \\
     z_{1,m_1} &z_{2,0}&& & &  y_{1,m_1-1}&x_{1,m_1}
     \\
     z_{1,m_1+1} &z_{2,1}&& &&&& x_{2,1}&y_{2,1}
       \\
     0 &z_{2,2}&& && & &y_{2,1}&x_{2,2} & \ddots
         \\
     \vdots  &\vdots && &\normalsize{\textbf{0}}& &&&\ddots &\ddots &  y_{2,m_2-1}
            \\
     0  &z_{2,m_2} &&&  &&&&&y_{2,m_2-1} &x_{2,m_2}
     \end{array}
 \right)
$$
By definition,  the matrix $\Lambda_{m_1,m_2}^{X,Y, Z}$ is  given by
\renewcommand\arraystretch{1.02}
\setlength{\arraycolsep}{1.02pt}
$$
    \begin{bmatrix}
     x_1 &0 &z_{1,1}& z_{1,2}&\cdots  & z_{1,m_1} &z_{1,m_1+1}&0   & \cdots&0
     \\
    0 & 0  & 0 & 0  &\cdots &0 &0 &0&\cdots  & 0
     \\
     z_{1,1} &0  &x_{1,1} & y_{1,1}&
     \\
     z_{1,2} &0 &y_{1,1} &x_{1,2}&\ddots &
     \\
     \vdots &\vdots&&\ddots&\ddots &y_{1,m_1-1}
     \\
     z_{1,m_1} &0&& &y_{1,m_1-1}&x_{1,m_1} &&&\normalsize{\textbf{0}}
     \\
     z_{1,m_1+1} &0&& &&&x_{2,1}&y_{2,1}
       \\
     0 &0&& &&&y_{2,1}&x_{2,2}& \ddots
         \\
     \vdots  &\vdots && &\normalsize{\textbf{0}}  &&&\ddots &\ddots &y_{2,m_2-1}
            \\
     0  &0 && &&&&&y_{2,m_2-1} &x_{2,m_2}
     \end{bmatrix}
$$

Now, we give a    lemma to demonstrate the    recurrence   relation for   polynomial  $\lambda_{m_1,m_2}(z)$.
\begin{lemma}
\label{11-1}
  For any $m_1\geq 1,m_2\geq 2$,    the following    recurrence   relation holds  for  $\lambda_{m_1,m_2}(z)$
    \begin{eqnarray}\label{1-130}
    \lambda_{m_1,m_2}(z)&=& (1+2z)\lambda_{m_1,m_2-1}(z)+4z^2 \lambda_{m_1,m_2-2}(z)
  \end{eqnarray}
  with initial conditions $\lambda_{m_1,0}(z)= \phi_{m_1+1}(z),\lambda_{m_1,1}(z)= (1+2z)\phi_{m_1+1}(z)+2^{m_1+1}z^2L_{m_1}(z)$
\end{lemma}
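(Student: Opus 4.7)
The plan is to mimic the rank recurrence for $L_m(z)$ (Proposition~\ref{pp-2}), performing a symmetric row/column reduction on the last row and column of $\Lambda_{m_1,m_2}^{X,Y,Z}$.

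I would begin by reading off the last row/column of $\Lambda_{m_1,m_2}^{X,Y,Z}$ (indexed by the co-tree edge $e_{2,m_2}$) via Propositions~\ref{jin1}--\ref{jin4}: the diagonal entry is $x_{2,m_2}$, the only potentially nonzero off-diagonal entry is $y_{2,m_2-1}$, and the entry that would couple $e_{2,m_2}$ to $e_{2}$ vanishes because the $k$-th row and column of $\Lambda$ have been set to $\mathbf{0}$. Hence the last row/column are supported entirely inside $\mathcal{L}_{m_2}^{\textbf{x}_2,\textbf{y}_2}$ and are disjoint from $\mathcal{L}_{k}^{\textbf{x}_0,\textbf{y}_0}$, $\mathcal{L}_{m_1}^{\textbf{x}_1,\textbf{y}_1}$, and the coupling entry $z_{1,m_1+1}$.

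I would then split on $(x_{2,m_2},y_{2,m_2-1})\in\mZ_2^{2}$. In case $(0,0)$ the last row/column vanish and the rank equals that of $\Lambda_{m_1,m_2-1}^{X',Y',Z}$, yielding contribution $\lambda_{m_1,m_2-1}(z)$. In case $x_{2,m_2}=1$, I would use the diagonal~$1$ as a rank-$1$ pivot: a symmetric elementary operation clears $y_{2,m_2-1}$ at the cost of adding $1$ to $x_{2,m_2-1}$, and averaging over $x_{2,m_2-1}\in\mZ_2$ absorbs that change; summing also over $y_{2,m_2-1}\in\mZ_2$ gives the contribution $2z\,\lambda_{m_1,m_2-1}(z)$. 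In case $(0,1)$, the two off-diagonal~$1$'s form a symmetric rank-$2$ pivot that clears $x_{2,m_2-1}$ and $y_{2,m_2-2}$ from the adjacent positions, leaving the upper-left submatrix exactly in the form $\Lambda_{m_1,m_2-2}^{X'',Y'',Z}$; the parameters $x_{2,m_2-1}$ and $y_{2,m_2-2}$ are then inert, contributing a factor~$4$, for a total of $4z^{2}\lambda_{m_1,m_2-2}(z)$. Adding the three contributions yields~(\ref{1-130}).

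The main subtlety to verify is that the elementary operations in each case touch only entries inside $\mathcal{L}_{m_2}^{\textbf{x}_2,\textbf{y}_2}$, so that the coupling entry $z_{1,m_1+1}$ and all other blocks remain unaltered; this follows from the localization established in the first step. For the initial conditions I would analyze $\Lambda_{m_1,0}^{X,Y,Z}$ and $\Lambda_{m_1,1}^{X,Y,Z}$ directly. At $m_2=0$ the block $\mathcal{L}_{m_2}^{\textbf{x}_2,\textbf{y}_2}$ is empty and the non-zero part of $\Lambda_{m_1,0}^{X,Y,Z}$ is precisely a Ringel-ladder overlap matrix of size $m_1+1$, giving $\phi_{m_1+1}(z)$. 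At $m_2=1$ the block is a single entry $x_{2,1}$, coupled to the rest only through $z_{1,m_1+1}$; splitting on $(z_{1,m_1+1},x_{2,1})$, the three cases that do not form a rank-$2$ pivot combine via diagonal-pivot reductions to $(1+2z)\phi_{m_1+1}(z)$, and the remaining case $(1,0)$ supplies a rank-$2$ pivot whose Schur complement vanishes on the coupling column and leaves $\mathcal{L}_{m_1}^{\textbf{x}_1,\textbf{y}_1}$ intact, rendering the $2^{m_1+1}$ parameters $x_1,z_{1,1},\dots,z_{1,m_1}$ inert and yielding $2^{m_1+1}z^{2}L_{m_1}(z)$.
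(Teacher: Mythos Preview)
Your approach is essentially identical to the paper's: both split on $(x_{2,m_2},y_{2,m_2-1})$ and use symmetric row/column reductions to obtain the four contributions $\lambda_{m_1,m_2-1}(z)$, $z\lambda_{m_1,m_2-1}(z)$, $4z^2\lambda_{m_1,m_2-2}(z)$, $z\lambda_{m_1,m_2-1}(z)$, and both treat the initial values by inspecting $\Lambda_{m_1,0}$ and $\Lambda_{m_1,1}$ directly (the paper just writes ``by direct computation'', while you spell out the $(z_{1,m_1+1},x_{2,1})$ case split).

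One small boundary issue: your localization claim that ``the elementary operations touch only entries inside $\mathcal{L}_{m_2}^{\textbf{x}_2,\textbf{y}_2}$, so that $z_{1,m_1+1}$ remains unaltered'' fails in the case $(x_{2,m_2},y_{2,m_2-1})=(0,1)$ when $m_2=2$. There the column you must clear is the $e_{2,1}$-column, whose nonzero entries are $z_{1,m_1+1}$, $x_{2,1}$, and $y_{2,1}$; the first of these lies outside $\mathcal{L}_{m_2}$. The paper avoids this by running the case analysis only for $m_2\geq 3$ and checking $m_2=2$ separately. Your argument is easily repaired the same way, or by noting that at $m_2=2$ the two inert parameters are $x_{2,1}$ and $z_{1,m_1+1}$ (still a factor $4$) and the residual matrix is exactly $\Lambda_{m_1,0}$, so the contribution is still $4z^2\lambda_{m_1,0}(z)$.
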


\begin{proof}

There are $4$  different choices   of values for
$(x_{2,m_2},y_{2,m_2-1}),m_2\geq 3.$

\begin{center}
\begin{tabular}{|c|c|}
\hline Cases  & Contributions to $ \lambda_{m_1,m_2}(z)$\\
\hline  $1:(x_{2,m_2},y_{2,m_2-1})=(0,0)  $&$\lambda_{m_1,m_2-1}(z)  $\\
\hline $2: (x_{2,m_2},y_{2,m_2-1})=(1,0)  $&$z\lambda_{m_1,m_2-1}(z)   $ \\
\hline $3: (x_{2,m_2}, y_{2,m_2-1})=(0,1)  $&$4z^2\lambda_{m_1,m_2-2}(z) $\\
\hline $4:(x_{2,m_2},y_{2,m_2-1})=(1,1)  $&$z \lambda_{m_1,m_2-1}(z)  $
\\ \hline
\end{tabular}
\end{center}
Combining cases 1-4,  one arrives at  (\ref{1-130}).

 By direct computation, one sees that
 \begin{eqnarray*}
  \left\{
  \begin{split}
    \lambda_{m_1,1}(z)&= (1+2z)\phi_{m_1+1}(z)+2^{m_1+1}z^2L_{m_1}(z),
    \\ \lambda_{m_1,2}(z)&=(1+2z)\lambda_{m_1,1}(z)+4z^2\phi_{m_1+1}(z).
  \end{split}
  \right.
  \end{eqnarray*}
  So, this theorem holds with $m_2=2.$

\end{proof}

We have  the    recurrence   relation for    $\cE_{{m_1,m_2}}(z)$.

\begin{theorem}
\label{11-3}
   For any $m_1\geq 1,m_2\geq 2$,    the following    recurrence   relation holds  for  $\cE_{{m_1,m_2}}(z)$,
  \begin{eqnarray}
  \label{2-1}
    \cE_{{m_1,m_2}}(z) &=& (1+4z)\cE_{{m_1,m_2-1}}(z)+ 16z^2\cE_{{m_1,m_2-2}}(z)
    + 2^{m_2+3}z^2\lambda_{m_1,m_2-1}(z)
  \end{eqnarray}
  with initial conditions $\cE_{{m_1,0}}(z)=2\phi_{m_1+2}(z),\cE_{{m_1,1}}(z)=2 \phi_{m_1+3}(z).$
\end{theorem}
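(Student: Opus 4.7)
The plan is to apply Theorem~\ref{aa-1} to reduce the statement to the equivalent recurrence
$$\delta_{m_1,m_2}(z) \;=\; (1+4z)\delta_{m_1,m_2-1}(z) + 16z^2\,\delta_{m_1,m_2-2}(z) + 2^{m_2+2} z^2\, \lambda_{m_1,m_2-1}(z),$$
and prove this by partitioning the family $\{\Delta_{m_1,m_2}^{X,Y,Z}\}$ into eight cases indexed by the triple $(x_{2,m_2},\,y_{2,m_2-1},\,z_{2,m_2}) \in (\mZ_2)^3$ of nonzero entries occurring in the last row/column (indexed by $e_{2,m_2}$). These are, respectively, the diagonal entry, the sub-diagonal entry lying inside the tridiagonal block $\mathcal{L}_{m_2}^{\textbf{x}_2,\textbf{y}_2}$, and the entry coupling the last row to the $e_2$-column. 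The strategy parallels both Lemma~\ref{11-1} (for $\lambda$) and the proof of Proposition~\ref{2-13} (for $\phi$) carried out by Chen et al.~\cite{COZ11}.

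For the four sub-cases with $z_{2,m_2}=0$, the last row/column is supported entirely in the tridiagonal block $\mathcal{L}_{m_2}^{\textbf{x}_2,\textbf{y}_2}$, and the analysis of Lemma~\ref{11-1} carries over verbatim with $\lambda$ replaced by $\delta$: the $(0,0)$, $(1,0)$, $(1,1)$ sub-cases each return a $\Delta_{m_1,m_2-1}$-matrix with rank shift $0,1,1$ respectively, while the $(0,1)$ sub-case pivots out two rows/columns to produce a $\Delta_{m_1,m_2-2}$-matrix with rank shift $2$, giving total contribution $(1+2z)\delta_{m_1,m_2-1}(z) + 4z^2\delta_{m_1,m_2-2}(z)$. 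For the four sub-cases with $z_{2,m_2}=1$, the $1$ at position $(e_{2,m_2},\,e_2)$ serves as a pivot for a symmetric congruence transformation: XOR row $e_{2,m_2}$ into every row whose $e_2$-column entry equals $1$, and symmetrically for columns. This zeros out all remaining entries of the $e_2$-row and $e_2$-column, decoupling a rank-$2$ block on $\{e_2,\,e_{2,m_2}\}$ (contributing a factor $z^2$) from a residual matrix supported on the remaining edges. Because $\Lambda_{m_1,m_2-1}^{X',Y',Z'}$ was defined precisely as the overlap matrix of $H_{m_1,m_2-1}-e_2$, the residual falls into this family; as the $m_2+2$ entries $(y_1,\,z_{2,0},\,z_{2,1},\,\ldots,\,z_{2,m_2-1},\,x_2)$ of the $e_2$-column (other than the pivot) range independently over $\mZ_2$, they produce the combinatorial factor $2^{m_2+2}$. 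Summing over the four sub-cases on $(x_{2,m_2},\,y_{2,m_2-1})$ in this regime then yields the remaining $2z\delta_{m_1,m_2-1}(z) + 12z^2\delta_{m_1,m_2-2}(z)$ together with the full term $2^{m_2+2}z^2\,\lambda_{m_1,m_2-1}(z)$, which combined with the $z_{2,m_2}=0$ contribution completes the recurrence.

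The initial conditions are immediate: from $H_{m_1,0}\cong R_{m_1}$ and $H_{m_1,1}\cong R_{m_1+1}$ (Propositions~\ref{x-13}, \ref{2-12}) together with~(\ref{zhazha4}) and Theorem~\ref{aa-1} one obtains $\cE_{m_1,0}(z) = 2\phi_{m_1+2}(z)$ and $\cE_{m_1,1}(z) = 2\phi_{m_1+3}(z)$. The principal technical obstacle lies in the $z_{2,m_2}=1$ regime: one must verify that as the $m_2+2$ swept entries range over $\mZ_2^{m_2+2}$, the induced modifications to the residual matrix realize each member of $\{\Lambda_{m_1,m_2-1}^{X',Y',Z'}\}$ with uniform multiplicity $2^{m_2+2}$, so that the rank-enumerator of the residual family is precisely $\lambda_{m_1,m_2-1}(z)$ with no correction term. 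A subsidiary difficulty is cleanly allocating, within the $z_{2,m_2}=1$ regime, the rank contributions to the $(1+4z)\delta_{m_1,m_2-1}(z)$ and $16z^2\delta_{m_1,m_2-2}(z)$ pieces versus the $\lambda$-piece; organizing the $8$-case table carefully and cross-checking against the structurally analogous $\phi$-recurrence of Proposition~\ref{2-13} provides a useful consistency check.
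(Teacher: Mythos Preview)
Your overall approach is exactly the paper's: reduce to $\delta$ via Theorem~\ref{aa-1}, then expand the last row/column of $\Delta_{m_1,m_2}^{X,Y,Z}$ according to the eight values of $(x_{2,m_2},y_{2,m_2-1},z_{2,m_2})$. The initial conditions are handled correctly.

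However, your case-by-case accounting is off. In the $z_{2,m_2}=0$ regime you assert that the analysis of Lemma~\ref{11-1} ``carries over verbatim'', giving a contribution $4z^2\delta_{m_1,m_2-2}(z)$ from the sub-case $(x_{2,m_2},y_{2,m_2-1})=(0,1)$. This is false: in $\Lambda_{m_1,m_2}$ the $e_2$-row is identically zero, so pivoting on $y_{2,m_2-1}$ absorbs only the two free entries $x_{2,m_2-1},y_{2,m_2-2}$, whence the factor $4$. In $\Delta_{m_1,m_2}$ the $e_2$-row is \emph{not} zero, and the row indexed by $e_{2,m_2-1}$ carries the additional free entry $z_{2,m_2-1}$; the pivot absorbs all three of $x_{2,m_2-1},y_{2,m_2-2},z_{2,m_2-1}$, so the correct contribution is $8z^2\delta_{m_1,m_2-2}(z)$. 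Consequently the $z_{2,m_2}=0$ regime totals $(1+2z)\delta_{m_1,m_2-1}(z)+8z^2\delta_{m_1,m_2-2}(z)$.

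Your compensating claim that the $z_{2,m_2}=1$ regime contributes $12z^2\delta_{m_1,m_2-2}(z)$ (plus the $\lambda$- and $2z\delta$-terms) is likewise wrong; the correct coefficient is $8z^2$. In the paper's allocation the four $z_{2,m_2}=1$ cases yield, respectively, $2^{m_2+2}z^2\lambda_{m_1,m_2-1}(z)$ from $(0,0,1)$, $8z^2\delta_{m_1,m_2-2}(z)$ from $(0,1,1)$, and $z\delta_{m_1,m_2-1}(z)$ from each of $(1,0,1)$ and $(1,1,1)$. In particular the $\lambda$-term arises from a single case, $(0,0,1)$, where the last row has its unique nonzero at position $e_2$; your description of pivoting on $(e_{2,m_2},e_2)$ uniformly across all four $z_{2,m_2}=1$ sub-cases does not work cleanly when $y_{2,m_2-1}=1$, since the last row then has a second nonzero entry and the sweep perturbs the $e_{2,m_2-1}$-column. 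Your two errors happen to cancel in the final sum, but the intermediate assertions are incorrect and should be fixed.
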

\begin{proof}
 The initial conditions come from (\ref{zhazha4}) and Theorem \ref{aa-1}.

Consider  the overlap matrix  $\Delta_{m_1,m_2}^{X,Y,Z}$ of
 graph $H_{m_1,m_{2}}$.
Since there  are eight different choices   of values for  $(x_{2,m_2}, y_{2,m_2-1},z_{2,m_2})$,
\begin{center}
\begin{tabular}{|c|c|}
\hline Cases  & Contributions to $ \delta_{m_1,m_2}(z)$\\
\hline  $1:(x_{2,m_2},y_{2,m_2-1},z_{2,m_2})=(0,0,0) $&$\delta_{m_1,m_2-1}(z) $\\
\hline $2: (x_{2,m_2}, y_{2,m_2-1},z_{2,m_2})=(1,0,0) $&$z\delta_{m_1,m_2-1}(z)  $ \\
\hline $3: (x_{2,m_2}, y_{2,m_2-1},z_{2,m_2})=(0,1,0) $&$8z^2\delta_{m_1,m_2-2}(z)$\\
\hline $4:(x_{2,m_2}, y_{2,m_2-1},z_{2,m_2})=(0,0,1) $&$2^{m_2+2}z^2\lambda_{m_1,m_2-1}(z) $\\
\hline  $5:( x_{2,m_2}, y_{2,m_2-1},z_{2,m_2})=(0,1,1) $&$8z^2\delta_{m_1,m_2-2}(z)  $\\
\hline $6: (x_{2,m_2}, y_{2,m_2-1}, z_{2,m_2})=(1,1,0)  $&$z\delta_{m_1,m_2-1}(z)   $ \\
\hline $7:  (x_{2,m_2}, y_{2,m_2-1}, z_{2,m_2})=(1,0,1) $&$z\delta_{m_1,m_2-1}(z) $\\
\hline $8:(x_{2,m_2},  y_{2,m_2-1},z_{2,m_2})=(1,1,1)  $&$z\delta_{m_1,m_2-1}(z)  $
\\ \hline
\end{tabular}
\end{center}
 one arrives at
  \begin{eqnarray*}
    \delta_{m_1,m_2}(z)&=& (1+4z)\delta_{m_1,m_2-1}(z)+16z^2\delta_{m_1,m_2-2}(z)
    +2^{m_2+2}z^2\lambda_{m_1,m_2-1}(z).
  \end{eqnarray*}
  Combing the above equality with Theorem \ref{aa-1}, the proof is completed.
\end{proof}

\subsection{The computation of $\E_{{m_1,\cdots,m_k}}(z)$.}
\label{7-3}

Before we  give a recurrence   relation for  $\E_{{m_1,\cdots,m_k}}(z)$,   we introduce a lemma.
\begin{lemma}
\label{5-2}
The following    recurrence   relation holds  for  the polynomial  $\lambda_{m_1,m_2,\cdots,m_k}(z)$ $(k\geq 3, m_1,\cdots,m_{k-1}\geq	1,m_k\geq 2)$,
\begin{eqnarray}
\label{1-8}
\begin{split}
  \lambda_{m_1,m_2,\cdots,m_{k-1}, m_k}(z)&=(1+2z) \lambda_{m_1,m_2,\cdots,m_{k-1},m_k-1}(z)+4z^2 \lambda_{m_1,m_2,\cdots,m_{k-1},m_k-2}(z)
  \end{split}
\end{eqnarray}
  with the initial conditions
\begin{eqnarray*}
&& \lambda_{m_1,\cdots,m_{k-1},0}(z) =\frac{1}{2}\E_{{m_1,\cdots,m_{k-1}}}(z),
\\ && \lambda_{m_1,\cdots,m_{k-1},1}(z) =\frac{1+2z}{2}\E_{{m_1,\cdots,m_{k-1}}}(z) +2^{m_{k-1}+3}z^2\cdot \lambda_{m_1,\cdots,m_{k-1}}(z).
\end{eqnarray*}
\end{lemma}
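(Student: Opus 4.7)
The plan is to mirror the argument of Lemma \ref{11-1}. The recurrence (\ref{1-8}) is proved by a four-way case analysis on the bottom-right parameters $(x_{k,m_k},y_{k,m_k-1})$ of $\Lambda_{m_1,\cdots,m_k}^{X,Y,Z}$. The key structural observation is that, because $\textbf{z}_k$ is omitted from the parameterization of $\Lambda$, the last row and column of this matrix have no contributions from the $M_k^T$ block; the only potentially nonzero entries are the diagonal $x_{k,m_k}$ and the adjacent off-diagonal $y_{k,m_k-1}$ inherited from the tridiagonal tail of $\mathcal{L}_{m_k}^{\textbf{x}_k,\textbf{y}_k}$. The case $(0,0)$ leaves the last row/column zero and reduces directly to $\lambda_{m_1,\cdots,m_{k-1},m_k-1}(z)$; each of $(1,0)$ and $(1,1)$ contributes $z\,\lambda_{m_1,\cdots,m_{k-1},m_k-1}(z)$ via a diagonal $\mathbb{F}_2$-pivot; and $(0,1)$ yields $4z^2\,\lambda_{m_1,\cdots,m_{k-1},m_k-2}(z)$ by means of an off-diagonal rank-two sweep that also frees the neighbours $x_{k,m_k-1}$ and $y_{k,m_k-2}$. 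Summing these four contributions produces (\ref{1-8}).

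For the initial condition $m_k=0$, the block $\mathcal{L}_{m_k}^{\textbf{x}_k,\textbf{y}_k}$ together with the strips $M_k,M_k^T$ all disappear, and the parameters $x_k,y_{k-1}$ become inert; the remaining matrix has the same rank distribution as $\Delta_{m_1,\cdots,m_{k-1}}^{X',Y',Z'}$. Theorem \ref{aa-1} then yields $\lambda_{m_1,\cdots,m_{k-1},0}(z)=\tfrac12\E_{m_1,\cdots,m_{k-1}}(z)$.

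For $m_k=1$, the last row/column of $\Lambda_{m_1,\cdots,m_{k-1},1}^{X,Y,Z}$ carries only $x_{k,1}$ on the diagonal and $z_{k-1,m_{k-1}+1}$ in the slot corresponding to $e_{k-1}$. I would then case-split on the pair $(x_{k,1},z_{k-1,m_{k-1}+1})$: the three subcases other than $(0,1)$ either leave the last row/column zero or admit a diagonal pivot that reduces to $\delta_{m_1,\cdots,m_{k-1}}$, and they combine to produce $(1+2z)\delta_{m_1,\cdots,m_{k-1}}(z)=\tfrac{1+2z}{2}\E_{m_1,\cdots,m_{k-1}}(z)$. The remaining subcase $(0,1)$ calls for an off-diagonal rank-two sweep at positions $(k-1,n)$ and $(n,k-1)$, where $n$ is the final index. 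After excising the $(k-1)$-th and $n$-th rows/columns, what remains has precisely the shape of $\Lambda_{m_1,\cdots,m_{k-1}}^{X'',Y'',Z''}$, and exactly $m_{k-1}+3$ binary parameters---namely $x_{k-1}$, $y_{k-2}$, and $z_{k-1,0},z_{k-1,1},\cdots,z_{k-1,m_{k-1}}$---decouple, producing the prefactor $2^{m_{k-1}+3}z^2$ multiplying $\lambda_{m_1,\cdots,m_{k-1}}(z)$.

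The main obstacle will be the bookkeeping in this last subcase: one must verify that the rank-two sweep really produces a matrix of the precise $\Lambda_{m_1,\cdots,m_{k-1}}$ shape (so that the missing vector $\textbf{z}_{k-1}$ plays the correct role and the top block collapses to $\mathcal{L}_{k-1}^{\tilde{\textbf{x}}_0,\tilde{\textbf{y}}_0}$ with a zero $(k-1)$-th row/column), and that the count of genuinely free parameters is indeed $m_{k-1}+3$. Once the structural match is pinned down, the remaining $\mathbb{F}_2$ Gaussian elimination is routine and entirely parallel to the argument in Lemma \ref{11-1}.
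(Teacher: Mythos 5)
Your proposal is correct and follows essentially the same route as the paper: the same four-way case analysis on $(x_{k,m_k},y_{k,m_k-1})$ for the recurrence, with the initial conditions at $m_k=0,1$ established by reducing the bordered matrix back to $\Delta_{m_1,\cdots,m_{k-1}}$ and $\Lambda_{m_1,\cdots,m_{k-1}}$. The only difference is that the paper dismisses the initial conditions as ``direct calculation,'' whereas you spell out the subcase analysis on $(x_{k,1},z_{k-1,m_{k-1}+1})$ and the count of $m_{k-1}+3$ freed parameters, which matches the stated formula.
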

\begin{proof}
 There are $4$ different combinations of values for the variables $(x_{k,m_k},y_{k,m_k-1}):$
\begin{center}
\begin{tabular}{|c|c|}
\hline Cases  & Contributions to $ \lambda_{m_1,m_2,\cdots,m_k}(z)$\\
\hline  $1:(x_{k,m_k},y_{k,m_k-1})=(0,0)  $&$\lambda_{m_1,m_2,\cdots,m_{k-1},m_k-1}(z)  $\\
\hline $2: (x_{k,m_k}, y_{k,m_k-1})=(1,0)  $&$z\lambda_{m_1,m_2,\cdots,m_{k-1},m_k-1}(z)  $ \\
\hline $3: (x_{k,m_k}, y_{k,m_k-1})=(0,1)  $&$4z^2\lambda_{m_1,m_2,\cdots,m_{k-1},m_k-2}(z) $\\
\hline $4:(x_{k,m_k}, y_{k,m_k-1})=(1,1)  $&$z \lambda_{m_1,m_2,\cdots,m_{k-1},m_k-1}(z)  $
\\ \hline
\end{tabular}
\end{center}
Combining cases 1-4, (\ref{1-8}) holds with $m_k\geq 3$.

By direct calculating, one sees  that
\begin{eqnarray*}
\lambda_{m_1,m_2,\cdots,m_{k-1},1}(z) &=& (1+2z)\delta_{m_1,\cdots,m_{k-1}}(z)+2^{m_{k-1}+3}z^2\cdot \lambda_{m_1,\cdots,m_{k-1}}(z)
\\ &=& \frac{1+2z}{2}\E_{{m_1,\cdots,m_{k-1}}}(z)+2^{m_{k-1}+3}z^2\cdot \lambda_{m_1,\cdots,m_{k-1}}(z),
\\
\lambda_{m_1,m_2,\cdots,m_{k-1},2}(z)&=&(1+2z) \lambda_{m_1,m_2,\cdots,m_{k-1},1}(z)+4z^2 \delta_{m_1,\cdots,m_{k-1}}(z)
\\ &=&(1+2z) \lambda_{m_1,m_2,\cdots,m_{k-1},1}(z)+2z^2 \E_{{m_1,\cdots,m_{k-1}}}(z)
\end{eqnarray*}
which implies  (\ref{1-8}) holds with $m_k=2$, that is the initial conditions hold.
\end{proof}

We  have the following   recurrence   relation for   $\E_{{m_1,\cdots,m_k}}(z)$.
\begin{theorem}
\label{5-1}
  The polynomial  $\E_{{m_1,\cdots,m_k}}(z)(m_1,\cdots, m_{k-1}\geq 1,m_k\geq 2 )$ satisfies  the recurrence relation
  \begin{eqnarray}
 \label{1-1}
  \begin{split}
   \E_{{m_1,\cdots,m_k}}(z)&= (1+4z)\E_{{m_1,\cdots,m_{k-1},m_k-1}}(z)+16z^2 \E_{{m_1,\cdots,m_{k-1},m_k-2}}(z)
    \\ &+2^{m_k+3}z^2\lambda_{m_1,\cdots,m_{k-1},m_k-1}(z)
    \end{split}
  \end{eqnarray}
  with the initial conditions $\E_{{m_1,\cdots,m_{k-1},0}}(z)=\E_{{m_1,\cdots,m_{k-1}+1}}(z),
 \E_{{m_1,\cdots,m_{k-1},1}}(z) =\E_{{m_1,\cdots,m_{k-1}+2}}(z).$
\end{theorem}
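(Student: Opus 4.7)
The plan is to mimic the proof of Theorem \ref{11-3}, which handles the case $k=2$, and lift it to general $k$ by case analysis on the three variables $(x_{k,m_k}, y_{k,m_k-1}, z_{k,m_k})$ that appear in the last row/column of the overlap matrix $\Delta_{m_1,\cdots,m_k}^{X,Y,Z}$. By Theorem \ref{aa-1}, it suffices to derive the analogous recurrence for $\delta_{m_1,\cdots,m_k}(z)$ and then multiply through by $2$.

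First, I would observe that the last row and column of $\Delta_{m_1,\cdots,m_k}^{X,Y,Z}$ corresponding to the edge $e_{k,m_k}$ contain only three nonzero entries: the diagonal entry $x_{k,m_k}$, the off-diagonal entry $y_{k,m_k-1}$ (linking to $e_{k,m_k-1}$), and the off-diagonal entry $z_{k,m_k}$ (linking to $e_k$). I would split into the eight Boolean combinations of $(x_{k,m_k},y_{k,m_k-1},z_{k,m_k})$ exactly as in the proof of Theorem \ref{11-3}. In each case, elementary row/column operations over $\mZ_2$ (clearing the last row and column) reduce the rank computation to that of a smaller matrix, and the bookkeeping of signs/contributions yields:
\begin{itemize}
\item $(0,0,0)$ contributes $\delta_{m_1,\cdots,m_{k-1},m_k-1}(z)$;
\item $(1,0,0),(1,1,0),(1,0,1),(1,1,1)$ each contribute $z\,\delta_{m_1,\cdots,m_{k-1},m_k-1}(z)$;
\item $(0,1,0)$ and $(0,1,1)$ each contribute $8z^2 \delta_{m_1,\cdots,m_{k-1},m_k-2}(z)$;
\item $(0,0,1)$ contributes $2^{m_k+2} z^2 \lambda_{m_1,\cdots,m_{k-1},m_k-1}(z)$.
\end{itemize}
The delicate case is the last one: when $x_{k,m_k}=y_{k,m_k-1}=0$ but $z_{k,m_k}=1$, one eliminates the last row using the $k$-th row of $\Delta$, but this action propagates along the $k$-th row, forcing the $k$-th row and $k$-th column of the reduced matrix to become zero, which is precisely why the matrix $\Lambda_{m_1,\cdots,m_{k-1},m_k-1}^{X,Y,Z}$ (and hence the polynomial $\lambda$) appears; the factor $2^{m_k+2}$ arises because the remaining entries in the $k$-th column above the reduced position become free bits whose contribution must be counted. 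Summing the eight contributions and invoking Theorem \ref{aa-1} yields \eqref{1-1}.

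For the initial conditions, I would appeal to Propositions \ref{x-13} and \ref{2-12}, which give the graph isomorphisms $H_{m_1,\cdots,m_{k-1},0}\cong H_{m_1,\cdots,m_{k-1}+1}$ and $H_{m_1,\cdots,m_{k-1},1}\cong H_{m_1,\cdots,m_{k-1}+2}$; the Euler-genus polynomial is an invariant of the graph, so the stated initial conditions follow immediately.

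The main obstacle will be justifying the factor $2^{m_k+2}$ in the $(0,0,1)$ case cleanly; in particular, one must argue carefully that after using the $k$-th row to clear the $z_{k,m_k}$ entry, the resulting $(k,*)$-entries become a collection of free $\mZ_2$-parameters whose count does not affect the rank but does multiply the enumeration by the correct power of two, while simultaneously the reduced overlap matrix turns into exactly one of the form $\Lambda_{m_1,\cdots,m_{k-1},m_k-1}^{X,Y,Z}$ enumerated by $\lambda_{m_1,\cdots,m_{k-1},m_k-1}(z)$. Everything else is a routine case tabulation parallel to the $k=2$ computation already carried out in Theorem \ref{11-3}.
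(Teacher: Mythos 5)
Your proposal reproduces the paper's own argument: the same eight-case analysis on $(x_{k,m_k},y_{k,m_k-1},z_{k,m_k})$ with identical contributions (including the key $(0,0,1)$ case yielding $2^{m_k+2}z^2\lambda_{m_1,\cdots,m_{k-1},m_k-1}(z)$ after row/column operations annihilate the $k$-th row and column, which is exactly how $\Lambda$ and the $m_k+2$ free parameters enter), followed by doubling via Theorem \ref{aa-1} and obtaining the initial conditions from the isomorphisms in Propositions \ref{x-13} and \ref{2-12}. This is essentially the paper's proof, so no further comparison is needed.
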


Before the  proof of Theorem  \ref{5-1},
  we explain here  how     to calculate  $\E_{{m_1,\cdots,m_k}}(z)$.
 \begin{remark}
  Fix  $m_1,\cdots,m_k.$
    We will  use iteration to  calculate
     \begin{eqnarray}
     \label{x-1}
     \lambda_{m_1,\cdots,m_{k-1},m_k}(z),~~\cE_{m_1,\cdots,m_{k-1}, m_k}(z),~~\cE_{m_1,\cdots,m_{k-1},m_k+1}(z),~~\cE_{m_1,\cdots,m_{k-1},  m_k+2}(z).
   \end{eqnarray}

 First,   in  Lemma \ref{11-1} and Theorem  \ref{11-3}, we get  the values of
   \begin{eqnarray*}
     \lambda_{m_1,m_2}(z),~~\cE_{m_1,m_2}(z),~~\cE_{m_1,m_2+1}(z),~~\cE_{m_1,m_2+2}(z).
   \end{eqnarray*}
   Assume that we have already get the values of (\ref{x-1}) for $k\leq \ell-1.$
 Using   Lemma \ref{5-2},  the values of  $\lambda_{m_1,\cdots,m_{\ell-1},i}(z), 0\leq i \leq m_\ell$ are obtained.  Then, by  Theorem  \ref{5-1}, one arrives at  the values of
      \begin{eqnarray*}
     \cE_{m_1,\cdots,m_{\ell-1}, m_{\ell}}(z),~~\cE_{m_1,\cdots,m_{\ell-1},m_{\ell}+1}(z),~~\cE_{m_1,\cdots,m_{\ell-1},  m_{\ell}+2}(z).
   \end{eqnarray*}
By the above arguments, we  can  get the explicit expression for $\cE_{m_1,\cdots,m_{k-1}, m_k}(z)$.
 \end{remark}

\subsection{Proof of Theorem  \ref{5-1}}

\begin{proof}
Noting   $H_{m_1,\cdots,m_{k-1},0}=H_{m_1,\cdots,m_{k-1}+1} $  and  Proposition\ref{2-11}-Proposition \ref{2-12}, one sees  that  the initial conditions hold.
So, we only need to prove  (\ref{1-1}).

Consider  the overlap matrix  $\Delta_{m_1,\cdots,m_k}^{X,Y,Z}$ of
 graph $H_{m_1,\cdots,m_k}$.
 There are the following  eight different combinations of values for the variables  $x_{k,m_k}, y_{k,m_k-1}$ and $z_{k,m_k}$.

Case 1: $(x_{k,m_k}, y_{k,m_k-1},z_{k,m_k})=(0,0,0)$. One   easily  sees  that this case contributes to  $\delta_{m_1,m_2,\cdots,m_k}(z)$ by a term  $\delta_{m_1,m_2,\cdots,m_k-1}(z).$

Case 2:$(x_{k,m_k}, y_{k,m_k-1}, z_{k,m_k})=(0,0,1)$.
If  $z_{k,0}=1,$ we add row $-1$ to row $-(m_k+1)$ and then add column $-1$ to column$-(m_k+1)$ . Here row(column)  $-\ell$ denotes the  row $\ell$ counting  from the last row(column).
Making  similar operations  for   $z_{k,1},\cdots, z_{k,m_k-1}$ and  $x_k,y_{k-1},$
the    resulting matrix is given by
\renewcommand\arraystretch{1.2}
\setlength{\arraycolsep}{1.20pt}
$$
\left[
    \begin{array}{ccccc}
\tiny{
     \begin{array}{cccccc}
&&&& &  0
\\ &&&& & 0
\\ && \normalsize{\text{$\mathcal{L}_{k-1}^{\textbf{x}_0,\textbf{y}_0}$}}   & &  & \vdots
\\ &&  &&&  0
\\ &&&& & 0
\\ 0 & 0 &  \cdots & 0  & \text{~~~~~~~~$0$~~~~~~~}& ~~0~~
\end{array}
}
  & \normalsize{\text{$M^{T}_1$}}&  \cdots  &\normalsize{\text{$M^{T}_{k-1}$}} &
  \tiny{
  \begin{array}{ccccccc}
  0 &  0 &  \cdots & 0 &0 &0
     \\  0 & 0& ~~\cdots~~ & 0& 0  &0
       \\  \vdots &\vdots& ~~\vdots~~ &\vdots &\vdots &\vdots
             \\  0 & 0  &\cdots &0 & 0  &0
       \\  z_{k-1,m_{k-1}+1} & ~~0~~  &\cdots &0 & 0  &0
       \\  0 & ~~0~~ &\cdots &\text{~~~~$0$~~~~} & \text{~~~~$0$~~~~} &\text{~~$1$~~}
\end{array}
}
 \\
 \normalsize{\text{$M_1$}} &   \normalsize{\text{$\mathcal{L}$}}_{m_1}^{\textbf{x}_1,\textbf{y}_1}&&
 \\ \vdots  && \ddots & &  \normalsize{\textbf{0}}
 \\
 \\ \normalsize{\text{$M_{k-1}$}}  &&& \normalsize{\text{$\mathcal{L}$}}_{m_{k-1}}^{\textbf{x}_{k-1},\textbf{y}_{k-1}}&
 \\
 \tiny{
\begin{array}{cccccc}
  0 & 0&  ~~~\cdots~~~ & 0 & z_{k-1,m_{k-1}+1}  &\text{~~$0$~~}
     \\  0 & 0& \cdots & 0& 0  &0
       \\  \vdots&\vdots  &\vdots& \vdots &\vdots &\vdots
           \\  0  & 0 &\cdots &0 & 0  &0
       \\  0  & 0 &\cdots &0 & 0  &0
       \\  0  &0& \cdots &0 & 0  &1
\end{array}
}
 &    & \normalsize{\textbf{0}} & &
 \tiny{
\begin{array}{cccccc}
  \text{~~~~$x_{k,1}$~~~~~}&y_{k,1}&&&&
     \\   \text{~~~~$y_{k,1}$~~~~~} &x_{k,2}& ~~\ddots&&&
    \\   &\ddots~~& \ddots&  \ddots
    \\  && \ddots&\ddots~~& y_{k,m_k-2}&
       \\      & && y_{k,m_k-2}&  x_{k,m_k-1} &\text{~~$0$~~}
       \\   &  && &  0   &\text{~~$0$~~}
\end{array}
}
\end{array}
\right]
$$
Since there are $2^{m_k+2}$ choices of values for  $ z_{k,0},\cdots, z_{k,m_k-1}$ and $x_k,y_{k-1}$,  this case contributes a term
$2^{m_k+2}z^2\lambda_{m_1,m_2,\cdots,m_{k-1},m_k-1}(z).$

  Case 3:  $(x_{k,m_k}, y_{k,m_k-1},z_{k,m_k})=(1,0,0)$.
  This case contributes  a term   $z\delta_{m_1,m_2,\cdots,m_k-1}(z).$

   Case 4: $(x_{k,m_k}, y_{k,m_k-1},z_{k,m_k})=(0,1,0)$.
If  $z_{k,m_k-1}=1,$ we add row $-1$ to row $k$,
then we add the column $-1$  to the  column $k$.
We make    similar operations  for   $x_{k,m_k-1},y_{k,m_k-2}$
 and  get  the following matrix.

\renewcommand\arraystretch{1.30}
\setlength{\arraycolsep}{1.30pt}
$$
\tiny{
\left[
    \begin{matrix}
     \begin{matrix}
&&&& &  0
\\ &&&& & 0
\\ &&\normalsize{\text{$\mathcal{L}_{k-1}^{\textbf{x}_0,\textbf{y}_0}$}}  && & \vdots
\\ &&  &&&  0
\\ &&&& & y_{k-1}
\\ 0 & 0 &  \cdots & \text{0~~}  &\text{~~~~$y_{k-1}$~~~~}& x_k
\end{matrix}
  & \normalsize{\text{$M^{T}_1$}}&  \cdots  &\normalsize{\text{$M^{T}_{k-1}$}} &
  \begin{matrix}
  0 &  0 &  \cdots & 0 &0 &0
     \\  0 & 0& \cdots & 0& 0  &0
       \\  \vdots &\vdots& \vdots &\vdots &\vdots &\vdots
             \\  0 & 0  &\cdots &0 & 0  &0
       \\  z_{k-1,m_{k-1}+1} & 0  &\cdots &0 & 0  &0
       \\  z_{k,1} & z_{k,2} &~~~\cdots~~~ &z_{k,m_k-2} & \text{~~~~$0$~~~~}   &\text{~~~~$0$~~~~}
\end{matrix}
 \\
 \normalsize{\text{$M_1$}} &   \normalsize{\text{$\mathcal{L}$}}_{m_1}^{\textbf{x}_1,\textbf{y}_1}&&
 \\ \vdots  && \ddots & &  \normalsize{\textbf{0}}
 \\ \normalsize{\text{$M_{k-1}$}}  &&& \normalsize{\text{$\mathcal{L}$}}_{m_{k-1}}^{\textbf{x}_{k-1},\textbf{y}_{k-1}}&
 \\
 \\
\begin{matrix}
  0 & 0&  ~~~\cdots~~~ & 0 &\tiny{z_{k-1,m_{k-1}+1}} &~~z_{k,1}~~
     \\  0 & 0& \cdots & 0& 0  &z_{k,2}
       \\  \vdots&\vdots  &\vdots& \vdots &\vdots &\vdots
           \\  0  & 0 &\cdots &0 & 0  &z_{k,m_k-2}
       \\  0  & 0 &\cdots &0 & 0  &0
       \\  0  &0& \cdots &0 & 0  &0
\end{matrix}
 &    & \normalsize{\textbf{0}} & &
\begin{matrix}
  \text{~~~~~~$x_{k,1}$~~~~~~}&y_{k,1}&&&&
     \\   y_{k,1} &x_{k,2}& \ddots&&&
     \\  &\ddots~&\ddots &y_{k,m_k-3}&&
       \\      & &y_{k,m_k-3}&  x_{k,m_k-2} &\text{~~~$0$~~~~~}&
       \\   & &&\text{~~~$0$~~~~~~~}&\text{~~~$0$~~~~~}   &\text{~~~$1$~~~~~~~}
       \\   &  && & \text{~~~$1$~~~~~}   &\text{~~~$0$~~~~~~~}
\end{matrix}
\end{matrix}
\right]
}
$$
Since there are $2^3$  different choices  of values for the  variables  $x_{k,m_k-1},y_{k,m_k-2},z_{k,m_k-1},$  it   contributes  a term  $8z^2\delta_{m_1,m_2,\cdots,m_{k-1},m_k-2}(z).$

Case 5: $(x_{k,m_k}, y_{k,m_k-1},z_{k,m_k})=(0,1,1)$.
We  add row $-2$ to row $k$ and  add column $-2$ to column   $k$.
We get the following matrix.

\renewcommand\arraystretch{1.20}
\setlength{\arraycolsep}{1.20pt}
{$$
\tiny{
\left[
    \begin{matrix}
     \begin{matrix}
&&&& &&   0
\\ &&&& & & 0
\\ &&&& & & 0
\\ &&& \normalsize{\text{$\mathcal{L}_{k-1}^{\textbf{x}_0,\textbf{y}_0}$}}  & & & \vdots
\\ &&  &&& &  0
\\ &&&& &&  y_{k-1}
\\ ~0~ & ~0~ &  ~0~& \cdots & \text{0}  &\text{~~~~~$y_{k-1}$~~~~~}& \text{~~$x_k$~~}
\end{matrix}
  & \normalsize{\text{$M^{T}_1$}}&  \cdots  &\normalsize{\text{$M^{T}_{k-1}$}} &
  \begin{matrix}
  0 &  0 &  \cdots & 0 &0 &0 & 0
  \\ 0 &  0 &  \cdots & 0 &0 &0 & 0
     \\  0 & 0& \cdots &0&  0& 0  &0
       \\  \vdots &\vdots& \vdots &\vdots &\vdots &\vdots & \vdots
             \\  0 & 0  &\cdots & 0&0 & 0  &0
       \\  z_{k-1,m_{k-1}+1} & 0  &\cdots & 0 &0 & 0  &0
       \\  z_{k,1} & z_{k,2} &\cdots & z_{k,m_k-3}&z_{k,m_k-2}^*  &z_{k,m_k-1}^* &\text{~~~~$0$~~~~}
\end{matrix}
 \\
\normalsize{\text{$M_1$}} &   \normalsize{\text{$\mathcal{L}$}}_{m_1}^{\textbf{x}_1,\textbf{y}_1}&&
 \\ \vdots  && \ddots & &  \normalsize{\textbf{0}}
 \\ \normalsize{\text{$M_{k-1}$}}  &&& \normalsize{\text{$\mathcal{L}$}}_{m_{k-1}}^{\textbf{x}_{k-1},\textbf{y}_{k-1}}&
 \\
 \\
\begin{matrix}
 ~ 0~ & ~0~& ~0~&   ~~~\cdots~~~ & 0 &\tiny{z_{k-1,m_{k-1}+1}} &z_{k,1}
     \\  0 & 0& 0&  \cdots & 0& 0  &z_{k,2}
       \\  \vdots&\vdots  &\vdots& \vdots &\vdots &\vdots &\vdots
        \\  0  & 0&0  &\cdots &0 & 0  &z_{k,m_k-3}
           \\  0  & 0 &0  &\cdots &0 & 0  &z_{k,m_k-2}^*
       \\  0  & 0 & 0&\cdots &0 & 0  &z_{k,m_k-1}^*
       \\  0  &0& 0&  \cdots &0 & 0  &0
\end{matrix}
 &    & \normalsize{\textbf{0}} & &
\begin{matrix}
  \text{~~~~$x_{k,1}$~~~}&y_{k,1}&&&&&
     \\    y_{k,1} &x_{k,2}&\ddots &&&&
    \\   &\ddots~~& \ddots& \ddots  &&&
     \\   && \ddots& \ddots &y_{k,m_k-3}&&
    \\  && &y_{k,m_k-3}&x_{k,m_k-2}&  y_{k,m_k-2}&
       \\      & && & y_{k,m_k-2}&  x_{k,m_k-1}& \text{~~~$1$~~~}
       \\   &  && & &   1  &\text{~~~$0$~~~}
\end{matrix}
\end{matrix}
\right]
}
$$}
where $z_{k,m_k-2}^*=z_{k,m_k-2}+y_{k,m_k-2} $ and $z_{k,m_k-1}^*=z_{k,m_k-1}+x_{k,m_k-1}.$
If  $y_{k,m_k-2}=1$, we  add row $-1$ to row $-3$ and then add column $-1$ to column$-3$.
We make   similar discussions  for   $x_{k,m_k-1}, z_{k,m_k-1}^*$
and  transform  the matrix  $\Delta_{m_1,\cdots,m_k}^{X,Y, Z}$ into the following form
\renewcommand\arraystretch{1.20}
\setlength{\arraycolsep}{1.20pt}
$$
\tiny{
\left[
    \begin{matrix}
     \begin{matrix}
&&&& &&   0
\\ &&&& & & 0
\\ &&&& & & 0
\\ &&& \normalsize{\text{$\mathcal{L}_{k-1}^{\textbf{x}_0,\textbf{y}_0}$}}  & & & \vdots
\\ &&  &&& &  0
\\ &&&& &&  y_{k-1}
\\ ~0~ & ~0~ &  ~0~& \cdots & \text{0}  &\text{~~~~~$y_{k-1}$~~~~~}& \text{~~$x_k$~~}
\end{matrix}
  & \normalsize{\text{$M^{T}_1$}}&  \cdots  &\normalsize{\text{$M^{T}_{k-1}$}} &
  \begin{matrix}
  0 &  0 &  \cdots & 0 &0 &0 & 0
  \\ 0 &  0 &  \cdots & 0 &0 &0 & 0
     \\  0 & 0& \cdots &0&  0& 0  &0
       \\  \vdots &\vdots& \vdots &\vdots &\vdots &\vdots & \vdots
             \\  0 & 0  &\cdots & 0&0 & 0  &0
       \\  z_{k-1,m_{k-1}+1} & 0  &\cdots & 0 &0 & 0  &0
       \\  z_{k,1} & z_{k,2} &\cdots & z_{k,m_k-3}&z_{k,m_k-2}^*  &0 &\text{~~~~$0$~~~~}
\end{matrix}
 \\
\normalsize{\text{$M_1$}} &   \normalsize{\text{$\mathcal{L}$}}_{m_1}^{\textbf{x}_1,\textbf{y}_1}&&
 \\ \vdots  && \ddots & &  \normalsize{\textbf{0}}
 \\ \normalsize{\text{$M_{k-1}$}}  &&& \normalsize{\text{$\mathcal{L}$}}_{m_{k-1}}^{\textbf{x}_{k-1},\textbf{y}_{k-1}}&
 \\
 \\
\begin{matrix}
 ~ 0~ & ~0~& ~0~&   ~~~\cdots~~~ & 0 &\tiny{z_{k-1,m_{k-1}+1}} &z_{k,1}
     \\  0 & 0& 0&  \cdots & 0& 0  &z_{k,2}
       \\  \vdots&\vdots  &\vdots& \vdots &\vdots &\vdots &\vdots
        \\  0  & 0&0  &\cdots &0 & 0  &z_{k,m_k-3}
           \\  0  & 0 &0  &\cdots &0 & 0  &z_{k,m_k-2}^*
       \\  0  & 0 & 0&\cdots &0 & 0  &0
       \\  0  &0& 0&  \cdots &0 & 0  &0
\end{matrix}
 &    & \normalsize{\textbf{0}} & &
\begin{matrix}
  \text{~~~~$x_{k,1}$~~~}&y_{k,1}&&&&&
     \\    y_{k,1} &x_{k,2}&\ddots &&&&
    \\   &\ddots~~& \ddots& \ddots  &&&
     \\   && \ddots& \ddots &y_{k,m_k-3}&&
    \\  && &y_{k,m_k-3}&x_{k,m_k-2}&  0&
       \\      & && & \text{$0$}&  \text{$0$} & \text{~~~$1$~~~}
       \\   &  && & &   1  &\text{~~~$0$~~~}
\end{matrix}
\end{matrix}
\right]
}
$$
Since there are $2^3$ possible choices of values for   $ x_{k,m_k-1},y_{k,m_k-2},z_{k,m_k-1}$,  this case  contributes   a term  $8z^2\delta_{m_1,m_2,\cdots,m_{k-1},m_k-2}(z).$

Case 6: $(x_{k,m_k}, y_{k,m_k-1},z_{k,m_k})=(1,1,0)$.
We  add row $-1$ to row $-2$ and  add column $-1$ to column$-2$.
Then, one  sees  that it   contributes   a term  $z\delta_{m_1,m_2,\cdots,m_{k-1},m_k-1}(z)$.

Case 7: $(x_{k,m_k}, y_{k,m_k-1},z_{k,m_k})=(1,0,1)$.
We  add row $-1$ to row $k$ and then add column $-1$ to column$k$.
it  contributes   a term  $z\delta_{m_1,m_2,\cdots,m_{k-1},m_k-1}(z)$.

Case 8: $(x_{k,m_k}, y_{k,m_k-1},z_{k,m_k})=(1,1,1)$.
 We  add row $-1$ to the rows  $-2,k$ and then add column $-1$ to the columns   $-2,k$ respectively.
Then,  it   contributes   a term  $z\delta_{m_1,m_2,\cdots,m_{k-1},m_k-1}(z)$.

Combining cases 1-8, we obtain
  \begin{eqnarray*}
  \begin{split}
    \delta_{m_1,m_2,\cdots,m_k}(z)&= (1+4z)\delta_{m_1,\cdots,m_{k-1},m_k-1}(z)+16z^2 \delta_{m_1,\cdots,m_{k-1},m_k-2}(z)
    \\ &+2^{m_k+2}z^2\lambda_{m_1,\cdots,m_{k-1},m_k-1}(z).
    \end{split}
  \end{eqnarray*}
  Combing the above equality with Theorem \ref{aa-1},  one gets    (\ref{1-1}).
\end{proof}

Recall that the definition of $\eps_{m_1,\cdots,m_k}(j)$ is given in subsection 1.3.
\begin{theorem}\label{torus}
For any $k\geq 2$ and positive integers  $m_1,\cdots,m_k,$  we have $\eps_{m_1,\cdots,m_k}(0)=2$
and $\eps_{m_1,\cdots,m_k}(1) =8(m_1+m_2+\cdots+m_k+k)-10.$
\end{theorem}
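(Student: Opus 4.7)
The plan is to extract the constant term and coefficient of $z$ in $\mathcal{E}_{m_1,\ldots,m_k}(z)$ by direct counting. By Theorem \ref{aa-1} one has $\mathcal{E}_{m_1,\ldots,m_k}(z)=2\delta_{m_1,\ldots,m_k}(z)$, so the task reduces to computing $\delta_{m_1,\ldots,m_k}[0]$ and $\delta_{m_1,\ldots,m_k}[1]$, i.e.\ to counting matrices of the form $\Delta^{X,Y,Z}_{m_1,\ldots,m_k}$ of rank $0$ and rank $1$ over $\mZ_2$.

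For rank $0$, the only matrix is the zero matrix, obtained by setting every entry of $X,Y,Z$ to $0$. Hence $\delta_{m_1,\ldots,m_k}[0]=1$ and $\varepsilon_{m_1,\ldots,m_k}(0)=2$.

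For rank $1$, I would use the standard fact that a symmetric $\mZ_2$-matrix $M$ has rank $1$ if and only if $M=uu^T$ for a unique non-zero vector $u$, namely $u=\operatorname{diag}(M)$; this follows because $M=xy^T$ with $M=M^T$ gives $xy^T=yx^T$, which over $\mZ_2$ forces $y=x$. Consequently $\delta_{m_1,\ldots,m_k}[1]$ equals the number of non-zero vectors $u$, indexed by the cotree edges of $H_{m_1,\ldots,m_k}$, for which $uu^T$ respects every structural zero of the template $\Delta^{X,Y,Z}_{m_1,\ldots,m_k}$. The non-structural-zero off-diagonal positions are exactly those corresponding to the variables $y_i$, $y_{i,\ell}$, $z_{i,0}$, $z_{i,\ell}$, $z_{i,m_i+1}$ identified before Theorem \ref{jin6}, so this is equivalent to requiring that the support of $u$ be a clique in the \emph{allowed-overlap graph} $\mathcal{G}$, whose edge set is read off directly from Propositions \ref{jin1}--\ref{jin4}.

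The remaining step is to count non-empty cliques in $\mathcal{G}$. A short inspection of neighborhoods shows that $\mathcal{G}$ contains no $K_4$: the spine $\{e_1,\ldots,e_k\}$ induces a path (so contains no triangle), and for any rung $e_{i,\ell}$ the three would-be neighbors $\{e_i,e_{i,\ell-1},e_{i,\ell+1}\}$ (or the boundary analogues) are never mutually adjacent. Thus only cliques of size $1$, $2$, $3$ occur. The size-$1$ count is $k+\sum_i m_i$; the edges of $\mathcal{G}$ split into the five families of overlap variables listed above and total $(k-1)+\sum_i m_i+(k-1)+(k-1)+(\sum_i m_i-k)=2\sum_i m_i+2k-3$; and the triangles split into the three families $\{e_i,e_{i+1},e_{i,m_i}\}$, $\{e_i,e_{i+1},e_{i+1,1}\}$, $\{e_i,e_{i,\ell},e_{i,\ell+1}\}$, totalling $(k-1)+(k-1)+(\sum_i m_i-k)=\sum_i m_i+k-2$. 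Summing these three counts gives $\delta_{m_1,\ldots,m_k}[1]=4\sum_i m_i+4k-5$, and Theorem \ref{aa-1} yields $\varepsilon_{m_1,\ldots,m_k}(1)=8(\sum_i m_i+k)-10$.

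The main point to verify carefully is the exhaustive enumeration of triangles and the absence of a $K_4$; this is a short case analysis according to how many of the three vertices of a potential clique lie in the spine $\{e_1,\ldots,e_k\}$ versus the rungs $\{e_{i,\ell}\}$, using the explicit neighborhoods dictated by Propositions \ref{jin1}--\ref{jin4}.
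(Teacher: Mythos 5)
Your proof is correct, but it takes a genuinely different route from the paper's. The paper proves the Euler-genus-one formula by induction on $k$: it first extracts $\phi_n[1]=4n-5$ from Proposition \ref{2-13}, uses Theorem \ref{11-3} to get $\eps_{m_1,m_2}(1)=\eps_{m_1,m_2-1}(1)+8$ with $\eps_{m_1,1}(1)=2\phi_{m_1+3}[1]$ for the base case $k=2$, and then iterates the recurrence of Theorem \ref{5-1} together with the initial condition $\eps_{m_1,\ldots,m_{k-1},1}(1)=\eps_{m_1,\ldots,m_{k-1}+2}(1)$. You instead bypass all the recurrences of Section 3 and count low-rank overlap matrices directly: the observation that a symmetric rank-one matrix over $\mZ_2$ must equal $uu^T$ for a unique nonzero $u$ (so its diagonal determines it) correctly reduces $\delta_{m_1,\ldots,m_k}[1]$ to counting nonempty cliques in the allowed-overlap graph read off from Propositions \ref{jin1}--\ref{jin4}, and since each overlap variable occupies its own position in $\Delta^{X,Y,Z}_{m_1,\ldots,m_k}$, distinct admissible $u$ give distinct matrices, so the count is exactly what Theorem \ref{aa-1} needs. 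Your vertex, edge and triangle counts ($k+\sum_i m_i$, $2\sum_i m_i+2k-3$, $\sum_i m_i+k-2$) and the absence of $K_4$ are all correct, and they reproduce $\delta[1]=4\sum_i m_i+4k-5$, hence $\eps(1)=8(\sum_i m_i+k)-10$; as a check, for $H_{1,1}$ this gives $\delta_{1,1}[1]=4+5+2=11$, matching $\eps_{1,1}(1)=22$ in Example 5.1. What your approach buys is a structural explanation of the coefficients (planar embeddings correspond to the zero matrix, Euler-genus one to cliques of size at most three), independence from the Ringel-ladder machinery, and manifest symmetry in the $m_i$; the mild cost is that the triangle enumeration and no-$K_4$ claim, which you only sketch, need the short case analysis written out, and that the paper's inductive route extends more mechanically to $\eps(2)$ (Theorem \ref{doubletorus}), where a direct rank-two classification would be considerably more involved.
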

 \label{Q-1}
\begin{proof}  For any $m_1,\cdots,m_k\geq 1,$ obviously, we have  $\eps_{m_1,\cdots,m_k}(0)=2.$   We use  induction on $k$ to  prove
  \begin{eqnarray}
    \label{1-21}
     \eps_{m_1,\cdots,m_k}(1)& =&8(m_1+m_2+\cdots+m_k+k)-10.
  \end{eqnarray}

 First, we claim that,  for $k=2,$ (\ref{1-21}) holds.
 By Proposition \ref{2-13}, one sees  that
 \begin{eqnarray*}
 \left\{
 \begin{split}
   \phi_{n+1}[1]&=\phi_n[1]+4,
   \\
   \phi_2[1]&=3
   \end{split}
   \right.
 \end{eqnarray*}
 which yields
 \begin{eqnarray}
 \label{V-3}
 \phi_{n}[1] =3+4(n-2)=4n-5.
 \end{eqnarray}
 Furthermore, using   Theorem \ref{2-1}, we have
 \begin{eqnarray*}
   \eps_{m_1,m_2}(1)=\eps_{m_1,m_2-1}(1)+8,
   \quad  \eps_{m_1,1}(1)=2\phi_{m_1+3}[1] =8m_1+14,
 \end{eqnarray*}
 which  implies that
$ \eps_{m_1,m_2}(1)=\eps_{m_1,1}(1)+8(m_2-1)=8m_1+14+8(m_2-1)=8(m_1+m_2+2)-10.
 $ Therefore, (\ref{1-21}) holds with $k=2.$

 Assume    (\ref{1-21}) holds  for any $m_1,\cdots,m_{k-1}\geq 1$.
 By  Theorem \ref{5-1}, one easily sees that
 \begin{eqnarray*}
 \left\{
 \begin{split}
   \eps_{m_1,\cdots,m_{k-1},m_k}(1)&= \eps_{m_1,\cdots,m_{k-1},m_k-1}(1)+8,
   \\
   \eps_{m_1,\cdots,m_{k-1},1}(1)&=\eps_{m_1,\cdots,m_{k-1}+2}(1).
   \end{split}
   \right.
 \end{eqnarray*}
Furthermore, by induction, we have
 \begin{eqnarray*}
    && \eps_{m_1,\cdots,m_{k-1},m_k}(1) = \eps_{m_1,\cdots,m_{k-1},1}(1)+8(m_k-1)
   = \eps_{m_1,\cdots,m_{k-1}+2}(1) +8(m_k-1)
    \\ && = 8(m_1+\cdots+m_{k-2}+m_{k-1}+2+k-1)-10+8(m_k-1)
    \\ && = 8(m_1+\cdots+m_{k-2}+m_{k-1}+m_k+k)-10
 \end{eqnarray*}
  which  completes   the proof  of    (\ref{1-21}).
\end{proof}
With  a similar method, one has the following results.
\begin{theorem}\label{doubletorus}
For any  $k\geq 2$ and  $m_1,\cdots,m_k\geq 1,$  $ \eps_{m_1,\cdots,m_k}(2)=8 \sum_{i=1}^k\Big(-9 + 4 i + (-3 + 4 i) m_i + 2 m_i^2   \Big)+2\sum_{i=2}^k ( 2^{m_i+3}+3\cdot 2^{m_{i-1}+3} )-2^{m_1+5}+32\sum_{i=2}^k \sum_{j=1}^{i-1}(m_jm_i+m_j).$
\end{theorem}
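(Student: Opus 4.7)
The strategy mirrors the proof of Theorem \ref{torus} but one degree higher in $z$. I would compare $z^2$-coefficients in the recurrence (\ref{1-1}) of Theorem \ref{5-1} and $z^0$-coefficients in the recurrence (\ref{1-8}) of Lemma \ref{5-2}. The second immediately gives $\lambda_{m_1,\cdots,m_k}[0]=\lambda_{m_1,\cdots,m_{k-1},0}[0]=\tfrac12\eps_{m_1,\cdots,m_{k-1}}(0)=1$. Substituting this constant, together with $\eps(0)=2$ and the linear formula for $\eps(1)$ from Theorem \ref{torus}, into the first reduces (\ref{1-1}) to the scalar recurrence
\begin{equation*}
\eps_{m_1,\cdots,m_k}(2) - \eps_{m_1,\cdots,m_{k-1},m_k-1}(2) = 32(m_1+\cdots+m_{k-1}) + 32 m_k + 32k - 40 + 2^{m_k+3},
\end{equation*}
valid for $m_k\geq 2$.

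Next I would telescope this recurrence in $m_k$ down to $m_k=1$, apply the initial-condition identity $\eps_{m_1,\cdots,m_{k-1},1}(2) = \eps_{m_1,\cdots,m_{k-1}+2}(2)$ from Theorem \ref{5-1} to drop the length by one, and then induct on $k$. Telescoping contributes the polynomial block $8(-9+4k+(-3+4k)m_k+2m_k^2)$, the cross term $32m_k(m_1+\cdots+m_{k-1})$, and the geometric part $\sum_{j=2}^{m_k}2^{j+3}=2^{m_k+4}-32$. The base case $k=2$ is handled by the identical $z^2$-extraction applied to Theorem \ref{11-3}, once $\phi_m[2]$ is derived from Proposition \ref{2-13} via $L_{m-1}[0]=1$, $\phi_m[0]=1$ and $\phi_m[1]=4m-5$ (the latter already appearing in the proof of Theorem \ref{torus}).

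The main obstacle is the bookkeeping of the exponential block. At a generic level $i$ the telescope produces a contribution of order $2^{m_i+4}$, but the initial-condition shift $(m_1,\cdots,m_{k-1},1)\mapsto(m_1,\cdots,m_{k-1}+2)$ subsequently boosts each intermediate $m_i$ by two, converting $2^{m_i+4}$ into $2^{m_i+6}$ for every interior index $2\leq i\leq k-1$ and thereby explaining the combination $2\sum_{i=2}^{k}(2^{m_i+3}+3\cdot 2^{m_{i-1}+3})$. The anomalous correction $-2^{m_1+5}$ is a base-case artifact: at $k=2$ the exponential contribution to $\phi_{m_1+3}[2]$ comes from the $2^{m} z^2 L_{m-1}(z)$ term of Proposition \ref{2-13} rather than from the $\lambda[0]$-mechanism, and this single discrepancy is exactly what the $-2^{m_1+5}$ absorbs. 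Once this anomaly is identified and isolated, the remaining verification that the telescoped sum plus the inductive hypothesis assembles into the announced closed form is routine polynomial algebra.
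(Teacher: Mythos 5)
Your proposal is correct and takes essentially the same route as the paper, whose proof of Theorem \ref{doubletorus} is just the ``similar method'' to Theorem \ref{torus}: extract the $z^2$-coefficient from (\ref{1-1}) using $\lambda_{m_1,\cdots,m_{k-1},m_k-1}[0]=1$, $\eps(0)=2$ and the formula for $\eps(1)$, telescope in $m_k$, apply the initial-condition shift $\eps_{m_1,\cdots,m_{k-1},1}(2)=\eps_{m_1,\cdots,m_{k-1}+2}(2)$, and induct on $k$, with the $k=2$ base case obtained from Theorem \ref{11-3} and $\phi_m[2]=8m^2-12m-16+2^m$ derived from Proposition \ref{2-13}. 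Your displayed scalar recurrence and the exponential bookkeeping (including reading the $-2^{m_1+5}$ as the base-case correction, since the total exponential part equals $2^{m_k+4}+\sum_{i=2}^{k-1}2^{m_i+6}+2^{m_1+4}$) do assemble into the stated closed form.
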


\section{ The generating functions  for graphs $H_{m_1,\cdots,m_k}.$ }

For $i=0,1,2,3$, we define
\begin{eqnarray*}
   \lambda^i(t_1,t_2,\cdots,t_k,z)&=&\sum_{m_1,m_2,\cdots,m_{k-1}\geq 1,m_k\geq i}2^{m_k} \lambda_{m_1,m_2,\cdots,m_k}(z)t_1^{m_1}t_2^{m_2}\cdots t_k^{m_k}
     \\
  \E^i(t_1,t_2,\cdots,t_k,z)&=&\sum_{m_1,m_2,\cdots,m_{k-1}\geq 1, m_k\geq i} \E_{m_1,m_2,\cdots,m_k}(z)t_1^{m_1}t_2^{m_2}\cdots t_k^{m_k}
    \end{eqnarray*}
and
$
L^* (t,z)= \sum_{m\geq 1}2^m L_m(z)t^m.
$

In subsection \ref{10-4}, we give the computations of $\lambda^1(t_1,t_2,z),\lambda^2(t_1,t_2,z), \E^1(t_1,t_2,z),$ and $\E^2(t_1,t_2,z)$.
In subsection \ref{10-1}, we  list  some lemmas.
These lemmas demonstrate the relations between  $$ \lambda^i(t_1,t_2,\cdots,t_k,z),i=0,1,2,~~~\E^i(t_1,t_2,\cdots,t_k,z),i=0,1,2,3.$$
 Using  these lemmas  in subsections  \ref{10-4},\ref{10-1},     we   compute the following    functions in subsection \ref{10-2},
  $$
  \lambda^1(t_1,\cdots, t_k,z),\lambda^2(t_1,t_2,\cdots,t_k,z), \E^1(t_1,t_2,\cdots,t_k,z),\E^2(t_1,\cdots, t_k,z).
  $$

\subsection{The computations of $\lambda^1(t_1,t_2,z),\lambda^2(t_1,t_2,z), \E^1(t_1,t_2,z),\E^2(t_1,t_2,z)$}
\label{10-4}

First, one easily sees   the following lemma hold.
\begin{lemma}
\label{pz-10}
one has
$L^* (t,z)=\frac{8z^2t^2+2t(1+z)}{-16z^2t^2-(4z+2)t+1}.$
\end{lemma}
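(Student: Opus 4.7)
The plan is to reduce the claim to a routine generating-function manipulation of the two-term recurrence for $L_m(z)$ given in Proposition \ref{pp-2}. Write $L_m^*(z) := 2^m L_m(z)$, so that $L^*(t,z) = \sum_{m\geq 1} L_m^*(z)\, t^m$. Multiplying the recurrence
$L_m(z) = (1+2z) L_{m-1}(z) + 4z^2 L_{m-2}(z)$ by $2^m$ rescales it to
\[
L_m^*(z) \;=\; (2+4z)\, L_{m-1}^*(z) \;+\; 16 z^2\, L_{m-2}^*(z), \qquad m \geq 3,
\]
with initial data $L_1^*(z) = 2(1+z)$ and $L_2^*(z) = 4(1+3z+4z^2)$.

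Next, I would multiply this rescaled recurrence by $t^m$ and sum over $m \geq 3$. The left-hand side becomes $L^*(t,z) - L_1^*(z)\,t - L_2^*(z)\,t^2$, the first term on the right becomes $(2+4z)\,t\,\bigl(L^*(t,z) - L_1^*(z)\,t\bigr)$, and the second becomes $16 z^2\, t^2\, L^*(t,z)$. Collecting the $L^*(t,z)$ terms gives
\[
\bigl[1 - (2+4z) t - 16 z^2 t^2\bigr]\, L^*(t,z) \;=\; L_1^*(z)\, t \;+\; \bigl(L_2^*(z) - (2+4z) L_1^*(z)\bigr)\, t^2.
\]

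Finally, I would simplify the numerator: with $L_1^*(z) = 2(1+z)$ and $L_2^*(z) = 4 + 12z + 16z^2$, a short computation gives $L_2^*(z) - (2+4z)L_1^*(z) = 8 z^2$, so the numerator is $2(1+z)\, t + 8 z^2\, t^2$, and dividing produces exactly the stated closed form. There is no real obstacle here; the only thing to watch is the arithmetic in the numerator simplification, and the fact that the denominator coefficients in the lemma are written in the opposite sign convention (the fraction $\frac{8z^2 t^2 + 2t(1+z)}{-16 z^2 t^2 - (4z+2) t + 1}$ is merely a cosmetic rewriting of $\frac{2(1+z) t + 8 z^2 t^2}{1 - (2+4z) t - 16 z^2 t^2}$).
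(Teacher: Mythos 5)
Your derivation is correct: rescaling the recurrence of Proposition \ref{pp-2} by $2^m$, summing against $t^m$, and simplifying the numerator (indeed $L_2^*(z)-(2+4z)L_1^*(z)=8z^2$) yields exactly the stated closed form. The paper omits any proof of Lemma \ref{pz-10} ("one easily sees"), and your argument is precisely the routine generating-function computation it implicitly relies on, so there is nothing different to compare.
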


 The  expressions of   $(\lambda^1(t_1,t_2,z),\lambda^2(t_1,t_2,z))$ and  $(\E^1(t_1,t_2,z),\E^2(t_1,t_2,z))$ are given by the following two lemmas .
\begin{lemma}\label{lema:add}
We have
\begin{eqnarray*}
\lambda^1(t_1,t_2,z)&=&\frac{(2t_2+4zt_2+16z^2t_2^2)t_1^{-1}\phi({t_1,z})
+4z^2t_2L^*(t_1,z)}{1-2t_2-4zt_2-16z^2t_2^2},
\\
\lambda^2(t_1,t_2,z)&=& \lambda^1(t_1,t_2,z)-2t_2t_1^{-1}(2z+1)\phi(t_1,z)-4z^2t_2L^*(t_1,z),
\end{eqnarray*}
where the function $\phi$ is given by Proposition \ref{2-13}.
\end{lemma}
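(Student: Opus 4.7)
The plan is to derive two independent linear relations between $\lambda^1(t_1,t_2,z)$ and $\lambda^2(t_1,t_2,z)$ and then solve the resulting $2\times 2$ system. The first relation comes from peeling off the $m_2=1$ strip in the definition of $\lambda^1$; the second comes from turning the recurrence in Lemma~\ref{11-1} into a functional equation for generating functions.

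First, I would compute $\lambda^1-\lambda^2$ directly from the definitions. Since $\lambda^1$ sums over $m_2\geq 1$ and $\lambda^2$ sums over $m_2\geq 2$, their difference equals $\sum_{m_1\geq 1}2\lambda_{m_1,1}(z)t_1^{m_1}t_2$. Substituting the initial condition $\lambda_{m_1,1}(z)=(1+2z)\phi_{m_1+1}(z)+2^{m_1+1}z^2L_{m_1}(z)$ from Lemma~\ref{11-1}, and observing that $\sum_{m_1\geq 1}\phi_{m_1+1}(z)t_1^{m_1}=t_1^{-1}\phi(t_1,z)$ and $\sum_{m_1\geq 1}2^{m_1}L_{m_1}(z)t_1^{m_1}=L^*(t_1,z)$, gives precisely the second identity in the lemma.

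Next, I would multiply recurrence~(\ref{1-130}), valid for $m_1\geq 1$ and $m_2\geq 2$, by $2^{m_2}t_1^{m_1}t_2^{m_2}$ and sum over these ranges. The left-hand side is $\lambda^2(t_1,t_2,z)$. Shifting $m_2\mapsto m_2-1$ in the first sum on the right produces $(1+2z)\cdot 2t_2\,\lambda^1(t_1,t_2,z)$, while shifting $m_2\mapsto m_2-2$ in the second sum produces $4z^2\cdot 4t_2^2\bigl(\lambda^1(t_1,t_2,z)+\sum_{m_1\geq 1}\lambda_{m_1,0}(z)t_1^{m_1}\bigr)$. The boundary term contributes $t_1^{-1}\phi(t_1,z)$ via $\lambda_{m_1,0}(z)=\phi_{m_1+1}(z)$. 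This produces the relation
\[
\lambda^2=(2t_2+4zt_2+16z^2t_2^2)\lambda^1+16z^2t_2^2\,t_1^{-1}\phi(t_1,z).
\]

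Finally, I would substitute this into the first identity $\lambda^1-\lambda^2=2t_2t_1^{-1}(2z+1)\phi(t_1,z)+4z^2t_2L^*(t_1,z)$ and solve the resulting linear equation for $\lambda^1$, obtaining the claimed closed form with denominator $1-2t_2-4zt_2-16z^2t_2^2$. The main bookkeeping obstacle is keeping the index shifts and the boundary contribution at $m_2=0$ straight; everything else is routine algebra once the two relations are in hand.
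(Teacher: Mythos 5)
Your proposal is correct and follows essentially the same route as the paper: both convert the recurrence of Lemma~\ref{11-1} into a generating-function identity (the paper's left-hand side $\lambda^1-\sum_{m_1\geq 1}2t_2\lambda_{m_1,1}(z)t_1^{m_1}$ is exactly your $\lambda^2$), plug in the initial conditions $\lambda_{m_1,0}=\phi_{m_1+1}$ and $\lambda_{m_1,1}=(1+2z)\phi_{m_1+1}+2^{m_1+1}z^2L_{m_1}$, and solve for $\lambda^1$, with $\lambda^2$ then obtained by peeling off the $m_2=1$ strip. Your framing as a $2\times 2$ linear system is only a cosmetic repackaging of the paper's computation.
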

\begin{proof}

By Lemma    \ref{11-1}, we have
\begin{eqnarray*}
\sum_{m_1\geq 1,m_2\geq 2}
t_1^{m_1}t_2^{m_2}2^{m_2}\lambda_{m_1,m_2}(z)&=&\sum_{m_1\geq 1,m_2\geq 2} t_1^{m_1}t_2^{m_2} \cdot 2^{m_2}(1+2z) \lambda_{m_1,m_2-1}(z)
\\ && \quad +\sum_{m_1\geq 1,m_2\geq 2}t_1^{m_1}t_2^{m_2} \cdot  2^{m_2+2}z^2 \lambda_{m_1,m_2-2}(z).
\end{eqnarray*}
That is
\begin{eqnarray*}
&& \lambda^1(t_1,t_2,z)-\sum_{m_1\geq 1}
t_1^{m_1}t_2\cdot 2^{1}\lambda_{m_1,1}(z)
\\ &&=(1+2z)2t_2 \lambda^1(t_1,t_2,z)+16z^2t_2^2\sum_{m_1\geq 1,m_2\geq 0}t_1^{m_1}t_2^{m_2} \cdot  2^{m_2} \lambda_{m_1,m_2}(z)
\\ &&=(1+2z)2t_2 \lambda^1(t_1,t_2,z)+16z^2t_2^2\Big[\lambda^1(t_1,t_2,z)+ \sum_{m_1\geq 1 }    \lambda_{m_1,0}(z)t_1^{m_1}\Big].
\end{eqnarray*}
Therefore, by Lemma \ref{11-1}, one sees that
\begin{eqnarray*}
 \lambda^1(t_1,t_2,z)&=& \frac{1}{1-2t_2-4zt_2-16z^2t_2^2}\Big[2t_2\sum_{m_1\geq 1}
t_1^{m_1}\lambda_{m_1,1}(z)+ 16z^2t_2^2\sum_{m_1\geq 1 }   \lambda_{m_1,0}(z)t_1^{m_1}\Big]
\\ &=& \frac{1}{1-2t_2-4zt_2-16z^2t_2^2}\Big[2t_2\sum_{m_1\geq 1}
t_1^{m_1}\Big((2z+1)\phi_{m_1+1}(z)+2^{m_1+1}z^2L_{m_1}(z)\Big)
\\ &&\quad\quad \quad \quad \quad \quad\quad \quad\quad \quad \quad  \quad \quad  + 16z^2t_2^2\sum_{m_1\geq 1 }   \phi_{m_1+1}(z)t_1^{m_1}\Big]
\\ &=& \frac{1}{1-2t_2-4zt_2-16z^2t_2^2}\Big[(2t_2+4zt_2+16z^2t_2^2)t_1^{-1}\phi({t_1,z})
+4z^2t_2L^*(t_1,z)\Big].
\end{eqnarray*}
Now, we consider the computation of $\lambda^2(t_1,t_2,z)$. By Lemma \ref{11-1}, it holds that
\begin{eqnarray*}
\lambda^2(t_1,t_2,z)&=& \lambda^1(t_1,t_2,z)-\sum_{m_1\geq 1}
t_1^{m_1}t_2\cdot 2\lambda_{m_1,1}(z)
\\ &=& \lambda^1(t_1,t_2,z)-2t_2\sum_{m_1\geq 1}
t_1^{m_1}\Big((2z+1)\phi_{m_1+1}(z)+2^{m_1+1}z^2L_{m_1}(z)\Big)
\\ &=& \lambda^1(t_1,t_2,z)-2t_2t_1^{-1}(2z+1)\phi(t_1,z)-4z^2t_2L^*(t_1,z).
\end{eqnarray*}
\end{proof}

\begin{lemma}
We have
\begin{eqnarray*}
\E^1(t_1,t_2,z)&=&
\frac{(32z^2t_1^{-2}t_2^2+2t_1^{-3}t_2)\phi(t_1,z)+16z^2t_2\lambda^1(t_1,t_2,z)}{1-t_2-4zt_2-16z^2t_2^2}
\\ && -\frac{(8z^2+6z+2)t_1^{-1}t_2+(56z^3+56z^2+14z+2)t_2+32z^2t_2^2(4z^2+3z+1)}{1-t_2-4zt_2-16z^2t_2^2},
\\
\E^2(t_1,t_2,z)&=&  \E^1(t_1,t_2,z)- 2t_1^{-3}t_2\phi(t_1,z)+(8z^2+6z+2)t_1^{-1}t_2+(56z^3+56z^2+14z+2)t_2.
\end{eqnarray*}
\end{lemma}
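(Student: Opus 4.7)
The plan is to translate the two-variable recurrence from Theorem \ref{11-3} into a functional equation for $\E^1(t_1,t_2,z)$ and then read off $\E^2(t_1,t_2,z)$ by subtracting the $m_2=1$ slice. Since Lemma \ref{lema:add} already delivers $\lambda^1(t_1,t_2,z)$ in closed form, the inhomogeneous term in Theorem \ref{11-3} is effectively known.

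First, I would multiply the relation
$$\cE_{m_1,m_2}(z)=(1+4z)\cE_{m_1,m_2-1}(z)+16z^2\cE_{m_1,m_2-2}(z)+2^{m_2+3}z^2\lambda_{m_1,m_2-1}(z)$$
by $t_1^{m_1}t_2^{m_2}$ and sum over $m_1\geq 1$, $m_2\geq 2$, which is the range on which Theorem \ref{11-3} is valid. On the left side this yields $\E^1(t_1,t_2,z)$ minus the $m_2=1$ correction $\sum_{m_1\geq 1}\cE_{m_1,1}(z)t_1^{m_1}t_2=2t_2\sum_{m_1\geq 1}\phi_{m_1+3}(z)t_1^{m_1}$, which I would rewrite using the definition of $\phi(t_1,z)$ in Proposition \ref{2-13}, peeling off the two missing low-order monomials $\phi_2(z)t_1^2$ and $\phi_3(z)t_1^3$ to obtain a term proportional to $t_1^{-3}\phi(t_1,z)$ together with explicit polynomial remainders.

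Next I would convert each of the three sums on the right side. The $(1+4z)$-sum becomes $(1+4z)t_2\,\E^1(t_1,t_2,z)$ by a single index shift. The $16z^2$-sum shifts by two, producing $16z^2 t_2^2\,\E^1(t_1,t_2,z)$ plus a boundary term $16z^2 t_2^2\sum_{m_1\geq 1}\cE_{m_1,0}(z)t_1^{m_1}=32z^2 t_2^2\sum_{m_1\geq 1}\phi_{m_1+2}(z)t_1^{m_1}$, which again I translate into $t_1^{-2}\phi(t_1,z)$ minus the $\phi_2(z)t_1^2$ correction. The $\lambda$-sum, after absorbing the factor $2^{m_2-1}$ into the definition of $\lambda^1$, collapses neatly to $16z^2 t_2\,\lambda^1(t_1,t_2,z)$; this is where the exact normalization $2^{m_k}$ in $\lambda^i$ pays off and no extra shift correction is required.

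Collecting the $\E^1$ terms on the left gives the factor $1-(1+4z)t_2-16z^2 t_2^2=1-t_2-4zt_2-16z^2 t_2^2$, and dividing through yields precisely the stated formula for $\E^1(t_1,t_2,z)$; the constants $(8z^2+6z+2)$, $(56z^3+56z^2+14z+2)$ and $32z^2 t_2^2(4z^2+3z+1)$ arise by doubling the initial values $\phi_2(z)=4z^2+3z+1$ and $\phi_3(z)=28z^3+28z^2+7z+1$ taken from Proposition \ref{2-13}. Finally, $\E^2(t_1,t_2,z)=\E^1(t_1,t_2,z)-2t_2\sum_{m_1\geq 1}\phi_{m_1+3}(z)t_1^{m_1}$, and reusing the same $t_1^{-3}\phi(t_1,z)$ decomposition gives the second identity immediately.

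The only real obstacle is bookkeeping: the boundary corrections from $m_2\in\{0,1\}$ and from the fact that $\phi(t_1,z)=\sum_{m\geq 2}\phi_m(z)t_1^m$ starts at $m=2$ (rather than at $0$ or $1$) must be subtracted consistently to reconcile the two $\phi(t_1,z)$-like pieces with the explicit polynomial remainders. A careful side-by-side tracking of these low-order terms delivers exactly the numerators displayed in the lemma.
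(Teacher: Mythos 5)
Your proposal is correct and follows essentially the same route as the paper: the paper likewise sums the recurrence of Theorem \ref{11-3} over $m_1\geq 1,\,m_2\geq 2$ to get $\E^2$ in terms of $\E^1$, $\lambda^1$ and a $\phi(t_1,z)$-boundary term (its equation (\ref{16-1})), writes $\E^2=\E^1$ minus the $m_2=1$ slice via $\phi_{m_1+3}$ (its equation (\ref{zz-1})), and equates the two to solve for $\E^1$. Your bookkeeping of the low-order corrections ($\phi_2$, $\phi_3$, and the normalization $2^{m_k}$ inside $\lambda^1$) matches the paper's computation exactly.
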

\begin{proof}
By  Theorem
\ref{11-3},
  one sees that
  \begin{eqnarray}
 \nonumber  &&\E^2(t_1,t_2,z)=\sum_{m_1\geq 1,m_2\geq 2}\E_{m_1,m_2}(z)t_1^{m_1}t_2^{m_2}
  \\   \nonumber  &=&   \sum_{m_1\geq 1,m_2\geq 2}(1+4z)\E_{m_1,m_2-1}(z)t_1^{m_1}t_2^{m_2}
  + \sum_{m_1\geq 1,m_2\geq 2}16z^2\E_{m_1,m_2-2}(z) t_1^{m_1}t_2^{m_2}
  \\   \nonumber  &&+  \sum_{m_1\geq 1,m_2\geq 2}2^{m_2+3}z^2\lambda_{m_1,m_2-1}(z)t_1^{m_1}t_2^{m_2}
  \\   \nonumber  &=& (1+4z)t_2\E^1(t_1,t_2,z)+16z^2t_2^2\big[\E^1(t_1,t_2,z)+\sum_{m_1\geq 1}\E_{m_1,0}(z)t_1^{m_1}\big]
+16z^2t_2 \lambda^1(t_1,t_2,z)
    \\  \nonumber   &=& (1+4z)t_2\E^1(t_1,t_2,z)+16z^2t_2^2\big[\E^1(t_1,t_2,z)+2\sum_{m_1\geq 1}\phi_{m_1+2}(z)t_1^{m_1}\big]
+16z^2t_2 \lambda^1(t_1,t_2,z)
      \\   \nonumber  &=& 16z^2t_2^2\Big[\E^1(t_1,t_2,z)+2t_1^{-2}\big(\phi(t_1,z)-(4z^2+3z+1)t_1^2\big)\Big]
 +(1+4z)t_2\E^1(t_1,t_2,z)\\ &&\nonumber \quad +16z^2t_2 \lambda^1(t_1,t_2,z)
     \\ \nonumber   &=& (t_2+4zt_2+16z^2t_2^2)\E^1(t_1,t_2,z)+32z^2t_2^2\big(\phi(t_1,z)t_1^{-2}-(4z^2+3z+1)\big)
  \\  \label{16-1}&&\quad +16z^2t_2 \lambda^1(t_1,t_2,z).
  \end{eqnarray}
  On the other hand, it also holds that
  \begin{eqnarray}
  \nonumber &&\cE^2(t_1,t_2,z)= \sum_{m_1\geq 1,m_2\geq 2}\E_{m_1,m_2}(z)t_1^{m_1}t_2^{m_2}
  \\ \nonumber &=&     \E^1(t_1,t_2,z)- \sum_{m_1\geq 1}\E_{m_1,1}(z)t_1^{m_1}t_2
  =     \E^1(t_1,t_2,z)- 2\sum_{m_1\geq 1}\phi_{m_1+3}(z)t_1^{m_1}t_2
     \\ \nonumber  &=&     \E^1(t_1,t_2,z)- 2\sum_{m_1\geq -1}\phi_{m_1+3}(z)t_1^{m_1}t_2+2(4z^2+3z+1)t_1^{-1}t_2+2(28z^3+28z^2+7z+1)t_2
       \\  \label{zz-1}&=&    \E^1(t_1,t_2,z)- 2t_1^{-3}t_2\phi(t_1,z)+2(4z^2+3z+1)t_1^{-1}t_2+2(28z^3+28z^2+7z+1)t_2.
  \end{eqnarray}
  Combining (\ref{zz-1}) with (\ref{16-1}), we finish the  proof.
\end{proof}
\subsection{Some lemmas}
\label{10-1}
In this subsection, we list some lemmas  used in subsection 4.3.
\begin{lemma}
\label{12-1}
For $k\geq 3,$ we have $\lambda^{1}(t_1,t_2,\cdots,t_k,z)=\lambda^{0}(t_1,t_2,\cdots,t_k,z)-\frac{\E^1(t_1,t_2,\cdots,t_{k-1},z)}{2}.$

\end{lemma}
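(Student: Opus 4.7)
The plan is to compute the difference $\lambda^{0}(t_1,\ldots,t_k,z)-\lambda^{1}(t_1,\ldots,t_k,z)$ directly from the definitions and then identify it via the initial conditions already established in Lemma \ref{5-2}.

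First I would observe that the only distinction between $\lambda^{0}$ and $\lambda^{1}$ lies in whether the summation index $m_k$ is allowed to equal $0$. Thus, stripping off the $m_k=0$ slice of the series, I get
\begin{eqnarray*}
\lambda^{0}(t_1,\ldots,t_k,z)-\lambda^{1}(t_1,\ldots,t_k,z)
&=& \sum_{m_1,\ldots,m_{k-1}\geq 1} 2^{0}\,\lambda_{m_1,\ldots,m_{k-1},0}(z)\,t_1^{m_1}\cdots t_{k-1}^{m_{k-1}}\,t_k^{0}
\\
&=& \sum_{m_1,\ldots,m_{k-1}\geq 1} \lambda_{m_1,\ldots,m_{k-1},0}(z)\,t_1^{m_1}\cdots t_{k-1}^{m_{k-1}}.
\end{eqnarray*}
Note that the variable $t_k$ does not appear on the right-hand side, which is exactly what we need in order to match $\E^1(t_1,\ldots,t_{k-1},z)$.

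Next I would invoke the initial condition from Lemma \ref{5-2}, namely $\lambda_{m_1,\ldots,m_{k-1},0}(z)=\tfrac{1}{2}\E_{m_1,\ldots,m_{k-1}}(z)$, which holds for any $m_1,\ldots,m_{k-1}\geq 1$ when $k\geq 3$. Substituting this into the display above gives
\begin{eqnarray*}
\lambda^{0}(t_1,\ldots,t_k,z)-\lambda^{1}(t_1,\ldots,t_k,z)
&=& \frac{1}{2}\sum_{m_1,\ldots,m_{k-1}\geq 1} \E_{m_1,\ldots,m_{k-1}}(z)\,t_1^{m_1}\cdots t_{k-1}^{m_{k-1}}
\\
&=& \frac{1}{2}\,\E^{1}(t_1,\ldots,t_{k-1},z),
\end{eqnarray*}
the last step being the definition of $\E^{1}$. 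Rearranging yields the claimed identity.

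I do not anticipate any real obstacle: the lemma is essentially a bookkeeping identity that re-expresses the ``missing slice'' between $\lambda^{0}$ and $\lambda^{1}$ using the already-known relation $\lambda_{m_1,\ldots,m_{k-1},0}=\tfrac{1}{2}\E_{m_1,\ldots,m_{k-1}}$ from Lemma \ref{5-2}. The only thing to be careful about is the bookkeeping of the weight $2^{m_k}$ at $m_k=0$, which gives $2^0=1$ and produces the factor $\tfrac{1}{2}$ on the right-hand side exactly once.
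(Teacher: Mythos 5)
Your proposal is correct and follows essentially the same route as the paper: peel off the $m_k=0$ slice of $\lambda^0$ (noting $2^{0}=1$ and $t_k^{0}=1$), substitute the initial condition $\lambda_{m_1,\ldots,m_{k-1},0}(z)=\tfrac12\E_{m_1,\ldots,m_{k-1}}(z)$ from Lemma \ref{5-2}, and recognize the resulting series as $\tfrac12\E^1(t_1,\ldots,t_{k-1},z)$. No gaps.
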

\begin{proof}
By Lemma \ref{5-2}, we obtain
    \begin{eqnarray*}
  \lambda^{1}(t_1,t_2,\cdots,t_k,z)&=&\sum_{m_1,m_2,\cdots,m_{k-1}\geq 1,m_k\geq 1}2^{m_k} \lambda_{m_1,m_2,\cdots,m_k}(z)t_1^{m_1}t_2^{m_2}\cdots t_k^{m_k}
\\ &=&\lambda^{0}(t_1,t_2,\cdots,t_k,z)-\sum_{m_1,m_2,\cdots,m_{k-1}\geq 1} \lambda_{m_1,m_2,\cdots,m_{k-1},0}(z)t_1^{m_1}t_2^{m_2}\cdots t_{k-1}^{m_{k-1}}
\\&=&\lambda^{0}(t_1,t_2,\cdots,t_k,z) -\sum_{m_1,m_2,\cdots,m_{k-1}\geq 1}\frac{\E_{m_1,m_2,\cdots,m_{k-1}}(z)}{2}t_1^{m_1}t_2^{m_2}\cdots t_{k-1}^{m_{k-1}}
\\ &=& \lambda^{0}(t_1,t_2,\cdots,t_k,z)-\frac{\E^1(t_1,t_2,\cdots,t_{k-1},z)}{2}.
  \end{eqnarray*}

\end{proof}

\begin{lemma}
\label{12-2}
For any $k\geq 3,$ we have
$\lambda^{2}(t_1,t_2,\cdots,t_k,z)
  =\lambda^{1}(t_1,t_2,\cdots,t_k,z)
  -t_k(1+2z)\E^1(t_1,\cdots$ $,t_{k-1},z)-16z^2t_k\lambda^1(t_1,\cdots,t_{k-1,z}).
$
\end{lemma}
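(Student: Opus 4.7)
The plan is to expand $\lambda^{1}(t_1,\ldots,t_k,z)-\lambda^{2}(t_1,\ldots,t_k,z)$ directly from the definition, substitute the known initial condition from Lemma \ref{5-2} for $\lambda_{m_1,\ldots,m_{k-1},1}(z)$, and reassemble the two resulting sums into $\E^1(t_1,\ldots,t_{k-1},z)$ and $\lambda^1(t_1,\ldots,t_{k-1},z)$.

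Since $\lambda^2$ differs from $\lambda^1$ only in excluding the terms with $m_k=1$, I get
$$
\lambda^{1}(t_1,\ldots,t_k,z)-\lambda^{2}(t_1,\ldots,t_k,z)
= t_k\sum_{m_1,\ldots,m_{k-1}\geq 1} 2\,\lambda_{m_1,\ldots,m_{k-1},1}(z)\,t_1^{m_1}\cdots t_{k-1}^{m_{k-1}}.
$$
By the initial condition in Lemma \ref{5-2},
$$
2\,\lambda_{m_1,\ldots,m_{k-1},1}(z)
= (1+2z)\,\E_{m_1,\ldots,m_{k-1}}(z)+2^{m_{k-1}+4}z^2\,\lambda_{m_1,\ldots,m_{k-1}}(z).
$$
Substituting and splitting the sum, the first piece gives $t_k(1+2z)\E^{1}(t_1,\ldots,t_{k-1},z)$ by the definition of $\E^1$, while the second piece becomes $16z^2 t_k$ times $\sum_{m_1,\ldots,m_{k-1}\geq 1}2^{m_{k-1}}\lambda_{m_1,\ldots,m_{k-1}}(z)\prod_{i=1}^{k-1}t_i^{m_i}$, which is exactly $16z^2 t_k\,\lambda^{1}(t_1,\ldots,t_{k-1},z)$. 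Solving for $\lambda^{2}$ yields the claimed identity.

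This is essentially a bookkeeping argument, and I do not expect any real obstacle. The only point to watch is the factor $2^{m_{k-1}+4}=16\cdot 2^{m_{k-1}}$: the extra power of $2$ comes from multiplying the Lemma \ref{5-2} identity by $2$, and it combines with the $2^{m_{k-1}}$ weight built into $\lambda^1(t_1,\ldots,t_{k-1},z)$ to produce the coefficient $16z^2$ in front of the $\lambda^1$ term.
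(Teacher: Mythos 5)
Your proposal is correct and follows essentially the same route as the paper: split off the $m_k=1$ terms of $\lambda^1$, substitute the initial condition $\lambda_{m_1,\ldots,m_{k-1},1}(z)=\frac{1+2z}{2}\E_{m_1,\ldots,m_{k-1}}(z)+2^{m_{k-1}+3}z^2\lambda_{m_1,\ldots,m_{k-1}}(z)$ from Lemma \ref{5-2}, and recognize the two resulting sums as $\E^1(t_1,\ldots,t_{k-1},z)$ and (via the built-in weight $2^{m_{k-1}}$) $\lambda^1(t_1,\ldots,t_{k-1},z)$. Your bookkeeping of the factor $2^{m_{k-1}+4}=16\cdot 2^{m_{k-1}}$ matches the paper's computation exactly.
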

\begin{proof}
With the help of  Lemma \ref{5-2}, we obtain
    \begin{eqnarray*}
  &&\lambda^{2}(t_1,t_2,\cdots,t_k,z)\\&=&\sum_{m_1,m_2,\cdots,m_{k-1}\geq 1,m_k\geq 2}2^{m_k} \lambda_{m_1,m_2,\cdots,m_k}(z)t_1^{m_1}t_2^{m_2}\cdots t_k^{m_k}
\\ &=&\lambda^{1}(t_1,t_2,\cdots,t_k,z)-2t_k\sum_{m_1,m_2,\cdots,m_{k-1}\geq 1} \lambda_{m_1,m_2,\cdots,m_{k-1},1}(z)t_1^{m_1}t_2^{m_2}\cdots t_{k-1}^{m_{k-1}}
\\&=&\lambda^{1}(t_1,t_2,\cdots,t_k,z)
\\ && -2t_k\sum_{m_1,m_2,\cdots,m_{k-1}\geq 1}\big[\frac{1+2z}{2}\E_{m_1,\cdots,m_{k-1}}(z)+2^{m_{k-1}+3}z^2\cdot \lambda_{m_1,\cdots,m_{k-1}}(z)\big]t_1^{m_1}t_2^{m_2}\cdots t_{k-1}^{m_{k-1}}
\\ &=& \lambda^{1}(t_1,t_2,\cdots,t_k,z)
-t_k(1+2z)\E^1(t_1,t_2,\cdots,t_{k-1},z)-16z^2t_k\lambda^1(t_1,t_2,\cdots,t_{k-1},z).
  \end{eqnarray*}

\end{proof}

\begin{lemma}\label{12-3}
For any $k\geq 3,$ we have
$$
 \lambda^2(t_1,t_2,\cdots,t_k,z)= (2+4z)t_k \lambda^1(t_1,t_2,\cdots,t_k,z)+16z^2t_k^2 \lambda^0(t_1,t_2,\cdots,t_k,z).
 $$
\end{lemma}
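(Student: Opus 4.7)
The plan is to derive the identity purely by manipulating the generating function definitions in Section~4.1 together with the three-term recurrence of Lemma~\ref{5-2}. Concretely, I would start from the recurrence
$$\lambda_{m_1,\ldots,m_{k-1},m_k}(z) = (1+2z)\lambda_{m_1,\ldots,m_{k-1},m_k-1}(z) + 4z^2\lambda_{m_1,\ldots,m_{k-1},m_k-2}(z),$$
multiply both sides by $2^{m_k} t_1^{m_1}\cdots t_k^{m_k}$, and sum over all $m_1,\ldots,m_{k-1}\geq 1$ and $m_k\geq 2$. The left-hand side is then exactly $\lambda^2(t_1,\ldots,t_k,z)$ by definition, so the content of the lemma is to identify each term on the right-hand side with the claimed generating function.

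For the first term on the right, I would substitute $m_k' = m_k-1$ and note that $m_k\geq 2$ corresponds to $m_k'\geq 1$; factoring out one power of $t_k$ and one power of $2$ turns the double sum into
$$(1+2z)\cdot 2 t_k \sum_{\substack{m_1,\ldots,m_{k-1}\geq 1\\ m_k'\geq 1}} 2^{m_k'}\lambda_{m_1,\ldots,m_k'}(z)\, t_1^{m_1}\cdots t_k^{m_k'} = (2+4z)t_k\,\lambda^1(t_1,\ldots,t_k,z).$$
For the second term, the shift $m_k' = m_k-2$ gives $m_k'\geq 0$, and pulling out $t_k^2$ together with $2^2=4$ yields $16 z^2 t_k^2\,\lambda^0(t_1,\ldots,t_k,z)$. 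Adding the two contributions gives the stated identity.

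There is no real obstacle here: everything is a bookkeeping check that the index ranges match. The two points that need a moment of care are that the recurrence of Lemma~\ref{5-2} is valid precisely for $m_k\geq 2$, which is what justifies starting the outer summation at $m_k=2$, and that the definition of $\lambda^0$ at the beginning of Section~4 does include the $m_k=0$ layer produced by the second shift. Once those two alignments are noted, the identity follows in one line and the proof is complete.
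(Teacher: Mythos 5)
Your proposal is correct and follows essentially the same route as the paper: multiply the recurrence of Lemma \ref{5-2} by $2^{m_k}t_1^{m_1}\cdots t_k^{m_k}$, sum over $m_1,\ldots,m_{k-1}\geq 1$ and $m_k\geq 2$, and shift indices so that the two terms become $(2+4z)t_k\lambda^1$ and $16z^2t_k^2\lambda^0$. Your added remarks on the validity range $m_k\geq 2$ of the recurrence and on $\lambda^0$ containing the $m_k=0$ layer are exactly the bookkeeping the paper's one-line identification relies on.
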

\begin{proof}
Using  (\ref{1-8}), we have
\begin{eqnarray*}
&& \sum_{m_1,\cdots,m_{k-1}\geq 1,m_k\geq 2}
t_1^{m_1}t_2^{m_2}\cdots t_k^{m_k}\cdot 2^{m_k}\lambda_{m_1,m_2,\cdots,m_{k-1}, m_k}(z)
\\ &&=\sum_{m_1,\cdots,m_{k-1}\geq 1,m_k\geq 2} t_1^{m_1}t_2^{m_2}\cdots t_k^{m_k} \cdot 2^{m_k}(1+2z) \lambda_{m_1,m_2,\cdots,m_{k-1},m_k-1}(z)
\\ && \quad +\sum_{m_1,\cdots,m_{k-1}\geq 1,m_k\geq 2}t_1^{m_1}t_2^{m_2}\cdots t_k^{m_k} \cdot  2^{m_k}4z^2 \lambda_{m_1,m_2,\cdots,m_{k-1},m_k-2}(z).
\end{eqnarray*}
That is $\lambda^2(t_1,t_2,\cdots,t_k,z)= (2+4z)t_k \lambda^1(t_1,t_2,\cdots,t_k,z)+16z^2t_k^2\lambda^0(t_1,t_2,\cdots,t_k,z).$

\end{proof}

\begin{lemma}
\label{pz-2}
For any $k\geq 3,$ we have
   $$
  \E^2(t_1,t_2,\cdots,t_k,z)
  =(1+4z)t_k \E^1(t_1,t_2,\cdots,t_k,z)+16z^2t_k^2\E^0(t_1, \cdots,t_k,z)
+16z^2t_k \lambda^1(t_1,t_2,\cdots,t_k,z).
$$
\end{lemma}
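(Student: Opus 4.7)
The plan is to derive this identity by applying the same generating-function technique used in the proofs of Lemmas~\ref{12-1}--\ref{12-3}: take the recurrence from Theorem~\ref{5-1}, which is valid for $m_1,\dots,m_{k-1}\ge 1$ and $m_k\ge 2$, multiply both sides by $t_1^{m_1}t_2^{m_2}\cdots t_k^{m_k}$, and sum over precisely the same range. The left-hand side then collapses, by definition, to $\E^2(t_1,\dots,t_k,z)$. The remaining work is to re-index each term on the right-hand side so that it matches one of the previously-defined generating functions $\E^1,\E^0,\lambda^1$.

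For the first term $(1+4z)\E_{m_1,\dots,m_{k-1},m_k-1}(z)$, I would substitute $n_k=m_k-1$. Since $m_k\ge 2$ gives $n_k\ge 1$, this contributes $(1+4z)t_k\cdot\E^1(t_1,\dots,t_k,z)$. For the second term $16z^2\E_{m_1,\dots,m_{k-1},m_k-2}(z)$, I would substitute $n_k=m_k-2$; now $n_k\ge 0$, which is exactly the range defining $\E^0$, producing $16z^2t_k^2\E^0(t_1,\dots,t_k,z)$. This is precisely the mechanism by which $\E^0$ enters the statement, even though the original recurrence is indexed by $m_k\ge 2$.

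For the third term $2^{m_k+3}z^2\lambda_{m_1,\dots,m_{k-1},m_k-1}(z)$, I would again set $n_k=m_k-1$ and rewrite $2^{m_k+3}=16\cdot 2^{n_k}$. The weighting $2^{n_k}$ is exactly the factor built into the definition of $\lambda^1$, so after summing over $n_k\ge 1$ and $m_1,\dots,m_{k-1}\ge 1$ this term contributes $16z^2t_k\cdot\lambda^1(t_1,\dots,t_k,z)$. Adding the three contributions gives the asserted identity.

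The main potential pitfall is bookkeeping: one must verify that no boundary terms have to be subtracted or added when re-indexing. Because the recurrence in Theorem~\ref{5-1} starts exactly at $m_k=2$, the shifted indices $n_k\ge 1$ and $n_k\ge 0$ line up cleanly with the definitions of $\E^1$ and $\E^0$, respectively, and no correction terms arise. The only other subtlety is matching the power of $2$ absorbed into $\lambda^1$, which works out because the exponent $m_k+3$ in the recurrence becomes $n_k+4$, of which the $2^{n_k}$ piece is exactly what $\lambda^1$ carries.
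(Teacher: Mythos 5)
Your proposal is correct and is essentially the paper's own proof: the paper likewise sums the recurrence of Theorem \ref{5-1} over $m_1,\dots,m_{k-1}\geq 1$, $m_k\geq 2$, re-indexes each term, and identifies the shifted sums with $\E^1$, $\E^0$, and $\lambda^1$ (the factor $2^{m_k+3}=16\cdot 2^{m_k-1}$ being absorbed exactly as you describe). No boundary corrections are needed, just as you argue.
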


\begin{proof}
  By Theorem  \ref{5-1}, we have
    \begin{eqnarray*}
     \E^2(t_1,t_2,\cdots,t_k,z) &=& \sum_{m_1,m_2,\cdots,m_{k-1}\geq 1,m_k\geq 2} \E_{m_1,m_2,\cdots,m_k}(z)t_1^{m_1}t_2^{m_2}\cdots t_k^{m_k}
\\    &=&\quad (1+4z) \sum_{m_1,m_2,\cdots,m_{k-1}\geq 1,m_k\geq 2} \E_{m_1,\cdots,m_{k-1},m_k-1}(z)t_1^{m_1}t_2^{m_2}\cdots t_k^{m_k}
\\ &&\quad  +16z^2 \sum_{m_1,m_2,\cdots,m_{k-1}\geq 1,m_k\geq 2} \E_{m_1,\cdots,m_{k-1},m_k-2}(z)t_1^{m_1}t_2^{m_2}\cdots t_k^{m_k}
\\ &&\quad +\sum_{m_1,m_2,\cdots,m_{k-1}\geq 1,m_k\geq 2} 2^{m_k+3}z^2\lambda_{m_1,\cdots,m_{k-1},m_k-1}(z)t_1^{m_1}t_2^{m_2}\cdots t_k^{m_k}
\\  &=& (1+4z)t_k \E^1(t_1,t_2,\cdots,t_k,z)+16z^2t_k^2\E^0(t_1,t_2,\cdots,t_k,z)
+16z^2t_k \lambda^1(t_1,t_2,\cdots,t_k,z)
  \end{eqnarray*}
  which completes the proof.
\end{proof}

\begin{lemma}
\label{pz-3}
For  $k\geq 3,$ one has
$$  \E^3(t_1,t_2,\cdots,t_k,z)
  = (1+4z)t_k \E^2(t_1,t_2,\cdots,t_k,z)+16z^2t_k^2\E^1(t_1, \cdots,t_k,z)+16z^2t_k \lambda^2(t_1,t_2,\cdots,t_k,z).
$$
\end{lemma}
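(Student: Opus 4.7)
The plan is to prove Lemma \ref{pz-3} in direct parallel with the proof of Lemma \ref{pz-2}: apply the recurrence in Theorem \ref{5-1} termwise inside the generating function $\E^3$, and reindex.

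First, I would expand the definition
\begin{equation*}
\E^3(t_1,\cdots,t_k,z)=\sum_{m_1,\cdots,m_{k-1}\geq 1,\ m_k\geq 3}\E_{m_1,\cdots,m_k}(z)\,t_1^{m_1}\cdots t_k^{m_k},
\end{equation*}
and note that since every summand has $m_k\geq 3\geq 2$, Theorem \ref{5-1} applies to each $\E_{m_1,\cdots,m_k}(z)$. Substituting the recurrence splits $\E^3$ into three sums corresponding to the three terms
\begin{equation*}
(1+4z)\E_{m_1,\cdots,m_{k-1},m_k-1}(z),\quad 16z^2\E_{m_1,\cdots,m_{k-1},m_k-2}(z),\quad 2^{m_k+3}z^2\lambda_{m_1,\cdots,m_{k-1},m_k-1}(z).
\end{equation*}

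Next I would reindex each sum in the variable $m_k$. In the first sum, setting $m_k'=m_k-1$ yields $m_k'\geq 2$, so it collects one factor of $t_k$ and becomes $(1+4z)t_k\,\E^2(t_1,\cdots,t_k,z)$. In the second sum, setting $m_k'=m_k-2$ gives $m_k'\geq 1$, producing $16z^2 t_k^2\,\E^1(t_1,\cdots,t_k,z)$. In the third sum, setting $m_k'=m_k-1$ gives $m_k'\geq 2$ and $2^{m_k+3}=2^{m_k'+4}=16\cdot 2^{m_k'}$, so after pulling out $t_k$ it equals
\begin{equation*}
16z^2 t_k\sum_{m_1,\cdots,m_{k-1}\geq 1,\ m_k'\geq 2}2^{m_k'}\lambda_{m_1,\cdots,m_{k-1},m_k'}(z)\,t_1^{m_1}\cdots t_{k-1}^{m_{k-1}}t_k^{m_k'}=16z^2 t_k\,\lambda^2(t_1,\cdots,t_k,z),
\end{equation*}
where the extra factor of $2^{m_k'}$ is exactly what the definition of $\lambda^i$ requires. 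Summing the three contributions gives the claimed identity.

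There is no real obstacle beyond careful bookkeeping. The only point that needs checking is that the lower range $m_k\geq 3$ on the left matches the lower ranges $m_k'\geq 2,1,2$ produced by the three shifts, which are precisely the ranges in $\E^2$, $\E^1$, and $\lambda^2$ respectively; this is what makes the identity come out cleanly, and it is the same combinatorial alignment that already worked for Lemma \ref{pz-2}.
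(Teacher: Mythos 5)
Your proposal is correct and is essentially identical to the paper's own proof of Lemma \ref{pz-3}: apply the recurrence of Theorem \ref{5-1} termwise over the range $m_k\geq 3$, split into three sums, and reindex so that the shifted ranges $m_k-1\geq 2$, $m_k-2\geq 1$, $m_k-1\geq 2$ match $\E^2$, $\E^1$, and $\lambda^2$ (with the factor $2^{m_k+3}=16\cdot 2^{m_k-1}$ absorbing the weight $2^{m_k}$ in the definition of $\lambda^2$). Nothing further is needed.
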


\begin{proof}
  By Theorem  \ref{5-1}, one arrives at that
    \begin{eqnarray*}
  \E^3(t_1,t_2,\cdots,t_k,z) &=& \sum_{m_1,m_2,\cdots,m_{k-1}\geq 1,m_k\geq 3} \E_{m_1,m_2,\cdots,m_k}(z)t_1^{m_1}t_2^{m_2}\cdots t_k^{m_k}
    \\ &=&(1+4z) \sum_{m_1,m_2,\cdots,m_{k-1}\geq 1,m_k\geq 3} \E_{m_1,\cdots,m_{k-1},m_k-1}(z)t_1^{m_1}t_2^{m_2}\cdots t_k^{m_k}
    \\ && +16z^2 \sum_{m_1,m_2,\cdots,m_{k-1}\geq 1,m_k\geq 3} \E_{m_1,\cdots,m_{k-1},m_k-2}(z)t_1^{m_1}t_2^{m_2}\cdots t_k^{m_k}
    \\ &&+\sum_{m_1,m_2,\cdots,m_{k-1}\geq 1,m_k\geq 3} 2^{m_k+3}z^2\lambda_{m_1,\cdots,m_{k-1},m_k-1}(z)t_1^{m_1}t_2^{m_2}\cdots t_k^{m_k}
\\  &=& (1+4z)t_k \E^2(t_1,t_2,\cdots,t_k,z)+16z^2t_k^2\E^1(t_1,t_2,\cdots,t_k,z)
+16z^2t_k \lambda^{2}(t_1,t_2,\cdots,t_k,z).
  \end{eqnarray*}
\end{proof}

\begin{lemma}
\label{pz-1}
We have
$ \E^1(t_1,t_2,\cdots,t_k,z) = \E^0(t_1,t_2,\cdots,t_k,z) -t_{k-1}^{-1}\E^2(t_1,t_2,\cdots,t_{k-1},z).$
\end{lemma}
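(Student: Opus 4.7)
The strategy is to extract the $m_k=0$ slice of the series $\E^0$ and then recognize, after the initial condition $\E_{m_1,\cdots,m_{k-1},0}(z)=\E_{m_1,\cdots,m_{k-1}+1}(z)$ from Theorem \ref{5-1}, that the resulting series is an index-shift of $\E^2(t_1,\cdots,t_{k-1},z)$. Concretely, I would start from the definitions of $\E^0$ and $\E^1$ and note that they differ only in whether the $m_k=0$ term is included, so that
\begin{equation*}
\E^0(t_1,\cdots,t_k,z)-\E^1(t_1,\cdots,t_k,z)=\sum_{m_1,\cdots,m_{k-1}\geq 1}\E_{m_1,\cdots,m_{k-1},0}(z)\,t_1^{m_1}\cdots t_{k-1}^{m_{k-1}}.
\end{equation*}

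Next I would invoke the initial condition $\E_{m_1,\cdots,m_{k-1},0}(z)=\E_{m_1,\cdots,m_{k-1}+1}(z)$ (equivalently Propositions \ref{x-13} and \ref{2-12}) to rewrite the right-hand side as
\begin{equation*}
\sum_{m_1,\cdots,m_{k-1}\geq 1}\E_{m_1,\cdots,m_{k-2},m_{k-1}+1}(z)\,t_1^{m_1}\cdots t_{k-1}^{m_{k-1}}.
\end{equation*}
Then I would perform the change of variable $n_{k-1}=m_{k-1}+1$, so that the new summation index runs over $n_{k-1}\geq 2$ while the exponent of $t_{k-1}$ becomes $n_{k-1}-1$. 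Comparing with the definition
\begin{equation*}
\E^2(t_1,\cdots,t_{k-1},z)=\sum_{m_1,\cdots,m_{k-2}\geq 1,\,n_{k-1}\geq 2}\E_{m_1,\cdots,m_{k-2},n_{k-1}}(z)\,t_1^{m_1}\cdots t_{k-2}^{m_{k-2}}t_{k-1}^{n_{k-1}},
\end{equation*}
I see that the sum is exactly $t_{k-1}^{-1}\E^2(t_1,\cdots,t_{k-1},z)$, which yields the claimed identity after rearrangement.

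This is essentially a bookkeeping lemma, so there is no genuine obstacle; the only delicate point is keeping the index shift straight, in particular remembering that the $m_{k-1}\geq 1$ range on the left forces the shifted range to be $n_{k-1}\geq 2$, which is precisely the convention used to define $\E^2$. No recursion or rank analysis of the overlap matrices is needed at this step, only the definitions of $\E^i$ and the initial condition already established in Theorem \ref{5-1}.
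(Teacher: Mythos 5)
Your proposal is correct and follows essentially the same route as the paper: split off the $m_k=0$ slice of $\E^0$, apply the initial condition $\E_{m_1,\cdots,m_{k-1},0}(z)=\E_{m_1,\cdots,m_{k-1}+1}(z)$ from Theorem \ref{5-1}, and recognize the shifted series as $t_{k-1}^{-1}\E^2(t_1,\cdots,t_{k-1},z)$. The index bookkeeping you spell out (the shifted range $n_{k-1}\geq 2$ matching the definition of $\E^2$) is exactly what the paper's proof does implicitly.
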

\begin{proof}
Noting these  initial conditions in Theorem \ref{5-1}, one  easily  sees that
    \begin{eqnarray*}
 &&  \E^1(t_1,t_2,\cdots,t_k,z)
  \\  &=& \sum_{m_1,m_2,\cdots,m_{k-1}\geq 1,m_k\geq 1} \E_{m_1,m_2,\cdots,m_k}(z)t_1^{m_1}t_2^{m_2}\cdots t_k^{m_k}
    \\ &=& \sum_{m_1,m_2,\cdots,m_{k-1}\geq 1,m_k\geq 0} \E_{m_1,m_2,\cdots,m_k}(z)t_1^{m_1}t_2^{m_2}\cdots t_k^{m_k}
    - \sum_{m_1,m_2,\cdots,m_{k-1}\geq 1} \E_{m_1,m_2,\cdots,m_{k-1},0}(z)t_1^{m_1}t_2^{m_2}\cdots t_{k-1}^{m_{k-1}}
    \\ &=&  \E^0(t_1,t_2,\cdots,t_k,z)- \sum_{m_1,m_2,\cdots,m_{k-1}\geq 1} \E_{m_1,m_2,\cdots,m_{k-1}+1}(z)t_1^{m_1}t_2^{m_2}\cdots t_{k-1}^{m_{k-1}}
    \\&=& \E^0(t_1,t_2,\cdots,t_k,z) -t_{k-1}^{-1}\E^2(t_1,t_2,\cdots,t_{k-1},z).
  \end{eqnarray*}
\end{proof}

\begin{lemma}
\label{pz-4}
We have $\E^2(t_1,\cdots, t_k,z)=\E^1(t_1,\cdots, t_k,z)-t_kt_{k-1}^{-2}\E^3(t_1,\cdots, t_{k-1},z).$
\end{lemma}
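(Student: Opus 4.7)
The plan is to read off the difference $\E^1-\E^2$ directly from the defining series and then apply the initial condition from Theorem~\ref{5-1} to relate it to $\E^3(t_1,\cdots,t_{k-1},z)$.

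First I would subtract the two generating functions. By definition the only terms present in $\E^1(t_1,\cdots,t_k,z)$ but absent from $\E^2(t_1,\cdots,t_k,z)$ are those with $m_k=1$, so
\begin{equation*}
\E^1(t_1,\cdots,t_k,z)-\E^2(t_1,\cdots,t_k,z)=t_k\sum_{m_1,\cdots,m_{k-1}\geq 1}\E_{m_1,\cdots,m_{k-1},1}(z)\,t_1^{m_1}\cdots t_{k-1}^{m_{k-1}}.
\end{equation*}

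Next I would invoke the initial condition $\E_{m_1,\cdots,m_{k-1},1}(z)=\E_{m_1,\cdots,m_{k-1}+2}(z)$ from Theorem~\ref{5-1} to rewrite the right-hand side as
\begin{equation*}
t_k\sum_{m_1,\cdots,m_{k-1}\geq 1}\E_{m_1,\cdots,m_{k-2},m_{k-1}+2}(z)\,t_1^{m_1}\cdots t_{k-1}^{m_{k-1}},
\end{equation*}
and then make the change of index $n_{k-1}:=m_{k-1}+2$ (so $n_{k-1}\geq 3$), which contributes a factor $t_{k-1}^{-2}$ and turns the sum into
\begin{equation*}
t_kt_{k-1}^{-2}\sum_{m_1,\cdots,m_{k-2}\geq 1,\;n_{k-1}\geq 3}\E_{m_1,\cdots,m_{k-2},n_{k-1}}(z)\,t_1^{m_1}\cdots t_{k-2}^{m_{k-2}}t_{k-1}^{n_{k-1}}=t_kt_{k-1}^{-2}\E^3(t_1,\cdots,t_{k-1},z),
\end{equation*}
by the definition of $\E^3$. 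Rearranging gives the claimed identity.

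There is no real obstacle: the argument is purely formal manipulation of the defining power series, and the only substantive input is the initial condition from Theorem~\ref{5-1}. The one point that requires a little care is the bookkeeping of the lower summation limits after the shift $n_{k-1}=m_{k-1}+2$, to confirm that the shifted range $n_{k-1}\geq 3$ agrees exactly with the lower bound of the $m_{k-1}$ index in the definition of $\E^3(t_1,\cdots,t_{k-1},z)$.
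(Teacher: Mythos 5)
Your proposal is correct and follows essentially the same route as the paper: split off the $m_k=1$ terms from $\E^1-\E^2$, substitute the initial condition $\E_{m_1,\cdots,m_{k-1},1}(z)=\E_{m_1,\cdots,m_{k-1}+2}(z)$ from Theorem \ref{5-1}, and shift the index to recognize $t_{k-1}^{-2}\E^3(t_1,\cdots,t_{k-1},z)$. The index bookkeeping you flag ($n_{k-1}\geq 3$ matching the lower bound in the definition of $\E^3$) is exactly the point the paper's computation also relies on, so nothing is missing.
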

\begin{proof}
By these  initial conditions in Theorem \ref{5-1}, we obtain
  \begin{eqnarray*}
 \E^2(t_1,t_2,\cdots,t_k,z)
   &=& \sum_{m_1,m_2,\cdots,m_{k-1}\geq 1,m_k\geq 2} \E_{m_1,m_2,\cdots,m_k}(z)t_1^{m_1}t_2^{m_2}\cdots t_k^{m_k}
    \\ &=& \sum_{m_1,m_2,\cdots,m_{k-1}\geq 1,m_k\geq 1} \E_{m_1,m_2,\cdots,m_k}(z)t_1^{m_1}t_2^{m_2}\cdots t_k^{m_k}
   \\ &&- \sum_{m_1,m_2,\cdots,m_{k-1}\geq 1} \E_{m_1,m_2,\cdots,m_{k-1},1}(z)t_1^{m_1}t_2^{m_2}\cdots t_{k-1}^{m_{k-1}}t_k^1
    \\ &=&  \E^1(t_1,t_2,\cdots,t_k,z)- t_k \sum_{m_1,m_2,\cdots,m_{k-1}\geq 1} \E_{m_1,m_2,\cdots,m_{k-1}+2}(z)t_1^{m_1}t_2^{m_2}\cdots t_{k-1}^{m_{k-1}}
    \\&=& \E^1(t_1,t_2,\cdots,t_k,z) -t_k t_{k-1}^{-2}\E^3(t_1,t_2,\cdots,t_{k-1},z).
  \end{eqnarray*}
\end{proof}

\subsection{The calculation  of generating functions}
\label{10-2}

In this subsection, we will demonstrate      a  recurrence relation  between
$$(
\lambda^1(t_1,t_2,\cdots,t_k,z),
\lambda^2(t_1,t_2,\cdots,t_k,z),
\E^1(t_1,t_2,\cdots,t_k,z),
\E^2(t_1,t_2,\cdots,t_k,z))$$ and
$$(
\lambda^1(t_1,t_2,\cdots,t_{k-1},z),
\lambda^2(t_1,t_2,\cdots,t_{k-1},z),
\E^1(t_1,t_2,\cdots,t_{k-1},z),
\E^2(t_1,t_2,\cdots,t_{k-1},z)).$$
In achieve them, we demonstrate  two lemmas first.
\begin{lemma}
For $k\geq 3,$ we have
\begin{eqnarray}
\nonumber \lambda^1(t_1,t_2,\cdots,t_k,z)&=&A_{11}(t_k,z)\lambda^1(t_1,t_2,\cdots,t_{k-1},z)+A_{12}(t_k,z)\lambda^2(t_1,t_2,\cdots,t_{k-1},z)
\\ \label{5-3}&&+A_{13}(t_k,z)\E^1(t_1,t_2,\cdots,t_{k-1},z)+A_{14}(t_k,z)\E^2(t_1,t_2,\cdots,t_{k-1},z),
\\ \nonumber
\lambda^2(t_1,t_2,\cdots,t_k,z)&=&A_{21}(t_k,z)\lambda^1(t_1,t_2,\cdots,t_{k-1},z)+A_{22}(t_k,z)\lambda^2(t_1,t_2,\cdots,t_{k-1},z)
\\&& \label{5-4}+A_{23}(t_k,z)\E^1(t_1,t_2,\cdots,t_{k-1},z)+A_{24}(t_k,z)\E^2(t_1,t_2,\cdots,t_{k-1},z),
\end{eqnarray}
where
\begin{eqnarray*}
A_{11}(t_k,z)=\frac{16z^2t_k}{1-2t_k-4zt_k-16z^2t_k^2}, && \quad A_{12}(t_k,z)= 0,
\\ A_{13}(t_k,z)= \frac{t_k(1+2z)+8z^2t_k^2}{1-2t_k-4zt_k-16z^2t_k^2}, && \quad A_{14}(t_k,z)=0,
\\ A_{21}(t_k,z)=A_{11}(t_k,z)-16z^2t_k,   \,\,\,\,\,\,\,\,\,\,\,\,\,
 && \quad A_{22}(t_k,z)= 0,
\\ A_{23}(t_k,z)= A_{13}(t_k,z)-t_k(1+2z),\,
 && \quad A_{24}(t_k,z)=0.
\end{eqnarray*}

\end{lemma}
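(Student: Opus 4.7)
The plan is to derive both identities from the combination of Lemmas~\ref{12-1}, \ref{12-2}, and \ref{12-3} via straightforward substitution. Notice first that the stated coefficients satisfy $A_{12}=A_{14}=A_{22}=A_{24}=0$, so the effective claim is that $\lambda^1(t_1,\ldots,t_k,z)$ and $\lambda^2(t_1,\ldots,t_k,z)$ are each expressible as a linear combination of just $\lambda^1(t_1,\ldots,t_{k-1},z)$ and $\E^1(t_1,\ldots,t_{k-1},z)$, with the indicated rational coefficients in $t_k$ and $z$. In other words, the quantities $\lambda^2$ and $\E^2$ at the previous step never actually appear, and the real content is the extraction of $\lambda^1(t_1,\ldots,t_k,z)$ from the system already implied by Lemmas~\ref{12-1}--\ref{12-3}.

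First I would rewrite Lemma~\ref{12-1} as $\lambda^0(t_1,\ldots,t_k,z)=\lambda^1(t_1,\ldots,t_k,z)+\tfrac12 \E^1(t_1,\ldots,t_{k-1},z)$ and substitute this into Lemma~\ref{12-3}. This produces one expression for $\lambda^2(t_1,\ldots,t_k,z)$ entirely in terms of $\lambda^1(t_1,\ldots,t_k,z)$ and $\E^1(t_1,\ldots,t_{k-1},z)$, namely $[(2+4z)t_k+16z^2t_k^2]\,\lambda^1(t_1,\ldots,t_k,z)+8z^2t_k^2\,\E^1(t_1,\ldots,t_{k-1},z)$. Equating this with the expression for $\lambda^2(t_1,\ldots,t_k,z)$ coming from Lemma~\ref{12-2} cancels $\lambda^2$ from both sides and leaves
\begin{equation*}
[1-2t_k-4zt_k-16z^2t_k^2]\,\lambda^1(t_1,\ldots,t_k,z)=16z^2t_k\,\lambda^1(t_1,\ldots,t_{k-1},z)+[t_k(1+2z)+8z^2t_k^2]\,\E^1(t_1,\ldots,t_{k-1},z).
\end{equation*}
Dividing through by the bracketed factor on the left, which is invertible as a formal power series since its constant term equals $1$, gives exactly (\ref{5-3}) with the asserted $A_{11}$ and $A_{13}$, and with $A_{12}=A_{14}=0$ by construction.

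To obtain (\ref{5-4}), I would then substitute the just-derived expression for $\lambda^1(t_1,\ldots,t_k,z)$ into the right-hand side of Lemma~\ref{12-2}, and collect the coefficients of $\lambda^1(t_1,\ldots,t_{k-1},z)$ and $\E^1(t_1,\ldots,t_{k-1},z)$. The coefficient of $\lambda^1(t_1,\ldots,t_{k-1},z)$ becomes $A_{11}-16z^2t_k$ and that of $\E^1(t_1,\ldots,t_{k-1},z)$ becomes $A_{13}-t_k(1+2z)$, which match $A_{21}$ and $A_{23}$ exactly, with $A_{22}=A_{24}=0$. Since every step is a linear substitution, there is no genuine obstacle; the only bookkeeping to double check is the arithmetic in the cancellation step, in particular that the contribution $\tfrac12$ from Lemma~\ref{12-1} combines with the $16z^2t_k^2$ from Lemma~\ref{12-3} to give precisely $8z^2t_k^2$, and that the left-hand factor $1-2t_k-4zt_k-16z^2t_k^2$ indeed matches the common denominator prescribed for $A_{11}$ and $A_{13}$.
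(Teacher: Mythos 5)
Your proposal is correct and follows essentially the same route as the paper: combine Lemmas \ref{12-1} and \ref{12-3} to express $\lambda^2(t_1,\ldots,t_k,z)$ as $[(2+4z)t_k+16z^2t_k^2]\lambda^1(t_1,\ldots,t_k,z)+8z^2t_k^2\E^1(t_1,\ldots,t_{k-1},z)$, equate with Lemma \ref{12-2} and solve for $\lambda^1(t_1,\ldots,t_k,z)$ to get (\ref{5-3}), then feed this back into Lemma \ref{12-2} to get (\ref{5-4}). Your extra remark that the factor $1-2t_k-4zt_k-16z^2t_k^2$ is invertible as a formal power series is a small but welcome justification the paper leaves implicit.
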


\begin{proof}
By Lemmas  \ref{12-1}, \ref{12-3}, one sees that
\begin{eqnarray*}
\lambda^2(t_1,t_2,\cdots,t_k,z)&=& (2t_k+4zt_k+16z^2t_k^2)\lambda^1(t_1,t_2,\cdots,t_k,z)+8z^2t_k^2\E^1(t_1,t_2,\cdots,t_{k-1},z).
\end{eqnarray*}
Combining the above equality with Lemma  \ref{12-2}, we get
\begin{eqnarray*}
&&\lambda^1(t_1,\cdots, t_k,z)\\&=& \frac{t_k(1+2z)\E^1(t_1,t_2,\cdots,t_{k-1},z)+16z^2t_k\lambda^1(t_1,t_2,\cdots,t_{k-1},z)+8z^2t_k^2\E^1(t_1,t_2,\cdots,t_{k-1},z)}{1-2t_k-4zt_k-16z^2t_k^2},
\end{eqnarray*}
which completes the proof of (\ref{5-3}). Using Lemma  \ref{12-2}   again, we get   (\ref{5-4}).
\end{proof}

\begin{lemma}
For $k\geq 3,$ we have
\begin{eqnarray}
\nonumber \E^1(t_1,t_2,\cdots,t_k,z)&=&A_{31}(t_k,z)\lambda^1(t_1,t_2,\cdots,t_{k-1},z)+A_{32}(t_k,z)\lambda^2(t_1,t_2,\cdots,t_{k-1},z)
\\ \label{6-1}&&+A_{33}(t_k,z)\E^1(t_1,t_2,\cdots,t_{k-1},z)+A_{34}(t_k,z)\E^2(t_1,t_2,\cdots,t_{k-1},z),
\end{eqnarray}
and
\begin{eqnarray}
\nonumber \E^2(t_1,t_2,\cdots,t_k,z)&=&A_{41}(t_k,z)\lambda^1(t_1,t_2,\cdots,t_{k-1},z)+A_{42}(t_k,z)\lambda^2(t_1,t_2,\cdots,t_{k-1},z)
\\ \label{6-2}&&+A_{43}(t_k,z)\E^1(t_1,t_2,\cdots,t_{k-1},z)+A_{44}(t_k,z)\E^2(t_1,t_2,\cdots,t_{k-1},z),
\end{eqnarray}
where
\begin{eqnarray*}
A_{31}(t_k,z)=\frac{16z^2t_kA_{11}(t_k,z)}{1-t_k-4zt_k-16z^2t_k^2},
&& \quad A_{32}(t_{k-1}, t_k,z)= \frac{16z^2t_{k-1}^{-1}t_k}{1-t_k-4zt_k-16z^2t_k^2},
\\ A_{33}(t_k,z)=  \frac{16z^2t_k+16z^2t_kA_{13}(t_k,z)}{1-t_k-4zt_k-16z^2t_k^2},
&& \quad A_{34}(t_{k-1}, t_k,z)=\frac{(1+4z)t_{k-1}^{-1}t_k+16z^2t_{k-1}^{-1}t_k^2}{1-t_k-4zt_k-16z^2t_k^2},
\\ A_{41}(t_k,z)=A_{31}(t_k,z),
&& \quad A_{42}(t_{k-1}, t_k,z)=A_{32}(t_{k-1}, t_k,z)-16z^2t_{k-1}^{-1}t_k,
\\ A_{43}(t_k,z)=A_{33}(t_k,z)-16z^2t_k,
&& \quad A_{44} (t_{k-1}, t_k,z)=A_{34}(t_{k-1}, t_k,z)-(1+4z)t_{k-1}^{-1}t_k.
\end{eqnarray*}
\end{lemma}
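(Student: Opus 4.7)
The plan is to derive both recurrences by pure generating-function algebra, combining the four structural identities Lemmas \ref{pz-1}--\ref{pz-4} with the already-proved recursion (\ref{5-3}) for $\lambda^1$. The common denominator $1-t_k-4zt_k-16z^2t_k^2$ in every $A_{3j}$ and $A_{4j}$ signals the main strategy: set up two independent linear relations between $\E^1(t_1,\ldots,t_k,z)$ and $\E^2(t_1,\ldots,t_k,z)$, and solve the $2\times 2$ system.

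First I would set up those two relations. Substituting $\E^0 = \E^1 + t_{k-1}^{-1}\E^2(t_1,\ldots,t_{k-1},z)$ from Lemma \ref{pz-1} into Lemma \ref{pz-2} gives
\[
\E^2(t_1,\ldots,t_k,z) = \bigl[(1+4z)t_k+16z^2t_k^2\bigr]\E^1(t_1,\ldots,t_k,z) + 16z^2 t_{k-1}^{-1}t_k^2\,\E^2(t_1,\ldots,t_{k-1},z) + 16z^2 t_k\,\lambda^1(t_1,\ldots,t_k,z),
\]
while Lemma \ref{pz-4} provides $\E^1(t_1,\ldots,t_k,z)-\E^2(t_1,\ldots,t_k,z) = t_k t_{k-1}^{-2}\E^3(t_1,\ldots,t_{k-1},z)$. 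Subtracting these isolates $\E^1(t_1,\ldots,t_k,z)$ with coefficient exactly $1-t_k-4zt_k-16z^2t_k^2$, explaining the denominator of every $A_{3j}$.

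Next I would eliminate the two quantities $\E^3(t_1,\ldots,t_{k-1},z)$ and $\lambda^1(t_1,\ldots,t_k,z)$ that are not among the four targets. Lemma \ref{pz-3} applied with $k$ replaced by $k-1$ expands
\[
t_k t_{k-1}^{-2}\E^3(t_1,\ldots,t_{k-1},z) = (1+4z)t_{k-1}^{-1}t_k\,\E^2(t_1,\ldots,t_{k-1},z) + 16z^2 t_k\,\E^1(t_1,\ldots,t_{k-1},z) + 16z^2 t_{k-1}^{-1}t_k\,\lambda^2(t_1,\ldots,t_{k-1},z),
\]
and the previous lemma rewrites $\lambda^1(t_1,\ldots,t_k,z) = A_{11}\lambda^1(t_1,\ldots,t_{k-1},z) + A_{13}\E^1(t_1,\ldots,t_{k-1},z)$. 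Collecting the coefficients of $\lambda^1,\lambda^2,\E^1,\E^2$ at level $k-1$ yields (\ref{6-1}) with the stated $A_{3j}$; in particular, the $\E^1$-coefficient receives contributions $16z^2 t_k$ (from the $\E^3$ expansion) and $16z^2 t_k A_{13}$ (from the $\lambda^1$ expansion), assembling into $A_{33}=\frac{16z^2 t_k(1+A_{13})}{1-t_k-4zt_k-16z^2t_k^2}$.

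Finally, to obtain (\ref{6-2}), I substitute the just-derived formula (\ref{6-1}) for $\E^1(t_1,\ldots,t_k,z)$ back into Lemma \ref{pz-4} written as $\E^2(t_1,\ldots,t_k,z) = \E^1(t_1,\ldots,t_k,z) - t_k t_{k-1}^{-2}\E^3(t_1,\ldots,t_{k-1},z)$, and expand $\E^3$ via Lemma \ref{pz-3} as above. The net effect is a subtractive correction on each coefficient, producing exactly $A_{41}=A_{31}$, $A_{42}=A_{32}-16z^2 t_{k-1}^{-1}t_k$, $A_{43}=A_{33}-16z^2 t_k$, $A_{44}=A_{34}-(1+4z)t_{k-1}^{-1}t_k$, as claimed. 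The main obstacle is purely bookkeeping: one must track several substitutions involving the nontrivial rational functions $A_{11}$ and $A_{13}$ from the previous lemma, and verify that the coefficient of $\E^1(t_1,\ldots,t_k,z)$ collapses cleanly to $1-t_k-4zt_k-16z^2t_k^2$ so that the final $A_{3j},A_{4j}$ take the compact closed form stated.
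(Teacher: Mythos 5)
Your proposal is correct and follows essentially the same route as the paper: it combines Lemma \ref{pz-2} with Lemma \ref{pz-1} to get one relation, uses Lemma \ref{pz-4} with $\E^3$ expanded via Lemma \ref{pz-3} (the paper's intermediate identity (\ref{50-2})) as the second, solves for $\E^1(t_1,\ldots,t_k,z)$ with the denominator $1-t_k-4zt_k-16z^2t_k^2$, and then expands $\lambda^1(t_1,\ldots,t_k,z)$ by (\ref{5-3}) to read off the $A_{3j}$, obtaining the $A_{4j}$ by back-substitution. The coefficients you collect agree with the stated ones, so no gap.
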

\begin{proof}
With a help of  Lemma \ref{pz-2} and Lemma  \ref{pz-1}, one  sees
\begin{eqnarray}
\label{50-1}
\begin{split}
\E^2(t_1,t_2,\cdots,t_k,z)&=16z^2t_k^2 t_{k-1}^{-1}\E^2(t_1,t_2,\cdots,t_{k-1},z)
\\&\quad +(t_k+4zt_k+16z^2t_k^2)\E^1(t_1,t_2,\cdots,t_k,z)+16z^2t_k\lambda^1(t_1,t_2,\cdots,t_k,z).
\end{split}
\end{eqnarray}
Using Lemmas \ref{pz-3}, \ref{pz-4}, we obtain
\begin{eqnarray}
\label{50-2}
\begin{split}
\E^2(t_1,t_2,\cdots,t_k,z)&=-(1+4z)t_{k-1}^{-1}t_k\E^2(t_1,t_2,\cdots,t_{k-1},z)+\E^1(t_1,t_2,\cdots,t_k,z)
\\ &-16z^2t_k\E^1(t_1,t_2,\cdots,t_{k-1},z)-16z^2t_{k-1}^{-1}t_k\lambda^2(t_1,t_2,\cdots,t_{k-1},z).
\end{split}
\end{eqnarray}
Combining (\ref{50-1}) with (\ref{5-3}) (\ref{50-2}),  one arrives at that
\begin{eqnarray*}
&& \E^1(t_1,t_2,\cdots,t_k,z)
\\ &&=\frac{1}{1-t_k-4zt_k-16z^2t_k^2}\Big\{
\big[
(1+4z)t_{k-1}^{-1}t_k+16z^2t_{k-1}^{-1}t_k^2\big]\E^2(t_1,t_2,\cdots,t_{k-1},z)
\\ &&\quad \quad+16z^2t_k\lambda^1(t_1,t_2,\cdots,t_k,z)
+16z^2t_k\E^1(t_1,t_2,\cdots,t_{k-1},z)
+16z^2t_{k-1}^{-1}t_k\lambda^2(t_1,t_2,\cdots,t_{k-1},z)\Big\},
\end{eqnarray*}
which finishes the proof of (\ref{6-1}).
Combining   (\ref{6-1}) with (\ref{50-2}), one arrives at (\ref{6-2}).
\end{proof}

Now, we state our  main results  in this section
\begin{theorem}
\label{a-1}
One has
\begin{eqnarray*}
\begin{bmatrix}
\lambda^1(t_1,t_2,\cdots,t_k,z)
\\
\lambda^2(t_1,t_2,\cdots,t_k,z)
\\
\E^1(t_1,t_2,\cdots,t_k,z)
\\
\E^2(t_1,t_2,\cdots,t_k,z)
\end{bmatrix}
=A(t_{k-1},t_k,z)
\begin{bmatrix}
\lambda^1(t_1,t_2,\cdots,t_{k-1},z)
\\
\lambda^2(t_1,t_2,\cdots,t_{k-1},z)
\\
\E^1(t_1,t_2,\cdots,t_{k-1},z)
\\
\E^2(t_1,t_2,\cdots,t_{k-1},z)
\end{bmatrix},
\end{eqnarray*}
where  $A(t_{k-1},t_k,z)$ is a matrix given by
$$
\begin{bmatrix}
A_{11}(t_k,z) &A_{12}(t_k,z)&A_{13}(t_k,z)&A_{14}(t_k,z)
\\ A_{21}(t_k,z)&A_{22}(t_k,z)&A_{23}(t_k,z)&A_{24}(t_k,z)
\\ A_{31}(t_k,z)&A_{32}(t_{k-1}, t_k,z)&A_{33}(t_k,z)&A_{34}(t_{k-1}, t_k,z)
\\ A_{41}(t_k,z)&A_{42}(t_{k-1}, t_k,z)&A_{43}(t_k,z)&A_{44}(t_{k-1}, t_k,z)
\end{bmatrix}.
$$
And furthermore,  we have
\begin{eqnarray*}
\begin{bmatrix}
\lambda^1(t_1,t_2,\cdots,t_k,z)
\\
\lambda^2(t_1,t_2,\cdots,t_k,z)
\\
\E^1(t_1,t_2,\cdots,t_k,z)
\\
\E^2(t_1,t_2,\cdots,t_k,z)
\end{bmatrix}
=A(t_{k-1},t_k,z)\cdot \cdots  \cdot A(t_{2},t_3,z)  \cdot
\begin{bmatrix}
\lambda^1(t_1,t_{2},z)
\\
\lambda^2(t_1,t_{2},z)
\\
\E^1(t_1,t_{2},z)
\\
\E^2(t_1,,t_{2},z)
\end{bmatrix},
\end{eqnarray*}
where the values of $\lambda^i(t_1,t_{2},z),\E^i(t_1,t_{2},z)$ are given in subsection 4.2.
\end{theorem}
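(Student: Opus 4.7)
The plan is to observe that Theorem \ref{a-1} is essentially a matrix repackaging of the four scalar identities already established immediately above, namely equations (\ref{5-3}), (\ref{5-4}), (\ref{6-1}) and (\ref{6-2}). First I would write out, side by side, the right-hand sides of those four equations and check that the coefficient appearing in row $i$, column $j$ is exactly the function $A_{ij}(t_k,z)$ or $A_{ij}(t_{k-1},t_k,z)$ declared in the statement. This is entirely mechanical: the first row is (\ref{5-3}), the second row is (\ref{5-4}), the third row is (\ref{6-1}), and the fourth row is (\ref{6-2}); reading off the four coefficients in each row gives the corresponding row of the stated matrix $A(t_{k-1},t_k,z)$. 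The only care needed is to check that the entries $A_{12},A_{14},A_{22},A_{24}$ are indeed zero, which follows from the fact that (\ref{5-3}) and (\ref{5-4}) do not contain the terms $\lambda^2(t_1,\ldots,t_{k-1},z)$ or $\E^2(t_1,\ldots,t_{k-1},z)$ on the right-hand side.

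Once the single-step matrix recurrence is established for all $k\geq 3$, the iterated form follows by an immediate induction on $k$. Explicitly, I would apply the one-step recurrence $k-2$ times:
\begin{align*}
\begin{bmatrix}
\lambda^1(t_1,\ldots,t_k,z)\\
\lambda^2(t_1,\ldots,t_k,z)\\
\E^1(t_1,\ldots,t_k,z)\\
\E^2(t_1,\ldots,t_k,z)
\end{bmatrix}
&=A(t_{k-1},t_k,z)\begin{bmatrix}
\lambda^1(t_1,\ldots,t_{k-1},z)\\
\lambda^2(t_1,\ldots,t_{k-1},z)\\
\E^1(t_1,\ldots,t_{k-1},z)\\
\E^2(t_1,\ldots,t_{k-1},z)
\end{bmatrix}\\
&=A(t_{k-1},t_k,z)A(t_{k-2},t_{k-1},z)\begin{bmatrix}
\lambda^1(t_1,\ldots,t_{k-2},z)\\
\lambda^2(t_1,\ldots,t_{k-2},z)\\
\E^1(t_1,\ldots,t_{k-2},z)\\
\E^2(t_1,\ldots,t_{k-2},z)
\end{bmatrix}
\end{align*}
and continue until the base vector is the one at index $k=2$, whose four entries $\lambda^1(t_1,t_2,z),\lambda^2(t_1,t_2,z),\E^1(t_1,t_2,z),\E^2(t_1,t_2,z)$ have been obtained explicitly in subsection \ref{10-4} (via Lemma \ref{lema:add} and the subsequent closed-form computation) in terms of the auxiliary series $\phi(t_1,z)$ from Proposition \ref{2-13} and $L^*(t_1,z)$ from Lemma \ref{pz-10}.

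There is no genuine obstacle: the real content has already been packed into the two lemmas preceding the theorem, and those in turn rest on Lemmas \ref{12-1}--\ref{pz-4}, which express each of $\lambda^1,\lambda^2,\E^1,\E^2$ at level $k$ in terms of itself and the level $k-1$ data. The only book-keeping point that warrants a line of justification is that the recurrence is valid uniformly for every $k\geq 3$ (matching the hypothesis in all of Lemmas \ref{12-1}--\ref{pz-4}), so that the iteration terminates at $k=2$ and not earlier. I would conclude by remarking that combining the explicit base case from subsection \ref{10-4} with the product $A(t_{k-1},t_k,z)\cdots A(t_2,t_3,z)$ yields a closed expression for all four generating functions and hence, by Theorem \ref{aa-1}, for the Euler-genus polynomial $\E_{m_1,\ldots,m_k}(z)$ of the cubic caterpillar-Halin graph $H_{m_1,\ldots,m_k}$.
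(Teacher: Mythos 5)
Your proposal is correct and matches the paper's treatment: the theorem is indeed just the matrix repackaging of the four scalar recurrences (\ref{5-3}), (\ref{5-4}), (\ref{6-1}), (\ref{6-2}) established in the two preceding lemmas, with the vanishing entries $A_{12},A_{14},A_{22},A_{24}$ reflecting the absence of $\lambda^2$ and $\E^2$ terms at level $k-1$ in the first two recurrences, followed by iterating down to the explicit base case at $k=2$ from subsection \ref{10-4}. The paper offers no separate argument beyond this, so your reasoning is essentially the same as the intended proof.
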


For any $k$, by Theorem  \ref{a-1},  one  can derive the expression of  $\E^1(t_1,t_2,\cdots,t_k,z).$
Since
\begin{eqnarray*}
  \E_{m_1,\cdots,m_k}(z)
  =\frac{1}{m_1!\cdots m_k!}\left.\frac{\partial^{m_1+\cdots+m_k}\E^1(t_1,t_2,\cdots,t_k,z) }
  {\partial_{t_1}^{m_1}\partial_{t_2}^{m_2}\cdots\partial_{t_k}^{m_k}}\right|_{t_1=0,\cdots,t_k=0},
\end{eqnarray*}
 we can obtain  the expression of $\E_{{m_1,\cdots,m_k}}(z)$ for any fixed $ m_1\geq1,\cdots,m_k\geq 1$.

\section{Examples}
\label{8-1}
For any $m_1\geq 1,\cdots,m_k\geq 1,$ by the results in Section 4,  one  gets
  the expression of $\E_{{m_1,\cdots,m_k}}(z)$. We give  two examples  to demonstrate this.
\begin{example}
\label{1-22}
  Let $k=2$.  We  have
  \begin{eqnarray*}
 \E_{{1,1}}(z)&=&2( 1 + 11 z + 80 z^2 + 212 z^3 + 208 z^4),
 \\
  \E_{{1,2}}(z)&=&2(1 + 15 z + 156 z^2 + 724 z^3 + 1728 z^4 + 1472 z^5),
   \\
   \E_{{2,2}}(z)&=&2(1 + 19 z + 248 z^2 + 1668 z^3 + 6704 z^4 + 13504 z^5 + 10624 z^6),
   \\
  \E_{{2,4}}(z)&=& 2(1 + 27 z + 544 z^2 + 5876 z^3 + 41424 z^4 + 185472 z^5 + 520320 z^6
  \\ &&  +
 813056 z^7 + 530432 z^8).
  \end{eqnarray*}
\end{example}

\begin{example}
  Let $k=3$. We   have
  \begin{eqnarray*}
 \E_{{1,1,1}}(z)&=&2( 1 + 19 z + 264 z^2 + 1748 z^3 + 6800 z^4 + 13440 z^5 + 10496 z^6),
 \\
  \E_{{1,2,3}}(z)&=&2(1 + 31 z + 700 z^2 + 9028 z^3 + 79840 z^4 + 465280 z^5 +
 1800832 z^6
 \\ &&+ 4425216 z^7+ 6236160 z^8 + 3760128 z^9),
 \\
 \E_{{2,2,2}}(z)&=&2( 1 + 31 z + 684 z^2 + 8756 z^3 + 75968 z^4 + 443456 z^5 +
 1750528 z^6
 \\ && + 4397056 z^7+ 6287360 z^8 + 3813376 z^9).
  \end{eqnarray*}
\end{example}


\noindent$\bf{Acknowledgement}$
We   greatly appreciate professor  Yichao  Chen's    useful discussion.



\end{document}